\numberwithin{equation}{section}
\newtheoremstyle{fancy1}{10pt}{10pt}{\itshape}{12pt}{\textsc\bgroup}{.\egroup}{8pt}{
}
\newtheoremstyle{fancy2}{10pt}{10pt}{}{12pt}{\itshape}{.}{8pt}{ }
\theoremstyle{fancy1}
\newtheorem{cor}[equation]{Corollary}
\newtheorem{lem}[equation]{Lemma}
\newtheorem{prop}[equation]{Proposition}
\newtheorem{thm}[equation]{Theorem}
\newtheorem{problem}[equation]{Problem}
\newtheorem{main}{Theorem}
\newtheorem*{main*}{Theorem}
\newtheorem*{conjecture}{Conjecture}
\newtheorem*{cor*}{Corollary}
\theoremstyle{fancy2}
\newtheorem{rem}[equation]{Remark}
\newtheorem*{rem*}{Remark}
\newtheorem{example}[equation]{Example}
\newcommand{\cref}[1]{Corollary~\ref{#1}}
\newcommand{\Sph}{\mathbb{S}}
\newcommand{\Disc}{\mathbb{D}}
\newcommand{\W}{\mathsf{W}}
\newcommand{\N}{\mathsf{N}}
\newcommand{\RP}{\mathbb{R\mkern1mu P}}
\newcommand{\C}{{\mathbb{C}}}
\newcommand{\R}{{\mathbb{R}}}
\newcommand{\Z}{{\mathbb{Z}}}
\newcommand{\E}{\ensuremath{\operatorname{\mathsf{E}}}}
\newcommand{\F}{\ensuremath{\operatorname{\mathsf{F}}}}
\newcommand{\G}{\ensuremath{\operatorname{\mathsf{G}}}}
\newcommand{\GL}{\ensuremath{\operatorname{\mathsf{GL}}}}
\newcommand{\D}{\ensuremath{\operatorname{\mathsf{D}}}}
\renewcommand{\O}{\ensuremath{\operatorname{\mathsf{O}}}}
\renewcommand{\S}{\ensuremath{\operatorname{\mathsf{S}}}}
\newcommand{\A}{\ensuremath{\operatorname{\mathsf{A}}}}
\newcommand{\CC}{\ensuremath{\operatorname{\mathsf{C}}}}
\newcommand{\B}{\ensuremath{\operatorname{\mathsf{B}}}}
\renewcommand{\r}{\ensuremath{\operatorname{\mathsf{r}}}}
\newcommand{\w}{\ensuremath{\operatorname{\mathsf{w}}}}
\def\con#1=#2(#3){#1 \equiv #2 \bmod{#3}}
\newcommand{\cl}{\ensuremath{\operatorname{cl}}}
\newcommand{\no}{\noindent}
\renewcommand{\F}{\mathsf{F}}
\begin{document}

\title{Reflection groups in non-negative curvature}

\author{Fuquan Fang}
\address{Department of Mathematics\\Capital Normal University\\
Beijing 100048\\China } \email{fuquan\_fang@yahoo.com }


\author{Karsten Grove}
\address{Department of Mathematics\\
University of Notre Dame\\
      Notre Dame, IN 46556\\USA
      }
\email{kgrove2@nd.edu }

\thanks{The first named author is supported in part by an NSFC key grant and University of Notre Dame.
The second named author is supported in part by the NSF and a Humboldt award.  He also wants to thank the MPI and the University of Bonn for hospitality}

\maketitle



\begin{abstract} We provide an equivariant description / classification of all complete (compact or not) non-negatively curved manifolds $M$ together with a co-compact action by a reflection group $\W$, and moreover, classify such $\W$. In particular, we show that the building blocks consist of the classical constant curvature models and generalized open books with non negatively curved bundle pages, and derive a corresponding splitting theorem for the universal cover.
\end{abstract}

\bigskip


The theory of discrete groups of motions
generated by reflections has a long history (cf., e.g., \cite{Bo}) going back to the study of planar regular
polygons and space polyhedra.  It's impact on the modern development of Lie theory, and symmetric spaces going back to E. Cartan and W. Killing is well known.

Much of the work on reflection groups has been focussed on constant curvature spaces. Here, the euclidean and spherical cases are well understood ultimately due to the works of H. S. M. Coxeter \cite{Co}.
 In the hyperbolic case the situation is very different.  A complete classification of reflection groups in the hyperbolic plane was achieved by Poincar\'e \cite{Po1} (cf. also von Dyck \cite{Dy}), and in the hyperbolic $3$-space  by Andreev \cite{An}
 , whereas hyperbolic reflection groups in higher dimensions are very rich and far from being classified. A surprising theorem of Vinberg \cite{Vi} asserts there are no co-compact hyperbolic reflection group in dimensions $\ge 30$.

\smallskip

Here we deal with general Riemannian manifolds  with variable, but non-negative (sectional) curvature equipped with a co-compact proper action by a discrete reflection group. Our results provide an essentially complete understanding of these objects.


\smallskip

In contrast to the classical framework discussed above, but motivated by applications to polar actions (like the one in \cite{FGT}), a \emph{reflection} is nothing but an isometric involution whose fixed point set has a component of codimension one, called a \emph{mirror}. Most subtleties caused by this generality evaporate when passing to a canonical finite cover (see Proposition \ref{coxeter}).

 The following simple example is at the core of our work: Consider a reflection $\r: M \to M$ whose mirror $\Lambda$ separates $M$. From the Cheeger-Gromoll soul construction it follows that $M$ is the double of a disc bundle $\Disc(\nu)$. Note that this double can also be described as the sphere bundle $\Sph(\nu \oplus \varepsilon)$ ($\varepsilon$ is the trivial line bundle), as well as an \emph{open book} with two \emph{pages} $\Disc(\nu)$, i.e., parametrized by $\Sph^{0}$, having common boundary, the \emph{binding} $\Lambda$.

 It turns out that a natural generalization of the above \emph{open book} type of manifold, together with the \emph{classical space forms} constitute the \emph{building blocks} needed in general. To explain the appearance of building blocks, we say that the action $\W\times M \to M$ is \emph{decomposable} if the orbitspace $M/\W$ metrically is a finite quotient of a product, and \emph{indecomposable} otherwise. With this terminology one of our main results is the following \emph{Rigidity Theorem}

 \begin{main}
 A nonnegatively curved manifold $M^n$ with an indecomposable cocompact action by a reflection group $\W$ is isometric to  either $\Bbb R^n$, or $\Bbb T^n$, or equivariantly diffeomorphic to either $\Sph^n$, or $\Bbb {RP}^n$ with a linear action, unless all mirrors in $M$ meet.
 \end{main}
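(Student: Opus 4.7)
By Proposition~\ref{coxeter} we may pass to the canonical finite equivariant cover on which the lifted group is a genuine Coxeter group, so we assume $\W$ itself is a Coxeter group. Let $p:\tilde M\to M$ be the universal cover and $\tilde\W$ the lifted reflection group, fitting into $1\to\pi_1(M)\to\tilde\W\to\W\to 1$. Apply the Cheeger--Gromoll splitting (iteratively, absorbing all Euclidean factors) to the simply connected non-negatively curved $\tilde M$; in the presence of the cocompact discrete action by $\tilde\W$, the residual factor has compact soul and no line, hence is compact, so we obtain an isometric decomposition $\tilde M = \Bbb R^k\times N$ with $N$ compact simply connected non-negatively curved. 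Because isometries of a de Rham product split as products of factor isometries, and a reflection has fixed set of codimension exactly one, each reflection in $\tilde\W$ must act as a reflection in precisely one factor; consequently $\tilde\W=\tilde\W_1\times\tilde\W_2$ and
\[
M/\W \;=\; (\Bbb R^k/\tilde\W_1)\times (N/\tilde\W_2).
\]
Indecomposability of $M/\W$ forces one factor to collapse to a point. Since a discrete group cannot act transitively on a positive-dimensional complete non-negatively curved space, the collapse occurs only when $k=0$ (the \emph{compact case}, $\tilde M=N$) or $N$ is a point (the \emph{flat case}, $\tilde M=\Bbb R^n$).

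In the flat case, cocompactness of $\tilde\W$ on $\Bbb R^n$ forces $\tilde\W$ to be an affine Coxeter group; standardly $\tilde\W=T\rtimes W_0$ with $T$ its translation lattice and $W_0$ its finite Weyl quotient. The deck group $\Gamma\triangleleft\tilde\W$ is torsion-free, so its image in the finite group $W_0$ is trivial, i.e.\ $\Gamma\subseteq T$. Thus $\Gamma$ is itself a (possibly trivial) lattice, and $M=\Bbb R^n/\Gamma$ is isometric to either $\Bbb R^n$ (if $\Gamma=\{1\}$) or a flat torus $\Bbb T^n$ (if $\Gamma$ has full rank).

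In the compact case, $\tilde M$ is a compact simply connected non-negatively curved manifold with indecomposable orbit space $\tilde M/\tilde\W$, and the mirror-non-convergence hypothesis lifts to $\tilde M$ (a common point of all lifted mirrors would project to a common point of all mirrors of $M$). Pick a mirror $\Lambda\subset \tilde M$; by the open-book construction recalled in the introduction, $\tilde M$ is the double of a normal disc bundle $\Disc(\nu)\to S$ over a soul $S$, and $\Lambda=\Sph(\nu)$. The stabilizer of the corresponding face of the Coxeter polytope acts on $\Lambda$ as a cocompact indecomposable reflection group inheriting the hypothesis that not all its mirrors meet; by induction on dimension, $\Lambda$ is equivariantly diffeomorphic to a linearly acted-on $\Sph^{n-1}$ or $\Bbb{RP}^{n-1}$. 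The hypothesis then excludes the generalized-open-book building blocks with non-trivial base (which are precisely the indecomposable non-negatively curved alternatives where all mirrors meet at the binding), forcing $S$ to be a point and $\nu$ the trivial rank-$n$ bundle. Hence $\tilde M$ is equivariantly diffeomorphic to $\Sph^n=\Sph(\Bbb R^{n+1})$ with a linear reflection action, and $M=\tilde M/\pi_1(M)$ is either $\Sph^n$ or $\Bbb{RP}^n$ by the classification of spherical space forms compatible with the Coxeter structure.

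The main obstacle is the compact case, specifically propagating the mirror-non-convergence hypothesis to the inductive face $\Lambda$ and using it to rule out the generalized open-book pages with non-trivial base that would otherwise furnish indecomposable non-negatively curved examples beyond the four models. Once the de Rham reduction is in hand, the flat case follows from the standard identification of torsion-free normal subgroups of an affine Coxeter group with sublattices of its translation subgroup.
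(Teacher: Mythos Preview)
Your reduction to the dichotomy ``$\tilde M=\Bbb R^n$'' versus ``$\tilde M$ compact'' via Cheeger--Gromoll is in the right spirit, and the flat branch is close to the paper's argument in Section~4. Two small points there: the implication ``$\Gamma$ torsion-free $\Rightarrow$ image in $W_0$ trivial'' is false for \emph{elements} (glide reflections have infinite order and nontrivial linear part); what is true, and what the paper uses, is that a torsion-free \emph{normal} subgroup of an \emph{irreducible} affine Coxeter group lies in the translation lattice, which needs the classification of normal subgroups of such groups. You also do not exclude $\Gamma$ of intermediate rank; this requires the strong splitting theorem applied to $M$ itself (Theorem~\ref{splitting}) together with indecomposability.

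The genuine gap is the compact branch. Your inductive scheme through a single mirror does not go through:
\begin{itemize}
\item The open-book picture from the introduction pertains to \emph{one} reflection $r$ with mirror $\Lambda$; it says nothing about how the remaining generators of $\hat\W$ interact with the soul $S$ or the bundle $\nu$. You assert that ``the stabilizer of the corresponding face acts on $\Lambda$ as a cocompact indecomposable reflection group inheriting the hypothesis that not all its mirrors meet,'' but none of these three properties is established, and none is automatic.
\item Even if you knew $\Lambda\cong\Sph^{n-1}$, that does \emph{not} force the soul $S$ to be a point: the Hopf fibration gives $\Sph^3=\Sph(\nu)$ for a rank-$2$ bundle $\nu$ over $S=\Sph^2$. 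So the step ``forcing $S$ to be a point and $\nu$ the trivial rank-$n$ bundle'' has no argument behind it.
\item Your appeal to ``the generalized open-book building blocks with non-trivial base'' conflates the single-reflection double (one mirror) with the Theorem~B situation where \emph{all} mirrors of the full group meet at a binding. These are different structures; invoking the latter here is circular.
\end{itemize}

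The paper takes an entirely different route in the compact case and avoids induction on dimension. It works directly with the chamber $C=M/\W$: W\"orner's splitting theorem plus indecomposability plus ``not all faces meet'' forces $C$ into the maximal indecomposable regime (more than $n$ faces, any $n$ of which intersect). The crucial Simplex Lemma~\ref{simplex} then shows, via a soul/convexity argument internal to $C$, that $C$ is an $n$-simplex. With $C$ a simplex, Theorem~\ref{ssf} builds a $\W$-invariant constant-curvature metric by matching the tangent cones along all strata to those of the linear model on $\Sph^n$, yielding $\Sph^n$ or $\RP^n$. The whole face combinatorics of $C$ is used at once; there is no mirror-by-mirror descent.
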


Here the spherical case relies on showing that the orbitspace is a simplex (cf. section 2), whereas the part where the universal cover of $M$ is non-compact also relies on Cheeger- Gromoll splitting results for cocompact actions and for compact manifolds with infinite fundamental group, as well as on  Bieberbach's celebrated Theorem (cf. section 4). Recall, that by the latter, any compact flat manifold is
finitely covered by a  flat torus, i.e., $M=\Bbb T^n/\G$, where
$\G\subset \O(n)$ is the holonomy. In particular, Theorem A shows that the
holonomy group $\G$ must be trivial when the action is indecomposible. We will prove, moreover, that
if the orbit space splits as a metric product of eucledian
simplices, then $\Bbb T^n/\G$ must be an iterated torus bundle, with
holonomy group $\G$ a very special elementary abelian $2$-group in
$\GL(\Bbb Z, n)$ (see Corollary \ref{holonomy}). The Klein bottle serves as the simplest example.

 \smallskip

 To describe the structure that arises when all mirrors meet consider the following generalizations of the open book with two pages discussed above:

 \smallskip
 {\sc Model Examples}. Let $\rho$ be a linear representation  of a finite Coxeter group $\W$ on $\R^k$, and $\nu$  a smooth vector bundle with base space $S$.  The obvious action by $\W$ on the bundle $\nu \oplus \varepsilon
^{k}$, where $\varepsilon$ is a trivial line bundle, induces an action by $\W$ on the total space of the sphere bundle $M_{\rho,\nu} = \Sph (\nu \oplus \varepsilon ^{k}) =:\Sph(\nu,\rho)$. Note that, this action has $k$ mirrors, whose intersection is $B: = \Sph (\nu) \subset \Sph (\nu \oplus \varepsilon ^{k})$, and ``normal" to $B$ the action is $\rho$. Note also that the equivariant projection $\nu \oplus \varepsilon^{k} \to \R^k$ induces an equivariant map $L: \Sph (\nu \oplus \varepsilon ^{k}) \to \Disc^k$, with $L^{-1} (0) = B$ and $L^{-1}([0,1]x) = P$ diffeomorphic to $\Disc(\nu)$ for any $x \in \partial \Disc^k = \Sph^{k-1}$. For this reason we call $M_{\rho,\nu}$ a $k-1$ dimensional \emph{open book} with binding $B$ and \emph{pages} $P$, parametrized by $\Sph^{k-1}$.

In general, given $\ell$ linear representations $\rho_i$ of finite Coxeter groups $\W_i$ on $\R^{k_i}$, and $\ell$ smooth vector bundles $\nu_i$ with base $S$. The obvious $\W = \W_1\times \ldots\times\W_{\ell}$ action  on the product of the bundles $\nu_i \oplus \varepsilon^{k_i}$ induces a $\W$ action on the \emph{fiber product}, $M_{\bar{\rho}, \bar{\nu}} := \Sph(\bar{\nu},\bar{\rho})$ of the sphere bundles $\Sph(\nu_i \oplus \varepsilon^{k_i})$, i.e., the pull back  by the diagonal map $\Delta: S \to S \times \ldots \times S$ of the product of the sphere bundles $\Sph(\nu_i \oplus \varepsilon^{k_i})$. As in the case of a single representation and bundle, there is a canonically associated $\W$ equivariant map $L : M_{\bar{\rho}, \bar{\nu}} \to \Disc^{k_1}\times \dots \times \Disc^{k_{\ell}}$ where $B = L^{-1}(0, \ldots,0)$ is the intersection of all mirrors for $\W$, and $P = L^{-1}([0,1]x_1, \ldots, [0,1]x_{\ell})$ for any $\bar{x} \in \Sph^{k_1-1} \times \ldots \times \Sph^{k_{\ell}-1}$ is a manifold with corners diffeomorphic to the fiber product $\Disc(\bar{\nu})$ of the disc bundles $\Disc({\nu_i})$. We say that  $M_{\bar{\rho}, \bar{\nu}}$ is an \emph{iterated open book} with \emph{pivot binding} $B$ and \emph{pages} $P$.

\smallskip

 Using this terminology we have the following general \emph{Structure Theorem} when all mirrors meet.

\begin{main}
A  compact nonnegatively curved manifold $M$ with reflection group $\W$, all of whose mirrors meet admits a finite cover $M'$ which is equivariantly equivalent to an (iterated) open book $M_{\bar{\rho},\bar{\nu}}$, with pages a non negatively curved (fiber product) disc bundle $\Disc(\bar{\nu})$.
\end{main}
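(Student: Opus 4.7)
My plan is as follows. First, using \pref{coxeter}, pass to a finite cover $M'$ on which $\W$ acts as a genuine Coxeter reflection group, and decompose $\W = \W_{1} \times \cdots \times \W_{\ell}$ into its irreducible finite Coxeter factors, with canonical reflection representations $\rho_{i}$ on $\R^{k_{i}}$. Let $B := \bigcap_{\alpha} \Lambda_{\alpha}$ denote the intersection of all mirrors of $\W$ in $M'$; by hypothesis $B$ is nonempty, and it coincides with $\Fix(\W,M')$, a closed totally geodesic submanifold. The slice theorem at any $p \in B$ identifies the slice representation of $\W$ on the normal space at $p$ with $\rho := \bigoplus_{i} \rho_{i}$, so the normal bundle of $B$ in $M'$ splits $\W$-equivariantly as a direct sum of sub-bundles on which each $\W_{i}$ acts via $\rho_{i}$ and the remaining factors act trivially.

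The core of the argument is to upgrade this local product structure to a global bundle description using non-negative curvature. Fix a closed fundamental chamber $\bar{C} \subset M'$ for the $\W$-action: it is a locally totally convex submanifold with corners, whose codimension-one faces are pieces of mirrors and whose deepest corner stratum lies in $B$. The chamber inherits non-negative curvature together with a locally convex stratified boundary, so a version of the Cheeger-Gromoll soul construction applies. The plan is to iterate the Sharafutdinov-type retraction face by face and produce a metric deformation of $\bar{C}$ onto a closed totally geodesic soul $S \subseteq \bar{C}$, and then match this retraction with the slice-level splitting to extract vector bundles $\bar{\nu} = (\nu_{1},\ldots,\nu_{\ell})$ over $S$. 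By construction the intersection $B$ appears as the fiber product $\Sph(\bar{\nu})$ over $S$, while a single page appears as the fiber product $\Disc(\bar{\nu})$.

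Unfolding $\bar{C}$ under the $\W$-action and matching fiber-by-fiber with the local model $\Sph(\nu_{i} \oplus \varepsilon^{k_{i}})$ in each irreducible direction then yields the equivariant identification $M' \cong M_{\bar{\rho},\bar{\nu}}$. The pages $\Disc(\bar{\nu})$ carry non-negative curvature because they are realised as totally convex subsets of $M'$, again via the soul theorem applied to the chamber.

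The main obstacle I anticipate is precisely this chamber-soul step. Since the Cheeger-Gromoll theorem is classically formulated only for smooth boundary, handling the corners of $\bar{C}$ and propagating the slice-level product decomposition to a global bundle description will likely require inducting on the number of irreducible Coxeter factors $\ell$ and combining the soul construction with a Frankel-type rigidity argument to force the summands $\nu_{i}$ to decouple globally rather than merely in a neighborhood of $B$.
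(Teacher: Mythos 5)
Your overall route is the paper's own: pass to the Coxeter cover (\pref{coxeter}), work in a chamber $C$ with its face structure, run Sharafutdinov retractions to a soul, identify a page with a disc bundle over that soul, and induct on the factors of the normal slice representation along $B=M^{\W}$ (this is exactly how \tref{open book} and \tref{iterated open book} are organized). The problem is that the step you yourself flag as the ``main obstacle'' is precisely the mathematical content your outline leaves unsupplied, and the tool you propose for it, a Frankel-type rigidity argument, is not what makes it work (Frankel-type intersection statements need positive curvature and in any case address the wrong issue). The paper's mechanism is the Reduction Lemma (\lref{reduction}): for $d_i=\mathrm{dist}(F_i,\cdot)$ on $C$, either its soul $S_i$ is contained in the opposite stratum $\hat{F}_i$, or the convexity/flat-rectangle argument in the soul construction produces a parallel direction splitting off in the normal slice representation along $B$, i.e.\ that representation is reducible. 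This dichotomy is what converts irreducibility of $\rho_i$ into the statements that the page $\hat{F}_i$ is a disc bundle $\Disc(\nu_i)$ over the soul and that the chamber has the book structure of \lref{book chamber}. Without a substitute for \lref{reduction}, your ``matching the retraction with the slice-level splitting'' is unjustified: a priori the soul of the chamber need not lie in $\hat{F}_i$ at all, and the summands $\nu_i$ need not decouple away from a neighborhood of $B$.

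Two further points. The corner issue you worry about is not the real difficulty: by \rref{chamber} the chamber is a smooth manifold with corners whose strata are locally totally geodesic extremal sets with flat, trivial normal bundles spanned by parallel fields, and the distance functions to faces or unions of faces are concave, so the Sharafutdinov machinery applies directly; indeed the triviality of those normal bundles is what parametrizes the pages by $\Sph^{k-1}$ (respectively by the spherical simplex $\Delta^{k-1}$ inside the chamber). Also, the paper's induction is not carried out on the chamber alone but on the manifold: one applies \tref{open book} to the $\W_1$-action on the cover, observes that the remaining factors act fiberwise on $\Sph(\nu_1\oplus\varepsilon^{k_1})\to S_1$ because they fix the normal directions to $B_1$ along $B$, and then inducts on the soul $S_1$ with the $\W_2\times\cdots\times\W_\ell$-action; this is how the global fiber-product (iterated open book) description and the nonnegatively curved page $\Disc(\bar{\nu})$ are actually obtained. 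You would need to incorporate these steps, or genuine substitutes for them, for your outline to become a proof.
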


For more details including further restrictions on the metric on the pages, we refer to section 3, in particular Theorems \ref{open book} and \ref{iterated open book} and the description about additional geometric structure in the form of the presence of \emph{spherical heavens} of souls in the spirit of Yim's work \cite{Yi2}.  Also, \emph{conversely}, using a construction due to Guijarro \cite{Gu}, it follows that an (iterated) open book with the given data has an invariant metric with nonnegative curvature.

\smallskip
When passing to the universal cover, the above results in particular lead to the following general \emph{Splitting Theorem}

\begin{main}\label{splitting}
Let $M$ be a complete non negatively curved manifold with co-compact reflection group $\W$. Then the lifted reflection group $\hat{\W}$ on the universal cover $\tilde M$ is a product of Coxeter groups,
\begin{center}
$\hat{\W} = \hat{\W}_0 \times \displaystyle \prod_{i = 1}^ {\ell -1} \hat \W_i \times  \  \hat{\W}_\ell$,
\end{center}

\no where $\hat \W_0$ is affine, and the remaining factors are spherical. Correspondingly, $\tilde M$ admits a $\hat{\W}$ invariant metric splitting,
\begin{center}
$\tilde M = \R^k  \times \displaystyle \prod_{i = 1}^{\ell -1}  \Sph^{k_i}  \times \Theta_\ell \times N$,
\end{center}

\no where $N$ can be any simply connected compact manifold of nonnegative curvature on which all $\hat \W_i$ act trivially, $\Sph ^{k_i}$ is a non negatively curved standard sphere with a linear $\hat \W_i$ action, and  $\Theta_\ell$ is a compact simply connected non-negatively curved \emph{(iterated) open book}.
\end{main}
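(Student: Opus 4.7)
The plan is to lift everything to the universal cover $\tilde M$ and combine the Cheeger--Gromoll splitting with the indecomposability analysis of Theorems~A and~B. First I would lift $\W$ to the reflection group $\hat\W$ acting on $\tilde M$; because $\W$ acts cocompactly on $M$ and $\pi_1(M)$ acts by deck transformations, $\hat\W$ acts cocompactly on $\tilde M$, and its mirrors are the connected components of the preimages of the mirrors of $\W$. Since $\tilde M$ is simply connected, complete, and non-negatively curved, the Cheeger--Gromoll splitting theorem together with the de~Rham decomposition of the soul yields an isometric product decomposition $\tilde M = \R^k \times K$ in which $K$ is simply connected and line-free; cocompactness of $\hat\W$ on $\tilde M$ then forces $K$ to be compact.

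Second, every isometry of $\tilde M$ preserves this splitting because $\R^k$ is the maximal flat de~Rham factor of $\tilde M$ and $K$ contains none. Thus each reflection $r \in \hat\W$ splits as $r = (r_1, r_2)$ with $\Fix(r) = \Fix(r_1)\times\Fix(r_2)$, and the codimension-one condition on $\Fix(r)$ forces exactly one of $r_1, r_2$ to be the identity. Writing $\hat\W_0$ and $\hat\W_K$ for the subgroups generated by the two resulting families of reflections, one has $\hat\W = \hat\W_0 \times \hat\W_K$. Compactness of $K$ and cocompactness of the product action then imply that $\hat\W_0$ is cocompact on $\R^k$, hence an affine Coxeter group, while $\hat\W_K$ is a finite Coxeter group acting on $K$.

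Third, I would decompose $(K,\hat\W_K)$ into indecomposable pieces. The orbit space $K/\hat\W_K$ splits uniquely as a metric product of irreducible Alexandrov factors; the families of mirrors of $\hat\W_K$ partition accordingly into mutually orthogonal classes, and a de~Rham-type argument (legitimate because $K$ is simply connected) lifts this to an isometric $\hat\W_K$-equivariant splitting
\[
K = K_1 \times \cdots \times K_{\ell-1} \times \Theta_\ell \times N, \qquad \hat\W_K = \hat\W_1 \times \cdots \times \hat\W_{\ell-1} \times \hat\W_\ell,
\]
where each $\hat\W_i$ acts indecomposably on its assigned factor and trivially on the others, and $N$ absorbs the trivial-action factor. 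Each indecomposable $K_i$ is simply connected, so Theorem~A (whose torus and $\RP^n$ alternatives require non-trivial fundamental group) leaves only two possibilities: either $K_i = \Sph^{k_i}$ with a linear action, or all mirrors in $K_i$ meet, in which case Theorem~B (with no finite cover needed in the simply connected setting) identifies $K_i$ with an iterated open book. I would assemble all ``all mirrors meet'' factors into the single $\Theta_\ell$, using that a fiber product of iterated open books over a common base is again an iterated open book in the sense of Section~3.

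The main obstacle I expect is proving the $\hat\W$-equivariance of each splitting step: first, that the Cheeger--Gromoll splitting is respected by every reflection, and then that the metric product decomposition of $K/\hat\W_K$ actually lifts to a genuine $\hat\W_K$-invariant isometric splitting of $K$. The first point rests on the elementary fact that a codimension-one fixed set of an isometric involution of a Riemannian product must lie inside a single de~Rham factor. The second is subtler: one must rule out any permutation of isomorphic de~Rham factors of $K$ by $\hat\W_K$ (which would otherwise corrupt the factorization of the group), and this is where simple connectivity of $K$ and the involutive nature of the generators are essential, possibly after passage to a finite-index subgroup. Once this equivariant splitting is in hand, the three types of building blocks are read off directly from Theorems~A and~B.
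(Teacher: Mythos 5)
Your overall route is the paper's: split off the Euclidean factor equivariantly, decompose the orbit space/chamber, lift that splitting equivariantly to the manifold using simple connectivity, and identify the pieces via Theorems A and B. But there is a genuine error in the step you lean on twice: the claim that $\hat{\W}$ acts cocompactly on $\tilde M$. Only the group $\tilde{\W}$ of \emph{all} lifts of $\W$ is cocompact (since $\tilde M/\tilde{\W}=M/\W$ is compact); the reflection subgroup $\hat{\W}\lhd\tilde{\W}$ has quotient the chamber $\tilde C$, which can be noncompact. Concretely, take $M=\Bbb T^2=\Sph^1\times\Sph^1$ with $\W=\Z_2$ reflecting the first circle and acting trivially on the second: $\W$ is cocompact on $M$, yet $\hat{\W}$ is the infinite dihedral group acting only on the first coordinate of $\tilde M=\R^2$, with chamber $[0,\pi]\times\R$. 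So ``cocompactness of $\hat{\W}$ forces $K$ compact'' is unjustified as stated, and your further conclusion that $\hat{\W}_0$ is cocompact on $\R^k$ is actually false in this example (though $\hat{\W}_0$ is still affine). The repair is the paper's own argument: compactness of the line-free factor comes from cocompactness of the full isometric action --- via the Cheeger--Gromoll--Toponogov splitting theorem when $\pi_1(M)$ is infinite, and the Strong Splitting Theorem of section 4 when $M$ is noncompact --- and the affineness of $\hat{\W}_0$ must be argued not from cocompactness of $\hat{\W}_0$ itself but, e.g., from the fact that its mirror arrangement in $\R^k$ is invariant under the cocompact group $\tilde{\W}$, which rules out nontrivial finite irreducible factors (their common fixed affine subspace would be preserved by a finite-index subgroup of $\tilde{\W}$, contradicting cocompactness); equivalently, one observes with the paper that the Euclidean part of the chamber consists of Euclidean simplices times a trivially-acted Euclidean factor.

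Apart from this, your steps two and three parallel the paper's proof: the paper splits the chamber by W\"orner's theorem into Euclidean simplices, spherical simplices, a book chamber and a trivial factor, and then notes that the splitting of the tiles, by equivariance, yields a local hence global invariant splitting of $\tilde M$ --- which is exactly your ``lift the orbit-space decomposition'' step. Your codimension argument is the right way to handle the permutation worry you raise: a reflection interchanging two isometric factors of positive dimension would have fixed-point set of codimension at least two, so every generator, hence all of $\hat{\W}$, preserves each factor, and no passage to a finite-index subgroup is needed (nor would it be allowed, since the theorem asserts a product decomposition of $\hat{\W}$ itself).
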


\smallskip
As a consequence we derive the following \emph{Group Structure Theorem},

\begin{cor*}
A group $\W$ is a co-compact reflection group of a complete non negatively curved  manifold if and only if
\begin{center}
$\W\cong \hat \W_0\times \cdots \times \hat \W_\ell /\N$,
\end{center}

\no  where $\hat \W_0$ is an affine Coxeter group, $\hat \W_i$, $1\leq i\leq \ell$, is a
spherical Coxeter group, and $\N\lhd \hat \W$ a normal subgroup isomorphic to a product of a torsion free lattice and an elementary abelian $2$-group.
\end{cor*}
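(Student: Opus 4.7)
The plan is to deduce the corollary directly from Theorem \ref{splitting}, combined with a Bieberbach-style analysis of how $\pi_1(M)$ sits inside $\hat{\W}$.

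\emph{Necessity.} Suppose $\W$ is a cocompact reflection group on a complete non-negatively curved manifold $M$. I would first lift to the universal cover $\tilde{M}$ and invoke Theorem \ref{splitting} to obtain the product decomposition $\hat{\W} = \hat{\W}_0 \times \prod_{i=1}^{\ell-1}\hat{\W}_i \times \hat{\W}_\ell$ together with the compatible metric splitting of $\tilde{M}$. Setting $\N = \ker(\hat{\W} \to \W)$, this is a normal subgroup of $\hat{\W}$ which acts freely on $\tilde{M}$. To identify the isomorphism type of $\N$ I would project it onto each Coxeter factor. The projection to $\hat{\W}_0$ must be torsion-free, since every torsion element of an affine Coxeter group has a fixed point; by the Bieberbach argument alluded to after Theorem~A, this projection lies in the translation lattice of $\R^k$. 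For each spherical factor $\hat{\W}_i$ with $i \geq 1$, every nontrivial Coxeter element fixes a point of $\Sph^{k_i}$, so freeness combined with normality forces the projection of $\N$ into the centralizer of the $\hat{\W}_i$-action, which for an irreducible spherical Coxeter group is at most order two, generated by $-\id$ when present. Combining these constraints with the holonomy description of Corollary \ref{holonomy} yields the asserted splitting of $\N$ as a torsion-free lattice times an elementary abelian $2$-group.

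\emph{Sufficiency.} Conversely, given such a product-quotient presentation of $\W$, I would realize it geometrically by forming $\tilde{M} = \R^k \times \prod_{i=1}^{\ell-1}\Sph^{k_i} \times \Theta_\ell$ with a product non-negatively curved metric. Here $\hat{\W}_0$ acts affinely on $\R^k$, each $\hat{\W}_i$ acts linearly on $\Sph^{k_i}$, and the action on the simply connected iterated open book $\Theta_\ell$ is provided by Theorem~B together with the Guijarro construction mentioned after it. The hypothesis on $\N$, namely that it is a torsion-free lattice times an elementary abelian $2$-group normal in $\hat{\W}$, is precisely what is needed for $\N$ to act freely on $\tilde{M}$: the lattice part acts by translations on $\R^k$ (possibly combined diagonally with central involutions on the compact factors), and the $2$-group part acts through combinations of $-\id$'s and holonomy-type involutions. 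The quotient $M = \tilde{M}/\N$ is then a complete non-negatively curved manifold on which $\W = \hat{\W}/\N$ acts cocompactly as a reflection group.

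The main obstacle lies in the necessity direction, specifically in establishing that $\N$ splits as a \emph{direct} product rather than as a general extension. This requires combining the geometric freeness of the $\N$-action (which rules out most torsion via fixed-point considerations) with normality inside $\hat{\W}$ and the explicit centralizer structure of spherical Coxeter groups, together with the holonomy conclusion of Corollary \ref{holonomy} applied to the affine factor to certify that the lattice contribution splits off as a direct factor.
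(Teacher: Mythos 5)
Your overall route coincides with the paper's: pass to the universal cover, apply Theorem~C to split $\hat\W$ and $\tilde M$, set $\N=\ker(\hat\W\to\W)$, control the projections $p_j(\N)$ factor by factor, and prove sufficiency on a product model. The place where your argument deviates, and where it has a real gap, is the spherical factors. You assert that ``freeness combined with normality forces the projection of $\N$ into the centralizer of the $\hat\W_i$-action.'' Neither hypothesis delivers this: normality only gives $p_i(\N)\lhd\hat\W_i$, not that it centralizes $\hat\W_i$, and freeness of $\N$ on the product $\tilde M$ does not imply that $p_i(\N)$ acts freely on the $i$-th factor (an element of $\N$ can have fixed points in one factor while being fixed-point free in another). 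Moreover, the purely group-theoretic statement you would need is false in rank two: the rotation subgroup of a dihedral group acting essentially on $\Sph^1$ is a normal, freely acting subgroup that is not contained in $\{\pm\Id\}$. The paper closes exactly this hole with geometric input: since $\tilde M$ is simply connected, no $\Sph^1$ factors occur, so a rank-two spherical factor acts on a sphere of dimension at least two (or on the open book) and hence has a global fixed point, which kills $p_j(\N)$ there; for irreducible factors of rank at least three it invokes the fact that such a (freely acting) normal subgroup lies in the center, which is trivial or $\Z_2$. Your Schur-lemma computation of the centralizer is correct but does not substitute for these two steps.

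Two smaller points. Once the projections are pinned down, the direct-product structure of $\N$ is immediate --- $\N$ is then a subgroup of (torsion-free lattice)$\,\times\,$(central elementary abelian $2$-group), hence itself of the form $\Z^p\times\Z_2^q$ --- so the appeal to Corollary~\ref{holonomy}, which you single out as the main obstacle, is not needed; the affine projection is handled by the Torus Theorem~\ref{torus}. And in the sufficiency direction the paper does not (and, having only abstract group data, cannot) use Theorem~B: it realizes each spherical factor linearly on a sphere and lets $\N$ act on $\R^k\times\Sph^{k_1}\times\cdots\times\Sph^{k_\ell}$, where freeness holds because the $2$-group part is central and therefore acts through antipodal maps, while the lattice part acts by translations; your invocation of $\Theta_\ell$ and the Guijarro construction is superfluous there.
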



\smallskip

As indicated earlier, aside from obviously being of interest on its own, understanding reflections groups in nonnegative curvature provides the first step in understanding so-called polar actions on such manifolds (cf. \cite{FGT}, where a complete classification of polar actions in positive curvature, of cohomogeneity at least two, was carried out). The reason is that  so-called sections of a polar action are non-negatively curved manifolds with a reflection group.   Basic examples of such actions are provided by compact Lie groups with adjoint actions, where the sections are the maximal tori.  Note, that in this context, it is potentially important to include non compact reflection manifolds, since a priory it is not known if sections are compact even when the polar manifold is.

In general, there will be  no classification like in \cite{FGT} because of the presence of open books as sections. In fact, potentially one might be able to construct new non-negatively curved (polar) manifolds as in the case of cohomogeneity one actions considered in \cite{GZ}, when sections are open books.

Note also, however, that a polar action with open books as sections, should be considered as \emph{reducible}, since the associated reflection group of a section has an \emph{invariant subspace} (a totally geodesic submanifold). Thus, Theorem A, is the key starting point in an analysis of  \emph{irreducible} polar actions on compact simply connected manifolds of nonnegative curvature,  for which the following was proposed in \cite{FGT}:

\begin{conjecture}
An irreducible polar action on a simply connected nonnegatively
curved compact manifold is equivariantly diffeomorphic to a quotient of a polar
action on a symmetric space.
\end{conjecture}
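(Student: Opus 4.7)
The plan is to reduce the conjecture to the structure of a section of the polar action and then to apply the results of this paper. Given an irreducible polar action $G \times M \to M$ with section $\Sigma$ and Weyl group $\W = N(\Sigma)/Z(\Sigma)$, the section is totally geodesic, hence non-negatively curved, and $\W$ acts on $\Sigma$ as a discrete reflection group. Irreducibility of the polar action means, as noted just before the conjecture, that this reflection action admits no proper invariant totally geodesic submanifold. First I would verify that $\Sigma$ is compact: since $M/G \cong \Sigma/\W$ is compact and $\W$ acts cocompactly, a Weyl chamber has finite diameter and $\Sigma$ must be closed. Applying Theorem~A to $(\Sigma,\W)$, and excluding the (iterated) open books of Theorem~B because their pivot binding is a non-trivial invariant totally geodesic submanifold that contradicts irreducibility, I would be left with $\Sigma$ equivariantly diffeomorphic to $\Sph^n$ or $\RP^n$ with a linear spherical Coxeter action, or isometric to $\mathbb{T}^n$ with an affine Coxeter action.

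In the spherical and projective cases $\W$ is a finite irreducible Coxeter group and the slice representations of the polar action are themselves polar; by Dadok's theorem these are orbit equivalent to isotropy representations of symmetric spaces. Combining this with the positive curvature polar classification of \cite{FGT}, applied to the unit tangent sphere at a singular orbit and to the transverse geometry of the singular strata, one expects to identify the full isotropy data of the action with that of a symmetric space $G'/K'$, and then to reconstruct $M$ equivariantly as a quotient of a polar action on $G'/K'$ along the lines of \cite{FGT}. The toroidal case is the natural home of the adjoint action of a compact Lie group and, more generally, of the isotropy action on a higher rank simply connected compact symmetric space. Here I would use the affine Weyl diagram together with the isotropy groups at the corners of the chamber to read off a restricted root system, produce a candidate symmetric pair $(G,K)$, and then match $M$ with an equivariant quotient of $G/K$ via a developing map style argument on the principal stratum together with \tref{splitting} on the universal cover.

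The hardest step will be the toroidal case. Non-negative curvature is significantly weaker than positive curvature, and the rigidity methods of \cite{FGT} do not apply directly; there is no obvious reason to preclude exotic irreducible polar actions with flat torus sections exhibiting non symmetric geometry, in the spirit of the cohomogeneity one examples of \cite{GZ} referenced in the excerpt. A reasonable attack is to combine \tref{splitting} on the universal cover with careful control of the infinitesimal polar data at all corners of the Weyl chamber, but a new rigidity theorem specific to polar manifolds with torus sections, or additional curvature hypotheses, may ultimately be required.
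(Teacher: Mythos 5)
The first thing to be clear about is that the paper does not prove this statement at all: it is stated as a \emph{Conjecture} (attributed to \cite{FGT}), and the authors explicitly defer the structure/classification of compact simply connected nonnegatively curved polar manifolds to forthcoming papers. The results of this paper (Theorems A--C) are presented only as the ``key starting point'' for such an analysis. So there is no proof in the paper for you to match, and your text, by its own admission, is a research plan rather than a proof: the decisive steps are phrased as ``one expects to identify the full isotropy data\ldots'' and ``match $M$ with an equivariant quotient\ldots via a developing map style argument,'' and you concede that the toroidal case may need ``a new rigidity theorem'' altogether. Those are precisely the missing ingredients, so the proposal cannot be counted as establishing the conjecture.

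Beyond that, two concrete points in the reduction are shaky. First, your compactness argument for the section is not valid: compactness of $M/G\cong \Sigma/\W$ gives cocompactness of the $\W$-action, but cocompact does not imply compact --- $\R^n$ with an affine Coxeter group has a compact chamber and is noncompact --- and the paper explicitly warns that ``a priori it is not known if sections are compact even when the polar manifold is,'' which is exactly why it treats complete noncompact reflection manifolds. Your case analysis should therefore retain the $\R^n$ possibility from Theorem A rather than reduce to $\Sph^n$, $\RP^n$, $\mathbb{T}^n$. Second, the passage from ``irreducible polar action'' to ``Theorem A applies'' needs care: irreducibility (no invariant totally geodesic submanifold of the section) is used in the paper to rule out open-book sections, but Theorem A is formulated for \emph{indecomposable} actions, i.e.\ the orbit space is not a finite quotient of a metric product, and you would have to argue that irreducibility forces indecomposability (or handle the decomposable case separately) before invoking the rigidity trichotomy. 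Even granting both points, the heart of the conjecture --- reconstructing $M$ with its $G$-action from the section data as a quotient of a polar action on a symmetric space, especially when the section is flat --- remains open, in the paper and in your proposal alike.
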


We point out that in the above sense, in fact any polar action on a simply connected compact symmetric space of nonnegative curvature is the quotient of a polar action on a compact Lie group with a biinvariant metric.

The general structure / classification of compact simply connected non negatively curved polar manifolds will be addressed in forthcoming papers.

\smallskip

We conclude the introduction with a short outline of the paper.

In the first section we provide the necessary background for reflection groups in our generality, including the notion of a \emph{Coxeter action}, where the orbit space $M/\W$ is isometric to the closure $C$ of any open chamber $c$, i.e., of a connected component of the set of all mirrors. We analyze the lift to the universal cover and establish the existence of a canonical lift, the \emph{Coxeter cover}, where the action by $\W$ is Coxeter (Proposition \ref{coxeter}).

The overall strategy in our approach is based on the fact that follows from the work of   W\"{o}rner \cite{Wo} that the chamber $C$ for a Coxeter action is a product   $C = C_0 \times C_1\times C_2 \times \ldots \times C_{\ell}$ where $C_0$ is a manifold without boundary (typically a point), and each $C_i$, $i \ge 1$ is a smooth non negatively curved convex manifold with corners, and either (1) $C_i$ has more than $n_i = \dim C_i$ faces, but any $n_i$ faces of $C_i$ meet, or (2) $C_i$ has $k_i\le n_i$ faces and they all meet. In section 2, we show that if there is only one factor and it is of type (1) then $C$ is a simplex. This is then used to prove the spherical part of theorem A (cf. 2.5 and 2.6). The case where there is only one factor in the splitting, but it has type (2) is then handled in section 3. This is where the open book structures appear, from which Theorem B follows.

The starting point in section 4 is the observation that a co-compact action on a noncompact manifold of nonnegative curvature is decomposable unless the manifold is euclidean space, and similarly an action is decomposable on a compact manifold with infinite fundamental group unless it is flat (cf. 4.2 and 4.3). Consequently, the rest of the section deals with reflection groups on flat manifolds, and in particular the flat part of Theorem A follows from \ref{torus}.

Finally, in section 5 we give proofs of Theorem C and Corollary D. 

\smallskip

It is our pleasure to thank Burkhard Wilking for pointing out the Cheeger-Gromoll Isometry Splitting Theorem (Corollary 6.2 in \cite{CG}) to us. Our original proof of Theorem \ref{splitting} for a co-compact reflection group was based on the work of Yim \cite{Yi2} on the heaven of pseudo souls, and Gromov's theorem about groups of polynomial growth.

\section{Preliminaries and the Coxeter cover.}  

Although our focus in this paper is to analyse and describe complete
nonnegatively curved manifolds with co-compact
reflection groups, we begin with a brief review and discussion of general (co-compact)
\emph{reflection groups}, establish notation and derive important
facts about covers. See also \cite{FGT}
and \cite{GZ}, where examples are discussed, as well as \cite{AKLM}.

\medskip

For us, a \emph{reflection} $\r$ on a Riemannian manifold $M$ is an isometric involution, whose fixed point set $M^{\r}$ contains a component $\Lambda$ of codimension $1$. Any such
component $\Lambda$, is called a \emph{mirror} for $\r$. It is sometimes advantageous to label reflections by mirrors, $\Lambda_{\r}$,  keeping in mind that different mirrors may be
mirrors for the same reflection. It is essential for us not to require that mirrors separate $M$ into different components interchanged by the reflection!  Note that the latter, however, is
the case for reflections on a simply connected manifold \cite{Da}.


\medskip

Let $\W \subset \text{Isom}(M)$ be a discrete  closed subgroup of isometries of $M$ generated by all reflections contained in $\W$. We
will call any such group $\W$ a \emph{reflection group} of $M$. An \emph{open chamber} $c \subset M$ is by definition a connected component of the complement of the union of all mirrors  $\mathfrak{M}$ for all reflections in $\W$. Clearly, $\W$ acts transitively on the set of all open chambers. However, the \emph{stabilizer group} $\W_c$ may be non-trivial in general.

We say that

\begin{itemize}
\item
 The  \emph{action} $\W \times M \to M$ is \emph{Coxeter} if $\W_c$ is trivial.
\end{itemize}

 It is well known that the action is Coxeter when $M$ is simply connected \cite{Da} (and in this case $\W$ is a Coxeter group), or when $M$ is a \emph{section} of a polar action on a simply connected manifold (\cite{AT, GZ}).  We will see below in Proposition \ref{coxeter} that  $M$ admits a natural equivariant $\W_c$ cover, $M'$ with a Coxeter action by $\W$. We will refer to this cover as the \emph{Coxeter cover} of $(M, \W)$, or simply of $M$.

\smallskip

The closure $C = \cl(c)$ is called a \emph{closed chamber} or simply a \emph{chamber}, and clearly $M/\W = C/\W_c$. In particular, $M/\W = C$ when the action is Coxeter. Moreover, any point $p$ in the boundary $\partial C = C - c$ of $C$ is in one or more mirrors (at most $\dim M$). Since $\W$ is discrete, it follows that the isotropy group $\W_p$ for any such $p \in \partial C$ is a finite Coxeter group, and locally $C$ is a finite union of strongly convex sets.  A \emph{chamber face} of $C$ is by definition  a component of the intersection  $ C \cap \Lambda$, $ \Lambda \in \mathfrak{M}$, which contains an open subset of $\Lambda$. We can provide each chamber face
  with a label $i\in I$ and  will denote the face by $F_i$ and the
  corresponding reflection by $r_i$. As mentioned above, note though that different
  faces can correspond to the same reflection, i.e., possibly
  $r_i=r_j$. Obviously, $\W_c$ takes chamber faces to chamber faces, the image of which under the projection map $C \to C/\W_c = M/\W$ constitute the \emph{faces} of the orbit space $M/\W$.
  By construction we note that the boundaries $\partial C$ and $\partial (M/\W)$ are the union of chamber faces, respectively of faces. Note that in general, $C$ is not an Alexandrov space, whereas $C/\W_c = M/\W$ is. 

  \bigskip

  We now proceed to investigate natural reflection groups induced from $\W$ to covers of $M$ beginning with the universal cover.

\smallskip
Consider the universal covering map $\pi: \tilde M\to M$, and let $\tilde \W$ be the group acting on $\tilde M$ consisting of all lifts of all elements of $\W$. Clearly $\tilde \W$ fits into an exact sequence

$$1\to \pi_1\to \tilde \W \to \W\to 1,$$

\no where $\pi_1:=\pi _1(M)$.  Note that in general $\tilde\W$ is not a reflection group, and it may not be finitely generated (even when $\W$ is).

 Now let $\hat \W  \lhd  \tilde \W$ be the normal subgroup generated by all reflections in $\tilde \W$. Note that a mirror for any such reflection of $\tilde M$ is a connected component of the lift of a
 mirror in $M$. Since $\tilde M$ is simply connected, $\hat \W$ is a Coxeter group which acts Coxeter on $\tilde M$ with chamber $\tilde C$. Furthermore, $\tilde C = \tilde M/\hat \W$ is simply
 connected (see, e.g., Prop. 2.14 in [1] ).

\smallskip
Since both $\pi_1$ and $\hat \W$ are normal subgroups of $\tilde \W$, so is $\hat \W\cap \pi_1$. Moreover, it follows that ${\hat \W\cap \pi_1}\lhd \hat \W$ and ${\hat \W\cap \pi_1}\lhd \pi_1$, with quotients $\W$ and  $\Gamma : =\pi_1 /{\hat \W\cap \pi_1}$ respectively. We now claim that $\tilde \W/(\hat \W\cap \pi_1)$ is isomorphic to the direct product $\W \times \Gamma$, i.e., we have an exact sequence

$$1\to \hat \W\cap \pi_1\to \tilde \W \to \W \times \Gamma \to 1.$$

\no Indeed, this is an immediate consequence of the following algebraic lemma applied to the quotient $\tilde \W/\hat \W\cap \pi_1$.

\begin{lem}
Assume $\hat \N$ contains two normal subgroups $\N\lhd \hat \N$ and $\G \lhd \hat \N$ such that $\hat \N= \langle \N, \G\rangle$. Then  $\hat \N=\N \times \G$, the direct product, if $\N\cap \G=\{1\}$.
\end{lem}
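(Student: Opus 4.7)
The plan is to show the standard two ingredients that upgrade an internal generating pair of normal subgroups with trivial intersection to an internal direct product: (i) elementwise commutativity between $\N$ and $\G$, and (ii) uniqueness of expression. Once both are established, the multiplication map $\N \times \G \to \hat{\N}$, $(n,g) \mapsto ng$, will be a well-defined surjective homomorphism with trivial kernel, hence an isomorphism.

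First I would prove that every $n \in \N$ commutes with every $g \in \G$ by a commutator argument. The commutator $[n,g] = ngn^{-1}g^{-1}$ can be grouped in two ways: $n(gn^{-1}g^{-1})$ lies in $\N$ because $\N \lhd \hat{\N}$ forces $gn^{-1}g^{-1} \in \N$; on the other hand, $(ngn^{-1})g^{-1}$ lies in $\G$ because $\G \lhd \hat{\N}$ forces $ngn^{-1} \in \G$. Therefore $[n,g] \in \N \cap \G = \{1\}$, so $ng = gn$.

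Next, using that $\hat{\N} = \langle \N, \G \rangle$ and that $\N$ and $\G$ commute elementwise, any word in generators from $\N \cup \G$ can be rearranged so that all $\N$-letters come first and all $\G$-letters come second; hence every element of $\hat{\N}$ has the form $ng$ with $n \in \N$, $g \in \G$. For uniqueness, suppose $n_1 g_1 = n_2 g_2$ with $n_i \in \N$, $g_i \in \G$; then $n_2^{-1} n_1 = g_2 g_1^{-1} \in \N \cap \G = \{1\}$, yielding $n_1 = n_2$ and $g_1 = g_2$.

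Finally, the map $\Phi: \N \times \G \to \hat{\N}$, $(n,g) \mapsto ng$, is surjective by the existence of the decomposition and injective by its uniqueness, and is a homomorphism thanks to commutativity: $\Phi(n_1,g_1)\Phi(n_2,g_2) = n_1 g_1 n_2 g_2 = n_1 n_2 g_1 g_2 = \Phi(n_1 n_2, g_1 g_2)$. Thus $\hat{\N} \cong \N \times \G$. There is no serious obstacle; the only thing to be careful about is not to confuse the conclusion (an internal direct product requiring commutativity of the two factors) with the weaker setup of a semidirect product, which is why the commutator computation comes first.
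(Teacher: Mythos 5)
Your proof is correct and follows essentially the same route as the paper: the key step in both is that the commutator $ngn^{-1}g^{-1}$ lies in $\N\cap\G=\{1\}$ by the two normality assumptions (the paper phrases this via the conjugation homomorphisms $\rho:\G\to\Aut(\N)$ and $\tau:\N\to\Aut(\G)$, but the computation is the same). Your additional verification of uniqueness of the expression $ng$ and of the isomorphism $\N\times\G\to\hat\N$ just makes explicit what the paper leaves implicit after establishing that $\N$ and $\G$ commute.
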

\begin{proof} By the assumption, conjugation by elements of $\G$ defines a homomorphism $\rho: \G\to \text{Aut}(\N)$. Similarly, conjugation by elements of $\N$ defines a homomorphism $\tau: \N \to \text{Aut}(\G)$. Note that, for any $x\in \N$ and $g\in \G$, we have
$$gxg^{-1}=\rho(g)(x), \text{and} \ x^{-1}gx=\tau (x)(g)$$

Thus $gx=\rho(g)(x)g$ and $gx=x\tau(x)(g)$, and it follows that $\rho(g)(x)g=x\tau(x)(g)$. Hence
$$x ^{-1} \rho(g)(x) = \tau(x)(g) g^{-1}$$
where the left side belongs to $\N$, and right side belongs to $\G$. From the  assumption, $\N\cap \G=\{1\}$, it follows that both are trivial, in other words  both $\rho$ and $\tau$ are trivial, i.e, $\N$ and $\G$ commute.
\end{proof}

Thus, for the induced action by $\W=\hat \W/\hat \W\cap \pi_1$ on $\hat M:= \tilde M/{\hat \W\cap \pi_1}$, a covering space of $M$ with deck transformation group $\Gamma = \pi_1/(\pi_1 \cap \hat \W)$ we have

\begin{cor}
The action by $\W$  on $\hat M$ is Coxeter, it commutes with the $\Gamma $-action, and its chambers $\hat C$ are isometric to $\tilde C$, in particular they are simply connected.
\end{cor}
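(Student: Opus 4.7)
The plan is to establish all three claims simultaneously by realizing $\hat C$ as the isometric image of $\tilde C$ under the covering projection $p : \tilde M \to \hat M$. First I would verify that the natural map $\hat\W \hookrightarrow \tilde\W \to \W$ is surjective with kernel $\hat\W \cap \pi_1$, yielding the isomorphism $\hat\W/(\hat\W \cap \pi_1) \cong \W$. Surjectivity follows because each reflection $r \in \W$ admits a lift to $\tilde\W$ fixing a preimage of a point on the mirror of $r$; that lift is again an isometric involution with a codimension-one fixed component, hence belongs to $\hat\W$. This makes the induced $\W$-action on $\hat M$ well-defined and generated by reflections, while commutativity with the $\Gamma$-action is an immediate consequence of the direct product splitting $\tilde\W/(\hat\W \cap \pi_1) \cong \W \times \Gamma$ provided by the preceding lemma.

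The key technical step is injectivity of $p$ restricted to $\tilde C$. Here I would invoke the strict fundamental domain property for the Coxeter action of $\hat\W$ on $\tilde M$: if $g \in \hat\W$ satisfies $g\tilde C \cap \tilde C \ne \emptyset$, then $g$ fixes a point of $\tilde C$ and lies in the parabolic subgroup generated by the reflections across the mirrors through that point. Applying this to a nontrivial $g \in \hat\W \cap \pi_1$ would force $g$ to fix a point of $\tilde M$, contradicting freeness of the $\pi_1$-action. Consequently $g\tilde C \cap \tilde C = \emptyset$ for all $g \ne 1$ in $\hat\W \cap \pi_1$, which shows that $p|_{\tilde C}$ is injective and, being a local isometry, an isometric embedding onto $\hat C$. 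Simple connectivity of $\hat C$ is then inherited from that of $\tilde C$ already recorded in the excerpt.

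To finish, I would confirm that the stabilizer in $\W$ of any interior point $\hat p$ of the open chamber $\hat c$ is trivial. Lifting $\hat p$ to some $\tilde p \in \tilde c$ and a stabilizing element $w \in \W$ to $\tilde w \in \hat\W$, the identity $\tilde w \tilde p = g \tilde p$ holds for some $g \in \hat\W \cap \pi_1$, so $g^{-1}\tilde w$ fixes $\tilde p$. The Coxeter property of the $\hat\W$-action on $\tilde M$ forces this stabilizer to be trivial, hence $\tilde w = g$ and therefore $w = 1$ in $\W$. The main obstacle is the strict fundamental domain property invoked in the second paragraph, which is standard for classical Coxeter groups on constant curvature spaces via the exchange condition, but in the present generality one needs to appeal to the Davis-style chamber complex machinery, in the same spirit as the reference already used for simple connectivity of $\tilde C$.
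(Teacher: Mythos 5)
Your first two paragraphs essentially make explicit what the paper's own (very brief) proof declares to be clear from the construction: surjectivity of $\hat\W\to\W$ via lifting reflections, commutation of the induced $\W$- and $\Gamma$-actions via the product decomposition $\tilde\W/(\hat\W\cap\pi_1)\cong\W\times\Gamma$, and injectivity of $p|_{\tilde C}$ because a nontrivial deck transformation $g\in\hat\W\cap\pi_1$ with $g\tilde C\cap\tilde C\neq\emptyset$ would have to fix a point of $\tilde C$ (strict fundamental domain property of the Coxeter action of $\hat\W$ on the simply connected $\tilde M$, i.e.\ Davis), contradicting freeness. This part is sound and is the same route as the paper, just spelled out.

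The gap is in your last step. In this paper ``Coxeter'' means that the \emph{setwise} stabilizer $\W_{\hat c}$ of an open chamber is trivial, equivalently that $\W$ acts simply transitively on the chambers of $\hat M$; what you verify is only that the stabilizer of each \emph{interior point} of $\hat c$ is trivial. These are genuinely different statements: the chamber stabilizer can be nontrivial while acting freely on the chamber, which is exactly what happens in the paper's non-Coxeter Example \ref{noncoxeter}, where $\W_c\cong\Z_2$ acts freely on the chamber $\Delta\times\Sph^n$. So triviality of interior-point stabilizers does not yield the Coxeter property, and your argument as written leaves the first assertion of the corollary unproved. The repair is to run the same lifting argument on a chamber-stabilizing element rather than a point-stabilizing one: since mirrors in $\hat M$ lift precisely to mirrors in $\tilde M$, the preimage $p^{-1}(\hat c)$ is the $(\hat\W\cap\pi_1)$-orbit of $\tilde c$ (the cover is regular), so if $w\hat c=\hat c$ and $\tilde w\in\hat\W$ is a lift of $w$, then $\tilde w\tilde c=g\tilde c$ for some $g\in\hat\W\cap\pi_1$; simple transitivity of $\hat\W$ on its chambers in $\tilde M$ gives $g^{-1}\tilde w=1$, hence $w=1$ in $\W$. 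With that substitution your proof is complete and agrees in substance with the paper's.
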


\begin{proof}
By construction, it is obvious that chambers of $\hat \W$ in $\tilde M$ are projected isometrically onto chambers for $\W$ on $\hat M$ and that $\W$ acts simply transitive on its set of chambers in $\hat M$, i.e., the action is Coxeter.
\end{proof}

Note that in general the stabilizer $\Gamma_{\hat C}$ of a $\W$ chamber in $\hat M$ is non-trivial and acts freely on the chamber. Since the actions commute, this stabilizer is independent of the chamber and is the kernel $\Gamma_0$  of the induced $\Gamma$ action on the set of chambers in $\hat M$. This now leads to our desired ``resolution" $M' = \hat M/\Gamma_0$ of $M$, the $\Gamma': =\Gamma/\Gamma_0 \cong{\W_c}$  \emph{Coxeter cover} of $M$, with chambers $C' := \hat C/\Gamma_0$:

\begin{prop}[Coxeter cover]\label{coxeter}
Any manifold $M$ with reflection group $\W$, admits a commuting lift to a regular $\Gamma'$ cover $M'$ of $M$ with  Coxeter action by $\W$ and $\Gamma' \cong \W_c$.
\end{prop}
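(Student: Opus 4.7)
The plan is to take $M' := \hat M/\Gamma_0$ as suggested by the preceding paragraph and verify, one at a time, the four claims packaged into the proposition: that $M'\to M$ is a regular cover with deck group $\Gamma' = \Gamma/\Gamma_0$, that the $\W$-action descends to $M'$ and commutes with $\Gamma'$, that the descended action is Coxeter, and finally that $\Gamma' \cong \W_c$.

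The covering statement is structural. Since $\Gamma$ acts freely on $\hat M$ as deck transformations of $\hat M\to M$, so does its subgroup $\Gamma_0$, so $M'$ is a manifold and $\hat M \to M' \to M$ is a tower of coverings. Because $\Gamma_0\lhd \Gamma$ (it is the kernel of the $\Gamma$-action on the set of chambers of $\hat M$), the quotient $\Gamma'$ acts freely on $M'$ with $M'/\Gamma' = M$, i.e.\ $M' \to M$ is regular. The $\W$-action on $\hat M$ commutes with $\Gamma$ by the preceding corollary, so it trivially normalises $\Gamma_0$, descends to $M'$, and continues to commute with the induced $\Gamma'$-action there.

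For the Coxeter property I would first identify the chambers of $\W$ on $M'$. Any chamber $C'$ pulls back under $\hat M \to M'$ to a union of chambers of $\hat M$ permuted by $\Gamma_0$; by the very definition of $\Gamma_0$ each of its elements stabilises every chamber set-wise, so this preimage is a single chamber $\hat C$ and $C' = \hat C/\Gamma_0$. Hence the map from chambers of $\hat M$ to chambers of $M'$ is a $\W$-equivariant bijection, and the simple transitivity of $\W$ on the former transports to simple transitivity on the latter. Equivalently, a $w \in \W$ fixing $C'$ would lift to a $w$ sending $\hat C$ into its own $\Gamma_0$-orbit, hence fixing $\hat C$, forcing $w=1$.

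The main obstacle, more bookkeeping than geometry, is identifying $\Gamma'$ with $\W_c$. Fix a chamber $\hat C \subset \hat M$ and let $c$ be its image in $M$. For $w \in \W_c$, both $w \hat C$ and $\hat C$ project to $c$, so $w \hat C = \gamma\, \hat C$ for some $\gamma \in \Gamma$, well-defined modulo $\Gamma_0$ since $\Gamma_0$ stabilises $\hat C$. Using that $\W$ commutes with $\Gamma$ on $\hat M$, the resulting map $\phi: \W_c \to \Gamma'$ is verified to be an anti-homomorphism, which upon composing with inversion in $\Gamma'$ becomes a homomorphism. Injectivity follows from simple transitivity of $\W$ on chambers of $\hat M$, and surjectivity from the fact that for each $\gamma \in \Gamma$ the unique $w \in \W$ with $w \hat C = \gamma \hat C$ preserves $c$ and hence lies in $\W_c$. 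This establishes $\Gamma' \cong \W_c$ and completes the proposition.
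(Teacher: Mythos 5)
Your proof is correct and takes essentially the same route as the paper: the same construction $M' = \hat M/\Gamma_0$, identification of the chambers of the induced $\W$-action as $C' = \hat C/\Gamma_0$ to transport simple transitivity, and the chamber-matching correspondence over $c$ to identify $\Gamma'$ with $\W_c$. You are in fact slightly more careful than the paper's proof on the last point, observing that the matching map is a priori an anti-homomorphism that becomes an isomorphism after composing with inversion.
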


\begin{proof}
Again it is clear from the construction that the chambers of the induced $\W$ action on $M'$ are $C'$, and that $\W$ acts simply transitive on its set of chambers. Moreover, $\W$ commutes with the induced action by $\Gamma':= \Gamma/\Gamma_0$ and $\W \cap \Gamma'$ is trivial.

To see that $\Gamma'$ is isomorphic to $\W_c$, note that for any $\gamma' \in \Gamma'$ and any chamber $C'$ there is a unique $\w(\gamma') \in \W$ with $\gamma'(C') = \w(\gamma') (C')$. It follows that $\w(\gamma') \in \W_c$ and the map $\Gamma' \to \W_c$ is clearly a homomorphism. Conversely, given any $\w \in \W_c$ and chamber $C'$ projecting to $C$ there is a unique $\gamma' \in \Gamma'$ such that
$\w(C') = \gamma'(C')$.
\end{proof}

\begin{rem} We remark that $\W$ may not be a Coxeter group. However, $\hat \W$ is a Coxeter group. Hence $\W$ is a quotient group of $\hat \W$ by a normal subgroup. Notice that if $\hat \W$ is an
irreducible spherical Coxeter group of rank at least $3$, then the
normal subgroup is in the center, which is either trivial or $\Bbb
Z_2$. Similarly, if $\hat \W$ is an irreducible affine Coxeter group
of rank at least $3$, then $\hat \W=\Bbb Z^n\rtimes \W_0$, where
$\W_0$ is an irreducible spherical Coxeter group. A normal subgroup
is a sublattice of $\Bbb Z^n$ or an extension of such a sublattice
by  a center $\Bbb Z_2$ in $\W_0$.
\end{rem}

 \begin{rem}\label{w-structure}
 From the structure of fundamental groups of manifolds with nonnegative curvature, we know that both $\pi_1(M)$ as well as $\pi_1(\hat{M}) = \pi_1\cap \hat \W \lhd \pi_1$ are finitely generated, so all groups in the discussion above are finitely generated in our context of nonnegative curvature. 

Note also, that if $C'$ is a simplex (or a product of simplices), which in nonnegative curvature is often the case (cf. the subsequent sections), then  $C' = \tilde C = \hat C$, i.e,  $\Gamma_0 = \{1\}$ and $\hat M$ is the Coxeter cover of $M$. It follows that $\W = \hat{\W}/\pi_1(\hat{M})$ and $\W_c = \pi_1(M)/\pi_1(\hat{M})$. In particular, $\pi_1(M) \lhd \hat{\W}$  if the action is Coxeter.
 \end{rem}

\medskip
Motivated by \ref{coxeter} and the fact that sections of polar actions on simply connected manifolds are always Coxeter,
\begin{itemize}
\item
We will focus our attention to co-compact Coxeter actions throughout,
\end{itemize}
\no  with the exceptions of  \ref{sphere lift}, \ref{torus}, and \ref{holonomy}.

\smallskip

It is important to us that for Coxeter actions, the chambers $C = M/\W$ have a particularly nice structure:

\begin{rem}[Coxeter chamber structure]\label{chamber}By definition, $C\subset M$ is convex, and assuming $\W$ is  finitely generated, its boundary $\partial C = \bigcup_{i\in I} F_i$ is the union of its faces $F_i$, $i \in I:= \{1, \ldots, k\}$, giving rise to a natural stratification of $C$. To describe the stratification, it is convenient to use the following notation: For any subset $J \subset I$ set $\hat{F}_J: =  \bigcap_{i \in I-J} F_i$, and $F_J:= \bigcup _{i \in J} F_i$, i.e., $\hat{F}_J$ is the intersection of faces opposite of $F_J$. Note that for $J_1 \subset J_2$ obviously $F_{J_1} \subset F_{J_2}$, $\hat{F}_{J_2} \subset \hat{F}_{J_1}$,  and $F_I = \partial C$. By convention we set $\hat{F}_I = C$ and $F_{\emptyset} = \emptyset$.

With this notation all strata $\hat{F_J}$ are \emph{locally totally geodesic}. At interior points, the fibers of the normal bundle to  $\hat{F_J}$ is the orbit space of the normal slice representation of its isotropy group $\W_{\hat{F}_{J}} = \W_{I-J}$. Since $\hat{F}_J$ has codimension 1 in $\hat{F}_{J-j}$ for any $j\in J$ it follows that this \emph{normal bundle is flat and trivial}, in fact it is ``spanned'' by parallel fields. In particular, $C$ also has the structure of a \emph{smooth manifold with corners}, i.e., locally diffeomorphic to open balls of $\R_+^n$. We also point out that since the angle between any two faces is at most $\pi/2$, any of the strata $\hat{F}_J$ are \emph{extremal subsets}  of the Alexandrov space $C$, see, e.g., the survey \cite{Pe}.

There are other natural and useful convex domains associated to $C$, namely the so-called \emph{residues} of $C$. Here the $J$-residue of $C$, $J \subset I$ is the set  $\W_JC$, whose boundary is $\W_JF_{I-J}$.
\end{rem}

\smallskip
The above general structure for $C$ is especially useful in the context of nonnegative curvature, since it enables us to employ numerous \emph{strong convexity arguments} throughout. For example the distance function on $C$ to any face $F_i \subset C$ or union of faces $F_J$ (in particular the whole boundary) is \emph{concave}. One is thus in position to apply corresponding \emph{Sharafutdinov retractions} from $C$ to the associated \emph{soul} of $C$ as in the original approaches to open manifolds in \cite{CG} and \cite{Sh} (This procedure even applies to super level sets of these concave functions as long as they have maximal dimension).

\smallskip

In the context described above, the general work of W\"{o}rner \cite{Wo} about the structure of compact Alexandrov spaces with nonnegative curvature and non-empty boundary, as well as Yim's work
\cite{Yi1,Yi2} on the \emph{heaven} of \emph{pseudo-souls} in a complete open manifold $M$ of nonnegative curvature is very useful for us. Here by definition a subset $S\subset M$ is called a
 {\it pseudo-soul} if it is isometric to a soul $S_0\subset M$, and homologous to $S_0$ in $M$.

\section{Equivariant smooth rigidity: Not all faces meet}    

Unless otherwise stated we assume throughout that $M$ is a non negatively curved compact or complete Riemannian $n$ manifold with a co-compact reflection group $\W$ acting in a Coxeter fashion on $M$ with chamber $C$.

\smallskip

We first point out that the maximal number of faces $F_i$ of $C$ having non-empty intersection is $n$. In fact, at a point $p$ of intersection the corresponding faces of the chamber in the unit tangent
sphere  has at most $n$ faces, and in the latter case this is a spherical $(n-1)$ simplex, actually a fundamental domain for the isotropy group $\W_p$ of $\W$ at $p$ \cite{FGT} (for a more general result we refer to \cite{Wi}).
Also note, that if $n$ faces of $C$ have non-empty intersection, then the intersection consists of isolated points. It follows that, either:

\begin{itemize}
\item
 All faces intersect, in which case $C$ has at most $1\le k \le n$ faces, \quad or

\item
 There is a minimal $0 \le k \le n$ such that: There exist $k+1$ faces with empty intersection.
 \end{itemize}

 \medskip

The above discussion applies to general Alexandrov spaces with nonnegative curvature, for which W\"{o}rner \cite{Wo} proved the following \emph{Splitting Theorem}:

\begin{thm}[W\"{o}rner]\label{wo}
A compact $n$-dimensional Alexandrov space $A$, with non-empty boundary, not all of whose faces meet is  isometric to a product $X^{n-k} \times Y^k$ of non negatively curved Alexandrov spaces, where $X$ is isometric to the intersection of $k$ faces of $C$, with $k$ chosen as above.
\end{thm}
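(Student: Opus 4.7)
The starting point is to isolate a minimal ``obstruction'' to all faces meeting. Pick $k+1$ faces $F_0, F_1, \ldots, F_k$ of $A$ with empty total intersection and $k$ minimal. Set $X := F_1 \cap \cdots \cap F_k$; by minimality of $k$ this is nonempty, and by the orthogonal meeting of faces in the chamber structure (cf.\ Remark \ref{chamber}, whose content transfers to the general Alexandrov setting), $X$ is an $(n-k)$-dimensional compact extremal subset of $A$, disjoint from $F_0$. The goal is to split $A$ as $X \times Y$ with $\dim Y = k$.

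The main tool is the distance function $f := d(\cdot, F_0) : A \to \R_{\geq 0}$, which is concave on $A$ by local convexity of $F_0$ together with nonneg curvature. Since $f > 0$ on the compact set $X$, it attains a positive minimum $L := \min_X f > 0$ there. The crucial step of the proof is to show that $f \equiv L$ on $X$.

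To establish this constancy I would proceed by induction on dimension. Assuming the theorem in dimensions $< n$, suppose $f|_X$ is non-constant. Then the compact super-level set $Z := \{f \geq L\}$ is an extremal (hence Alexandrov) subspace of $A$, inheriting from $A$ the face structure $F_i \cap Z$ for $i = 1, \ldots, k$, and acquiring a new ``face'' $\{f = L\} \cap Z$ from the level hypersurface. These $k+1$ sets still have empty total intersection, and an application of the induction hypothesis to the lower-dimensional face structure of $Z$ produces a splitting of $Z$ that contradicts the minimality of $k$ back in $A$.

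With $f \equiv L$ established on $X$, the rigidity case of Toponogov's theorem takes over: each minimizing geodesic $\gamma_x$ from $x \in X$ to its foot on $F_0$ has length exactly $L$, is perpendicular to both $X$ and $F_0$, and any two such geodesics together with short segments in $X$ and in $F_0$ bound a flat totally geodesic quadrilateral. The union of these geodesics carves out an isometric embedding $X \times [0, L] \hookrightarrow A$. Iterating this construction with each of $F_2, \ldots, F_k$ playing the role of $F_0$, and then invoking the Alexandrov Cheeger-Gromoll splitting theorem, assembles the global isometric product $A = X \times Y$ with $Y$ a compact nonneg curved Alexandrov space of dimension $k$. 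The principal obstacle will be the inductive step proving constancy of $f$ on $X$: the level set $\{f = L\}$ is not a face of $A$ in the original reflective sense, so careful bookkeeping is needed to confirm that $Z$ together with its inherited and new ``faces'' constitutes an admissible face structure for the induction.
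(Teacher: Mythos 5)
First, a point of reference: the paper does not prove this statement at all; Theorem~\ref{wo} is quoted from W\"orner's paper \cite{Wo}, so there is no internal proof to compare yours against. Judged on its own terms, your sketch has a genuine gap at precisely the step you yourself flag as crucial, namely the constancy of $f=d(\cdot,F_0)$ on $X=F_1\cap\cdots\cap F_k$. With $L=\min_X f$ and $Z=\{f\ge L\}$, the intersection of the inherited sets $F_1\cap Z,\dots,F_k\cap Z$ is all of $X$ (since $f\ge L$ on $X$ by the definition of $L$), and intersecting further with the new ``face'' $\{f=L\}\cap Z$ yields exactly the set where $f|_X$ attains its minimum, which is nonempty by compactness of $X$. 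So the $k+1$ sets do \emph{not} have empty total intersection, and the intended contradiction with the minimality of $k$ never materializes. The same problem persists for any other choice of level: as long as the level is in the range of $f|_X$, the level set meets $X$. Moreover, the induction is supposed to be on dimension, but if $f|_X$ is non-constant then $L<\max_A f$, so $Z$ is a full-dimensional convex (not extremal) subset of $A$; superlevel sets of a concave function only drop dimension at the maximum set, so the induction hypothesis cannot be applied to $Z$ in any case. The admissibility of $\{f=L\}$ as a ``face'' (which you acknowledge as delicate) is therefore not the main issue; the mechanism of the induction itself breaks.

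Two secondary points. The assertion that $X$ has dimension $n-k$ is part of what must be proved in the general Alexandrov setting (Remark~\ref{chamber} concerns the reflection-group chamber and does not transfer for free), and the closing appeal to a Cheeger--Gromoll-type splitting is misplaced on a compact space, where there are no lines; what is needed instead is a rigidity/flat-strip argument producing the product structure directly from the (to-be-established) constancy and perpendicularity, which is essentially how \cite{Wo} proceeds via concavity of distance functions to faces and extremality of their intersections. In short, the missing idea is a correct argument for why $d(\cdot,F_0)$ is constant on the intersection of the other $k$ faces when $k$ is minimal; without it the proposal does not go through.
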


\begin{rem}It also follows (cf. \cite{Wo}) that the maximal number of faces of $A$ is $2n$, in which case $A$ is a product of intervals. When applied to $C$, we conclude in particular that $\W$ is generated by $k \le 2n$ elements.
\end{rem}
\medskip

Repeated applications of \ref{wo} above yields a metric splitting of the Coxeter chamber $C$ of the form

\begin{equation}\label{base split}
 C = \Delta_1 \times \ldots \times \Delta_{r} \times \Bbb V \times N
\end{equation}
\no where $N$ is a closed non negatively curved manifold (typically a point), and each of the remaining factors is a smooth non negatively curved convex manifold with corners, and boundary face structure given by

(1) $\Delta_i$ has more than $n_i = \dim \Delta_i$ faces, but any $n_i$ faces of $\Delta _i$ meet,

(2) $\Bbb V$ has $k\le \dim \Bbb V$ faces and they all meet.

\no The presence of a non-trivial $N$ occurs when taking products with a trivial action on $N$ (cf. \ref{factor}).

\smallskip
Our objective in this section is to begin an analysis of the case where $C$ has only one factor, and this factor is of the first kind $\Delta$. We will refer to this as the \emph{maximal indecomposible} case. The following is crucial




\begin{lem}[Simplex]\label{simplex}
When the action is maximal indecomposible, $C$ is an $n$-simplex.
\end{lem}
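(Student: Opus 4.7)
The plan is induction on $n = \dim C$, with the core observation that the hypotheses of the lemma are inherited by every facet. The base case $n=1$ is immediate: $C$ is a compact connected $1$-manifold with boundary, hence a closed interval, the $1$-simplex.

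For the inductive step, I would first establish that any $n$ faces of $C$ meet in a \emph{unique} point. By the observation opening this section, such an intersection is $0$-dimensional; on the other hand each face is geodesically convex in $C$ (its carrying mirror is totally geodesic in $M$ and $C$ is convex in $M$), so the intersection is a convex discrete set, hence a single point. The same reasoning (using $n\ge 2$) then gives that every pair of faces of $C$ meets. Now fix a facet $F_0$ of $C$. By the chamber structure of Remark \ref{chamber}, $F_0$ is itself a smooth non-negatively curved convex manifold with corners of dimension $n-1$, whose facets are the distinct intersections $F_0\cap F_i$, $i\ne 0$; there are therefore exactly $k$ of them, and any $n-1$ of them meet at the unique vertex of the corresponding $n$-fold intersection in $C$.

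Applying Wörner's Theorem \ref{wo} to $F_0$ gives a decomposition as in \eqref{base split}. The main obstacle is ruling out all but a single type-(1) factor: a type-(2) factor $\mathbb{V}'$ would force all facets of $F_0$ to meet, so $\bigcap_{i=0}^{k}F_i\neq \emptyset$ with $k+1>n$, contradicting the bound on the number of concurrent faces of $C$; and in any nontrivial multi-factor product the inductive hypothesis turns each type-(1) factor into a simplex, after which one may collect all facets of one such factor (whose intersection is empty in a simplex) together with enough further facets to reach size $n-1$, violating the ``any $n-1$ facets meet'' condition. A similar dimension count forces the Wörner closed factor $N'$ to be a point, so $F_0$ must be maximal indecomposable of type (1). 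The inductive hypothesis then gives $F_0$ an $(n-1)$-simplex with exactly $n$ facets; comparing with the count of $k$ facets of $F_0$ yields $k=n$, so $C$ has $n+1$ facets with any $n$ of them meeting in a unique vertex. Together with the simplicial cone structure of the tangent cone at each vertex (from the opening of Section 2), this is the combinatorial face lattice of the $n$-simplex, completing the induction.
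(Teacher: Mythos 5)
There are genuine gaps. The first is your uniqueness step: you deduce that any $n$ faces meet in a single point from ``each face is geodesically convex, hence the intersection is convex and discrete''. Neither half of this is available. A face is only a component of $C\cap\Lambda$ with $\Lambda$ totally geodesic, and in nonnegative curvature, where minimal geodesics are not unique, such a set need not contain every minimal geodesic between its points; moreover intersections of (weakly) convex sets need not be convex. Concretely, the two edges of a spherical lune are convex faces of a nonnegatively curved chamber and meet in two antipodal points; your argument, which nowhere uses the type-(1) hypothesis that $C$ has \emph{more} than $n$ faces, would ``prove'' that this intersection is a single point. The paper's uniqueness argument is genuinely different and uses the extra face: if $F_1\cap\dots\cap F_n$ contained two points, a component of the one-dimensional stratum $F_1\cap\dots\cap F_{n-1}$ would be a geodesic joining two of them, and the additional face $F_{n+1}$, which must meet $F_1,\dots,F_{n-1}$, would have to cross that geodesic at an interior point, which is impossible. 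A related problem is your application of Theorem \ref{wo} to the facet $F_0$: this presupposes that $F_0$, with its intrinsic metric, is a compact nonnegatively curved Alexandrov space with an identified face structure (and your induction hypothesis, as stated, requires $F_0$ to be the chamber of a reflection action). Remark \ref{chamber} gives that $F_0$ is locally totally geodesic and extremal in $C$, but not these facts; the paper sidesteps the issue by applying W\"orner's splitting only to $C$ itself.

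The second gap is the endgame. Even granting that $C$ has exactly $n+1$ faces, any $n$ of which meet in a unique vertex, and that the tangent cones at vertices are simplicial cones, ``having the combinatorial face lattice of an $n$-simplex'' does not identify $C$ as an $n$-simplex --- it does not even show that $C$ is a disk. This identification is precisely what the second half of the paper's proof supplies: by Lemma 5.1 of \cite{Wo} the vertex $p_{n+1}$ is the point at maximal distance from the opposite face $F_{n+1}$, hence the soul of $C$ for $\mathrm{dist}(F_{n+1},\cdot)$, and the concavity of this distance function (convexity of its non-maximal super level sets) is used to build a smooth gradient-like vector field on $C-\{p_{n+1}\}$, tangent to the strata meeting $F_1,\dots,F_n$, radial near $p_{n+1}$ and transverse to $F_{n+1}$; flowing along it identifies $C$ with a small simplicial neighborhood of $p_{n+1}$. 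Your proposal ends exactly where this argument (or some substitute for it) would have to begin, so the conclusion of the lemma is not reached.
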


\begin{proof}
Consider (any) $n+1$ faces $F_1, \ldots, F_{n+1}$. First note that any $n$ of them  intersect in exactly one point. If say, e.g.,  $ F_1 \cap \ldots \cap F_n$ contains at least two points, then (each component of) the 1- dimensional strata, e.g., $F_1 \cap \ldots \cap F_{n-1}$ is a geodesic joining two points of $ F_1 \cap \ldots \cap F_n$. Now $F_{n+1}$ must intersect at least one of these geodesics at an interior point, which is clearly impossible. Thus $p_i =  F_1 \cap \ldots \cap \hat {F_i} \cap \ldots \cap F_{n+1}, i = 1, \ldots , n+1$ are $n+1$ vertices of $C$. Now suppose there is another face $F_{n+2}$. Using the same reasoning it follows that the $n+1$ intersections of any $n$ among $F_{n+2}, F_1, \ldots, F_n$ coincide with the vertices $p_i, i=1, \ldots, n+1$. This on the other hand is impossible unless $F_{n+2} = F_{n+1}$, i.e., $C$ has exactly $n+1$ faces.

Now consider a vertex, say $p_{n+1}$ and its opposite face $F_{n+1}$. From Lemma 5.1 in \cite{Wo} we immediately get that $p_{n+1}$ is the set at maximal distance to $F_{n+1}$, in particular it is the soul of $C$ constructed from dist$ (F_{n+1}, \cdot)$. Using that all non-maximal super level sets of dist$ (F_{n+1}, \cdot)$ are convex we construct (applying a standard partitian of unity argument starting inductively at the most singular strata involving $F_1, \ldots, F_n$ and then $F_{n+1}$) a smooth gradient like vector field on $C - \{p_{n+1}\}$ which is tangent to all strata $F_1, \ldots, F_n$, radial near $p_{n+1}$ and transverse to $F_{n+1}$. Since a small ball around $p_{n+1}$ in $C$ is clearly a simplex, this competes the proof.
\end{proof}

\begin{rem}
An alternative proof of the above claim using only Riemannian geometry, i.e., not appealing to \cite{Wo} can be carried out by considering the convex $J = \{1,\ldots,n\}$ residue  $A_{p_{n+1}}:=\W_{p_{n+1}}C = \cup _{w\in \W_{p_{n+1}}} wC$ of $C$ in $M$, where  $\W_{p_{n+1}} = \W_J$ is the isotropy group of $p_{n+1}$.  Note, that $\partial A_{p_{n+1}}$ is the union of faces opposite $p_{n+1}$, and that $A_{p_{n+1}}$ has smooth totally geodesic interior, with $p_{n+1}$ an interior point. Now one applies Riemannian convexity arguments as in the soul theorem in a $\W_{p_{n+1}}$  equivariant fashion, which eventually leads to the conclusion that the soul of $A_{p_{n+1}}$ is $\{p_{n+1}\}$.
\end{rem}

Note that if all mirrors meet in the Coxeter cover $M'$ of $M$, they certainly meet in $M$ as well. So the assumptions in Theorem A in particular imply that its Coxeter chamber by the above lemma is a simplex. Thus the following Theorem and its Corollary will complete the proof of half of Theorem A in the introduction.

\begin{thm}[Spherical space form]\label{ssf}
Let $(M,\W)$ be a compact nonnegatively curved Coxeter manifold  with finite fundamental group and chamber $C$ a simplex. Then $M$ admits a $\W$ invariant metric of constant curvature $1$.
\end{thm}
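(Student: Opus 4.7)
The plan is to work on the universal cover $\tilde M$, identify the group $\hat\W$ as an irreducible spherical Coxeter group, produce an $\hat\W$-equivariant diffeomorphism $\tilde M\to \Sph^n$ with a standard linear Coxeter action, and pull back the round metric. First I would set up: since $\pi_1(M)$ is finite and $M$ is compact, $\tilde M$ is compact. Because $C$ is a simplex, \rref{w-structure} gives $\hat C=\tilde C=C$, $\hat M = M'$, and $\pi_1(M)\lhd\hat\W$, so $\hat\W$ acts on $\tilde M$ in Coxeter fashion with simplex chamber $C$ and $\W=\hat\W/\pi_1(M)$. Since $\hat\W$ acts simply transitively on the (finitely many) chambers tiling the compact manifold $\tilde M$, the group $\hat\W$ is finite. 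A simplex does not decompose as a nontrivial product of lower-dimensional simplices, so the Coxeter group $\hat\W$ is irreducible; being finite, it is thus an irreducible \emph{spherical} Coxeter group of rank $n+1$ (generated by the $n+1$ reflections $r_i$ across the faces $F_i$ of $C$, with Coxeter exponents $m_{ij}$ read off from the dihedral angles $\pi/m_{ij}$ at the codimension-two strata $F_i\cap F_j$).

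Next I would import the linear model: every irreducible spherical Coxeter group of rank $n+1$ admits a canonical orthogonal representation $\rho_0\colon\hat\W\to\O(n+1)$, restricting to a reflection action on $\Sph^n\subset\R^{n+1}$ whose fundamental chamber is a spherical $n$-simplex $\Delta_0$ with the \emph{same} Coxeter diagram as $(C,\{F_i\})$. The goal is to build an $\hat\W$-equivariant diffeomorphism $\Phi\colon\tilde M\to\Sph^n$ carrying $C$ to $\Delta_0$ and each $F_i$ to the corresponding face of $\Delta_0$. To do this I would first construct a face-label-preserving diffeomorphism $\phi\colon C\to\Delta_0$ of smooth manifolds with corners; inductively on the dimension of strata, starting from the vertices (where $\hat\W_p$ is a rank-$n$ spherical isotropy group and the slice representation is linearly equivalent to the corresponding linear chamber in $T_p\tilde M$ via $\Exp_p$), one extends along edges, two-faces, etc., using the smooth corner structure of the chamber noted in \rref{chamber}. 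Such a $\phi$ is automatically compatible with the $\W_{F_i}$-action along each codimension-one face, so extending by the formula $\Phi(wx)=\rho_0(w)\phi(x)$ for $x\in C$, $w\in\hat\W$, gives a well-defined $\hat\W$-equivariant diffeomorphism $\Phi\colon\tilde M\to\Sph^n$.

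Finally, pull back the round metric $g_0$ of curvature $1$ on $\Sph^n$: the metric $\tilde g:=\Phi^\ast g_0$ on $\tilde M$ has constant curvature $1$ and is $\hat\W$-invariant by construction. Since $\pi_1(M)\lhd\hat\W$ acts freely as deck transformations and preserves $\tilde g$, the metric descends to a $\W$-invariant metric $g$ on $M$ of constant sectional curvature $1$, completing the proof. The main obstacle, and what needs the most care, is step two: producing the equivariant diffeomorphism $\Phi$. Conceptually this is the statement that a simply connected manifold realizing the Coxeter complex of a given finite irreducible Coxeter group (with the expected smooth corner structure at the model chamber) is equivariantly diffeomorphic to the sphere with its linear Coxeter action; the subtlety lies in matching the smooth/corner structures at higher codimension strata, which is handled by the induction on strata using the linearization at vertices provided by the exponential map together with \lref{simplex} applied at each iterated vertex residue.
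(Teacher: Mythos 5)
Your proposal is correct and follows essentially the same route as the paper: pass to the compact universal cover, note via Remark \ref{w-structure} that $\pi_1(M)\lhd\hat\W$ with $\hat\W$ a finite Coxeter group acting with simplex chamber, identify that chamber with the spherical simplex of the linear model on $\Sph^n$ compatibly with the face/corner structure, and transport the round metric down to $M$; the paper packages the chamber identification as a constant curvature $1$ metric on $\tilde C$ (via tangent-cone comparison with the linear action and the argument of Corollary 2.10 of \cite{FGT}) and extends it by $\hat\W$, whereas you build a global $\hat\W$-equivariant diffeomorphism and pull back $g_0$, which is the same content. One inessential slip: a simplex chamber does not force $\hat\W$ to be irreducible (e.g.\ $\Z_2^{n+1}$ on $\Sph^n$ has the right-angled spherical simplex as chamber, a join rather than a metric product), but irreducibility is never used, since any finite Coxeter group of rank $n+1$ has an orthogonal geometric representation on $\R^{n+1}$ whose spherical chamber is an $n$-simplex with the prescribed diagram.
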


\begin{proof}
 By \ref{w-structure}, $\pi_1(M)$ is a normal subgroup of the Coxeter group $\hat{\W}$ acting on the universal cover $\tilde{M}$. By assumption $\tilde{M}$ is compact and hence $\hat{\W}$ is a finite Coxeter group, acting simply transitively on its set of chambers. In this case all tangent cones at any point of a
boundary strata of a chamber $\tilde C = \tilde{M}/\hat{\W}$ ($=C$) is isometric to a
corresponding tangent cone for the linear action by the Coxeter group
$\hat{\W}$ on $\Sph^n$. From the above lemma and arguing as in Corollary
2.10 of \cite{FGT} we see that $\tilde C$ admits a metric of constant
curvature 1, which extends via $\hat{\W}$ to an invariant metric on $\tilde{M}$. Since $\pi_1(M) \lhd \hat{\W}$ one gets an induced constant curvature metric on $M$ invariant under $\W$.
\end{proof}

In the above theorem it is well known that $\pi_1(M) \lhd \hat{\W}$ is either trivial or $\Z_2$ acting as the antipodal map on the sphere. This has the following somewhat surprising consequence:

\begin{cor}\label{sphere lift}
Let $M$ be compact non negatively curved manifold with finite fundamental group. Then any reflection group $\W$, whose Coxeter lift has simplex chambers admits an invariant metric of constant curvature.
\end{cor}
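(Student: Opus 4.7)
The plan is to apply \tref{ssf} to the Coxeter cover $M'$ and then descend the resulting constant curvature metric along the deck group $\Gamma' \cong \W_c$. By \pref{coxeter}, $M' \to M$ is a regular cover with $\Gamma'$ commuting with the $\W$-action on $M'$, and $\W$ acts in a Coxeter fashion on $M'$ with chamber $C'$, which is a simplex by hypothesis. Since $\pi_1(M)$ is finite, so is $\pi_1(M')$, and \tref{ssf} yields a $\W$-invariant metric $g$ of constant curvature $1$ on $M'$. By the remark preceding the corollary, $\pi_1(M') \lhd \hat\W$ is trivial or $\Z_2$ (antipodal), so $M' = \Sph^n$ or $\RP^n$, and $\hat\W$ acts linearly and irreducibly on $\tilde M = \Sph^n$ (irreducibility since a simplex does not decompose as a nontrivial product).

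It remains to show that $\Gamma'$ acts on $(M', g)$ by isometries, as then $M = M'/\Gamma'$ inherits the constant curvature metric, invariant under the induced $\W$-action. Since $\Gamma'$ commutes elementwise with $\W$, each $\gamma \in \Gamma'$ satisfies $\gamma r \gamma^{-1} = r$ for every reflection $r \in \W$, and therefore preserves each mirror $\Fix(r)$ setwise. The induced action of $\Gamma'$ on $C' = M'/\W$ thus permutes the codimension-one faces by a permutation that preserves the Coxeter matrix $(m_{ij})$ (since the order of $r_i r_j$ is a conjugation invariant), i.e., it acts by Dynkin diagram automorphisms of the irreducible Coxeter group $\hat\W$.

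For an irreducible finite Coxeter group, the group of diagram automorphisms is realized inside $N_{\O(n+1)}(\hat\W)$ as the full isometry group of the spherical simplex $C'$ with its prescribed Coxeter matrix, and such a simplex is rigid up to isometry. Consequently one can choose the constant curvature metric on $\tilde C = C'$ in the construction of \tref{ssf} so that $\Gamma'$ acts isometrically on it; extending via $\hat\W$ produces a metric $g$ on $\tilde M = \Sph^n$ invariant under the full $\tilde\W$-action, and in particular under $\pi_1(M) \subset \tilde\W$. Descending to $M$ yields the desired constant curvature metric.

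The main obstacle is this last step: showing that the a priori merely smooth $\Gamma'$-action on $C'$ can be put into isometric normal form by a $\W$-equivariant adjustment of $g$. This reduces to the uniqueness of the spherical simplex with prescribed Coxeter data, combined with a standard equivariant smoothing argument for finite groups acting on a manifold with corners compatibly with the combinatorial face stratification.
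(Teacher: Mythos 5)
Your overall route is the same as the paper's: apply \tref{ssf} to the Coxeter cover, use that the deck group $\Gamma'\cong\W_c$ commutes with $\W$ and therefore induces a ``diagram automorphism'' action on the simplex chamber $C'$, and then arrange the constant curvature metric on $C'$ to be invariant under this induced action before extending via $\W$ and descending to $M$. The problem is that the step you yourself flag as ``the main obstacle'' is precisely the content of the paper's proof, and your proposed reduction does not fill it. First, the induced $\Gamma'$-action on $C'$ is not ``a priori merely smooth'': $\Gamma'$ is a deck group of a Riemannian covering, so it acts by isometries of the given nonnegatively curved metric, and what is needed is an equivariant version of the constant-curvature construction of \tref{ssf}, not an equivariant smoothing of a smooth action. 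Uniqueness of the spherical simplex with prescribed Coxeter data only identifies the target model; it does not produce a $\Gamma'$-equivariant identification of $C'$ with that model compatible with the prescribed tangent cones along the boundary strata. Second, the paper makes the equivariant step tractable by first pinning down $\Gamma'$ via a soul/fixed-point argument you do not use: $\Gamma'$ preserves each mirror and its complement, and the complementary components are convex discs whose souls are points; freeness of the $\Gamma'$-action then forces $\W_c$ to be trivial when $M'=\RP^n$, and to be trivial or $\Z_2=\langle a\rangle$ when $M'=\Sph^n$. In the latter case the single induced involution $A$ on the simplex fixes its soul point and permutes its vertices by a diagram automorphism, and the convexity/critical point construction of Corollary 2.10 of \cite{FGT} is carried out $\langle A\rangle$-equivariantly, just as in the linear model. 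Since you never bound $\Gamma'$, your argument must handle an arbitrary finite group of face-preserving isometries and still lacks the equivariant construction, so as written there is a genuine gap at the decisive step.

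A smaller point: your parenthetical that $\hat\W$ acts irreducibly ``since a simplex does not decompose as a nontrivial product'' is incorrect --- a reducible essential spherical Coxeter group has chamber the spherical join of simplices, which is again a simplex --- but this is harmless, since diagram automorphisms of any finite Coxeter group are realized by isometries of the model chamber, so irreducibility is not actually needed anywhere in the argument.
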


\begin{proof}
From \ref{ssf} and \ref{w-structure}, we know that the Coxeter cover $M' = \hat{M}$ is either $\Sph^n$ or $\RP^n$ with an invariant metric of nonnegative curvature. Recall that by construction of the Coxeter cover in \ref{coxeter} the action by $\Gamma' \cong \W_c$ on $M'$ commutes with the $\W$ action. In particular, any mirror as well as its complement is preserved by $\Gamma'$. If $M'$ is  $\RP^n$ such a complement is a (convex) open disc, whose soul must be a point preserved by $\W_c$. But since the action is free, $\W_c$ must be trivial.

If $M'$ is $\Sph^n$, it follows that $\Gamma' = \pi_1(M)$ commutes with $\hat{\W} = \W$. Arguing as in the projective space case, it follows that $\W_c \cong \Gamma'$ is either trivial or $\Z_2 = \langle a \rangle $. In the latter case note that $a$ induces an automorphism of $\W$ which is reflected also in the induced action, by say $A$ on the orbit space simplex $\Sph^n/\W$. Now $A$ fixes the soul point of the simplex $\Sph^n/\W$ and maps vertices according to the induced automorphism of the diagram for $\W$. Now using convexity and critical point theory arguments
$\langle A\rangle $-equivariantly, we conclude that $\hat C = \Sph^n/\W$ admits an $A$-invariant metric of constant curvature, analogous to the proof of Corollary 2.10 of \cite{FGT}, just like its linear model. It follows, that $M$ admits an invariant constant curvature metric, i.e., $M = \RP^n$ and $\W_c \cong \Gamma' = \pi_1(M)$ acts on  $\Sph^n$ as the antipodal map.
\end{proof}

\begin{rem}
We will see that when $M$ has non-compact universal cover, and the action is indecomposable, then sections are flat. In particular, $C'$ is a flat simplex  when the action is maximal indecomposable. This will lead to a proof of the sencond half of Theorem A in the introduction  (cf. the Torus Theorem \ref{torus}).
\end{rem}

\section{Open book structures: All faces meet}   

In this section we will develop complete structure results for  Coxeter manifolds of nonnegative curvature, where all mirrors meet,  equivalently the chamber $C = \Bbb V$ in the splitting \ref{base split}. These Theorems will have Theorem B of the introduction as an immediate consequence.
\smallskip

As in the case of maximal indecomposible  Coxeter actions it is crucial to understand the structure of a chamber $C$. Note that in the case under consideration, there are $k \le n$ mirrors in $M$ and their intersection coincides with the fixed point set $M^{\W}$, which in $C$ is also the intersection $B:= F_1 \cap \ldots \cap F_k$
of all its faces, i.e., $\hat{F}_{\emptyset}$ recalling our notation from \ref{chamber} to be used throughout below. Moreover, for each of $p \in B$, $\W$ acts effectively on the normal sphere $\Sph^{\perp} = \Sph^{k-1} \subset \R^k$ to $B$ at $p$, and $\W$ is a finite Coxeter group.

\medskip
For each face $F_i$ we let $S_i \subset C$ be the soul in $C$ associated to the distance function $d_i: = \text{dist}( F_i,  \cdot )$ to $F_i$. Recall that  by construction, $S_i$ is the image, $Sh_i(C)$ of the associated Sharafutdinov deformations retraction, $Sh_i : C \to C$ of $C$. Since, this retraction is a concatenation of \emph{gradient pushes}, and gradient pushes preserve extremal sets \cite{Pe} it follows immediately that

\begin{itemize}
\item
For each $i \in I$, the soul $S_i$ meets every component of  $\hat{F}_i$.
\end{itemize}

In particular, if $S_i \subset \hat{F}_i$ it follows that $\hat{F}_i$ is connected. Moreover,

\begin{lem}[Reduction]\label{reduction}
If $S_i$ is not a subset of $\hat{F_i}$, it is perpendicular to it, and the normal slice representation of $\W$ along $B$ is reducible.
\end{lem}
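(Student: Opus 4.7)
My plan is to reduce everything to an analysis of the slice representation of $\W$ on the normal space $\R^k$ to $B$ at a point $p_0 \in B$, and then to propagate the pointwise picture along $\hat{F}_i$ using its flat, trivial normal bundle (cf.\ \rref{chamber}).

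First, I would note that $S_i \cap \hat{F}_i \ne \emptyset$: since $\hat{F}_i = \bigcap_{j \ne i} F_j$ is an extremal subset of $C$ (an intersection of the extremal faces $F_j$) and the Sharafutdinov retraction $Sh_i$ is a concatenation of gradient pushes, which preserve extremal subsets \cite{Pe}, we have $Sh_i(\hat{F}_i) \subset S_i \cap \hat{F}_i$. Fix $p_0 \in B \subset \hat{F}_i$. Under the normal slice identification, a neighborhood of $p_0$ in $C$ is modeled on $B \times \hat C$, where $\hat C \subset \R^k$ is the Coxeter chamber for the slice representation---a cone at the origin bounded by the mirrors $H_j$. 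In this model $d_i$ corresponds (to leading order) to the linear form $\langle \cdot, \alpha_i \rangle$, where $\alpha_i$ is the inward unit normal to $H_i$, and the edge $\ell_i = \bigcap_{j \ne i} H_j$, tangent to $\hat{F}_i$ at $p_0$, is spanned by the fundamental weight $\omega_i$ dual to $\alpha_i$.

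Next I would establish the contrapositive of the reducibility half of the statement: if the slice representation is irreducible then $S_i \subset \hat{F}_i$. The orthogonal projection of $\alpha_i$ onto the closed cone $\hat C$ is a positive multiple of $\omega_i$: by Moreau's decomposition, using that the Gram matrix of fundamental weights of a spherical Coxeter system has nonnegative entries, one checks $\alpha_i - \omega_i/|\omega_i|^2 \in \hat C^\circ$. Thus the direction of steepest ascent of $d_i$ within $\hat C$ at $0$ lies along $\omega_i$, i.e.\ along $\ell_i \subset \hat{F}_i$. In the irreducible case $\omega_i$ cannot be a positive multiple of $\alpha_i$ (otherwise $\alpha_i \perp \alpha_j$ for all $j \ne i$, making $r_i$ an isolated node of the Coxeter diagram and contradicting irreducibility). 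Using the parallel fields that trivialize the normal bundle of $\hat{F}_i$ in $C$, this local ascent picture propagates along $\hat{F}_i$, and one concludes that $Sh_i$ takes $C$ into $\hat{F}_i$, so $S_i \subset \hat{F}_i$.

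Hence if $S_i \not\subset \hat{F}_i$ the slice representation must decompose as $\R^k = V_1 \oplus V_2$, $\W = \W_1 \times \W_2$, with $H_i$ a mirror of $\W_1$ in $V_1$. Then $d_i$ depends only on the $V_1$-factor, so in the product chamber $\hat C = \hat C_1 \times \hat C_2$ the local soul is $(\text{soul of } d_i|_{\hat C_1}) \times \hat C_2$, whereas $\hat{F}_i$ lies in $V_1 \times \{0\}$. The extra tangent directions of $S_i$ at a point of $S_i \cap \hat{F}_i$ therefore lie in $V_2$, which is orthogonal to $V_1 \supset T\hat{F}_i$. Parallel propagation along $\hat{F}_i$ turns this pointwise orthogonality into global perpendicularity $S_i \perp \hat{F}_i$.

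The main obstacle I anticipate is the ``Sharafutdinov respects the slice picture'' step: rigorously one needs that near $p_0$ the flow $Sh_i$ on $C$ is controlled, to first order in the normal to $B$, by the constrained gradient flow of the affine model on $\hat C$, and that this local analysis patches consistently along $\hat{F}_i$. This is precisely where the parallel normal fields of \rref{chamber} play the essential role, enabling the local-to-global propagation.
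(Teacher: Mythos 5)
Your argument has a genuine gap at exactly the step you flag, and it is not a repairable technicality: the location of the soul $S_i$ is not governed by the first-order slice model of $d_i$ at $B$. Since $B=\bigcap_j F_j\subset F_i$, the function $d_i=\mathrm{dist}(F_i,\cdot)$ vanishes identically on $B$; the soul is obtained by retracting $C$ onto the set at \emph{maximal} distance from $F_i$ and then iterating the construction inside that set, so its position is determined by the global geometry of the superlevel sets of $d_i$, far from $B$, where the linearized picture $\langle\cdot,\alpha_i\rangle$ on the cone $\hat C$ says nothing. The inference ``the constrained steepest-ascent direction at $0$ is $\omega_i\in\ell_i$, hence $Sh_i$ takes $C$ into $\hat F_i$ and $S_i\subset\hat F_i$'' is therefore a non sequitur; the parallel fields of \rref{chamber} only trivialize the normal bundle of $\hat F_i$, they do not linearize the metric of $C$ or control the gradient pushes of $d_i$ away from $B$, so there is nothing to ``propagate''. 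A similar problem affects your perpendicularity half: you read off orthogonality from a product chamber $\hat C_1\times\hat C_2$, but reducibility of the \emph{linear} slice representation does not by itself give a local metric splitting of $C$ near $B$, let alone near the points of $S_i\cap\hat F_i$ where the tangent space of $S_i$ is being examined.

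What is missing is the global, soul-theoretic rigidity input that the paper uses. The paper works in the residue $\W_{\hat F_i}C$, a convex subset of $M$ with totally geodesic interior: its soul is $\W_{\hat F_i}S_i$, and at a point $p\in S_i\cap\hat F_i$ the tangent space of this soul is $\W_{\hat F_i}$-invariant, which immediately yields the perpendicularity statement (an invariant subspace splits into its fixed part, tangent to $\hat F_i$, and an orthogonal complement) without any product-metric assumption. For reducibility, the paper assumes $S_i\not\subset\hat F_i$ and uses the standard soul rigidity (minimal geodesics from $p\in S_i$ to $S_i\cap\hat F_i$ and to $F_i$ are perpendicular and span a flat totally geodesic rectangle with one edge in $F_i$ and the opposite edge a geodesic $\gamma\subset\hat F_i$ from $S_i\cap\hat F_i$ to $F_i$); it is this flat strip, transported to the normal slice along $B$, that produces an invariant splitting of the slice representation, with a reduction to the general stratum $D=\hat F_J$ handled via the totally geodesic set $M^{\W_{I-J}}\cap\W_{\hat F_i}C$ and the fact that intrinsic and extrinsic souls agree for distance-nonincreasing retractions. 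Your linear-algebraic observations about $\alpha_i$, $\omega_i$ and the Coxeter diagram are fine as far as they go, but they cannot substitute for this global flat-strip/invariance argument, which is where the actual content of the lemma lies.
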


\begin{proof}
Consider the $I-i$ residue $\W_{\hat{F_i}}C$ of $C$ with boundary $\W_{\hat{F_i}}F_i$. Clearly, the usual Riemannian construction of the soul of  $\W_{\hat{F_i}}C$ is $\W_{\hat{F_i}}$ invariant and equivalent to working on $C$. In particular, its soul is $\W_{\hat{F_i}}S_i$, a totally geodesic sub manifold of  $\W_{\hat{F_i}}C$. Pick a point $p \in S_i \cap \hat{F}_i \subset \W_{\hat{F_i}}S_i$. Obviously, the tangent space to the soul $\W_{\hat{F_i}}S_i$ of $\W_{\hat{F_i}}C$ at $p$ is $\W_{\hat{F_i}}$ invariant. If this is not a subspace of the tangent space to $\hat{F_i}$, its complement is perpendicular to it, i.e., $S_i$ is perpendicular to $\hat{F_i}$.

If $S_i - \hat{F}_i  \ne \emptyset$, there is a smallest strata $D = \hat{F}_J$ containing $S_i$ and meeting it at interior points of $D$. Suppose first that $D = C$ and let $pq$ be a minimal geodesic from $p \in c \cap S_i$ to  $S_i \cap  \hat{F_i}$, and $pq_i$  a minimal geodesic from $p$ to $F_i$. Clearly, $pq_i$ is perpendicular to $pq$ as well as to $F_i$. It follows that $pq_i$ and $pq$ are adjacent edges in an isometrically embedded flat rectangle in $C$ with opposite edges in $F_i$, respectively a minimal geodesic $\gamma$ from $S_i \cap  \hat{F_i}$ to $F_i$. Since $qp$ is not on the boundary of the normal space of directions  to $\hat{F_i}$ in $C$ at $q$, it follows that $\gamma$ is a geodesic in $\hat{F_i}$, and in particular we see that the $\W$ normal slice representation is reducible.

In general, if $S_i$ is not contained in $\hat{F}_i$, let $D= \hat{F}_J$ be the smallest strata containing $S_i$ and meeting it at interior points. In the residue,  $\W_{\hat{F_i}}C$ consider the corresponding totally geodesic subset $M^{\W_{I-J}} \cap \W_{\hat{F_i}}C$, i.e., the intersection of the residue with the mirrors determined by $\hat{F}_J$. Clearly the soul of the residue is contained in this subset. Moreover, the Sharafutdinov retraction of the  residue preserves the subset, and since it is totally geodesic, this restricted deformation retraction is also distance non-increasing with respect to the intrinsic metric on the set. From these properties, it follows as in the original approach by Sharafutdinov (cf. also \cite{Yi1}) that the intrinsic soul of the subset is isometric to ``extrinsic" soul, i.e., the soul of the residue. Again by invariance, it follows that the intrinsic soul of the strata $D$ is isometric to $S_i$, and in particular intersects $D$ at interior points. The proof is now completed as above.
\end{proof}

We are now ready to describe the structure of $C$, when the action $\rho$ of $\W$ on the normal spaces $\R^k$ to $B$ is irreducible.

Note that for each $i$, the strata $\hat{F_i}$ is a connected, compact non negatively curved manifold with boundary $B$. Moreover, when $S_i$ is contained in $\hat{F_i}$ it follows (as in the proof above) that $\hat{F_i}$ has the structure of a disc bundle of a non negatively curved vector bundle $\nu_i$ over $S_i$.

Using the description in the model examples of the introduction, we will show that for each $i$, $M$ is equivariantly equivalent to the open book  $M_{\rho,\nu_i} = \Sph (\nu_i \oplus \varepsilon ^{k}) =:\Sph(\nu_i,\rho) \to \Disc^k$. To do this, we will show that $C = M/\W$ is  $\Sph (\nu_i \oplus \varepsilon ^{k})/\W  \to \Disc^k/\W = \text{cone}(\Delta_s^{k-1})$, where $\Delta_s^{k-1} = \Sph^{k-1}/\W$. Due to this description, we also say that $C = \Sph (\nu_i \oplus \varepsilon ^{k})/\W$ is a book with binding $B$ and pages $\Disc(\nu_i) = \hat{F_i}$ parametrized by $\Delta_s^{k-1}$. Indeed, with $\Delta_s^{k-2}$ being the space of directions in $\Delta_s^{k-1}$ opposite its face labelled $i$, we have:

\begin{lem}[Book chamber]\label{book chamber}
If the action by $\W$ on normal spaces to $B$ is irreducible, and $\W$ has rank $k \le n$, then $C$ has the structure of the ball bundle of $\nu_i \times \emph{cone}(\Delta_s^{k-2})$.

Alternatively, $C$ has the structure of a book with binding $B$ and pages $\hat{F_i}$ parametrized by $\Delta^{k-1}$, the normal space of directions in $C$ along $B$.
\end{lem}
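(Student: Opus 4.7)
The plan is to use the Reduction Lemma \ref{reduction} to place the soul $S_i$ inside $\hat{F}_i$, to read off the normal structure of $\hat{F}_i$ in $C$ from \ref{chamber}, and then to extend the resulting local tubular identification globally via concavity of the distance to $F_i$. First, since the normal slice representation $\rho$ of $\W$ along $B$ is irreducible by hypothesis, \ref{reduction} excludes the alternative in which $S_i$ sits perpendicularly off $\hat{F}_i$; hence $S_i \subset \hat{F}_i$. Restricting the Sharafutdinov retraction associated with the concave function $d_i := \mathrm{dist}(F_i,\cdot)$ to $\hat{F}_i$ (a compact nonnegatively curved manifold with boundary $B$) realizes $\hat{F}_i = \Disc(\nu_i)$ as the disc bundle of a nonnegatively curved vector bundle $\nu_i$ over $S_i$.

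Second, by \ref{chamber} the normal bundle $\nu(\hat{F}_i, C)$ is flat and trivial of rank $k-1$, spanned by parallel sections that extend the complementary normal directions at $B$. The stabilizer $\W_{I-i}$ acts linearly on each such fibre $\R^{k-1}$ with fundamental chamber $\mathrm{cone}(\Delta_s^{k-2})$, where $\Delta_s^{k-2}$ is the face of $\Delta_s^{k-1}$ opposite the vertex associated with $\hat{F}_i$. Combining the parallel trivialization with the normal exponential map gives a $\W_{I-i}$-equivariant identification of a tubular neighbourhood of $\hat{F}_i$ in $C$ with $\hat{F}_i \times \mathrm{cone}(\Delta_s^{k-2})$, preserving the chamber structure.

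Third, extend this local identification globally. Because $d_i$ is concave on $C$ with unique maximum set $S_i$, the gradient-push flow of $d_i$ continuously deformation-retracts $C$ onto $S_i$ while preserving the extremal subset $\hat{F}_i$. Propagating the local $\W_{I-i}$-equivariant fibre structure along $S_i$ via the parallel normal frame from \ref{chamber} presents $C$ as the total space of a bundle over $S_i$ whose fibre over $p \in S_i$ is the unit ball in $(\nu_i)_p \oplus \R^{k-1}/\W_{I-i}$; equivalently, this is the ball bundle of $\nu_i \times \mathrm{cone}(\Delta_s^{k-2})$ over $S_i$, as asserted. The alternative book description follows by re-parametrizing the fibre: a unit direction in $\R^k/\W$ along $B$ is a point $y \in \Delta_s^{k-1}$, and the preimage of the ray $[0,1]y$ is a page isometric to $\hat{F}_i = \Disc(\nu_i)$, with all pages meeting along the binding $B$.

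The main obstacle is this global extension step: one must rule out critical points of $d_i$ off $S_i$, and must verify that the parallel trivialization of $\nu(\hat{F}_i, C)$ is fibrewise compatible with the $\W_{I-i}$-action. The former follows from strong concavity of $d_i$ in nonnegative curvature together with extremality of $S_i$; the latter is precisely the content of the ``flat and trivial, spanned by parallel fields'' statement in \ref{chamber} applied to $\hat{F}_i$ and to all intermediate strata $\hat{F}_J$.
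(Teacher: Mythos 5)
Your proposal is correct and follows essentially the paper's own route: the Reduction Lemma to force $S_i \subset \hat{F}_i$ (so $\hat{F}_i = \Disc(\nu_i)$ via the Sharafutdinov retraction for $d_i$), the flat/trivial parallel normal structure of Remark \ref{chamber}, and the soul construction plus a gradient-like globalization to produce both the ball-bundle and book descriptions. The only cosmetic difference is that the paper phrases the fiber as the join $\Sph^m * \Delta_s^{k-2}$ and explicitly builds a smooth vector field (radial near $B$, tangent to all strata, transverse to $S_i \times \Delta^{k-1}$, as in \cite{GK}) to handle the corner structure, whereas you reparametrize the fiber directly --- same substance.
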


\begin{proof}
Since by the reduction lemma \ref{reduction}, the soul $S_i \subset \hat{F_i}$ for $d_i: = \text{dist}( F_i,  \cdot )$ is a totally geodesic sub manifold, and also a soul for $\hat{F_i}$, the first claim is an immediate consequence of the soul construction for $C$ (or alternatively for its $I-i$ residue), recalling that the normal bundle of $C$ to $\hat{F_i}$ along interior points (including $S_i$), is  spanned by $k-2$ parallel fields (see remark \ref{chamber}).

For the same reason, $F_i$ is the ``sphere bundle " boundary of this bundle, i.e., the normal space of directions bundle of $S_i$ in $C$. Note, that each fiber of this bundle is the join of a normal sphere to $S_i$ in $\hat{F_i}$ with $\Delta_s^{k-2}$. Moreover, the bundle orthogonal to $\hat{F_i}$ is trivial.

Viewing the join $\Sph^m * \Sph^{k-2} \to \Disc^{k-1} = \text{cone}(\Sph^{k-2})$ as a $k-2$ dimensional \emph{open book} with \emph{binding} $\Sph^m$ and pages $\Disc^{m+1}$ (following the flow lines for the gradient of  the distance function to either sphere), provides the desired induced structure on  $\Sph^m * \Delta^{k-2} \subset \Sph^m * \Sph^{k-2}$. This in turn yields the book structure on $F_i$ and then for all of $C$, using that the simplex bundle over $S_i$  is trivial. Specifically, one constructs (much like in \cite{GK}) a smooth vector field on $C$ which is radial near $B$, tangent to all strata and transverse to the sub manifold with corners, $S_i \times \Delta^{k-1}$, that emerged from the soul construction. Using that all normal bundles to all strata are trivial as observed in \ref{chamber}, one can also arrange that $S_i \times \Delta^{k-1} \subset C$ is perpendicular to all strata.
\end{proof}

The following is now a simple consequence of the fact that all constructions above can be carried over to $M$ equivariantly, and noting the same structure on the sphere bundle $\Sph(\nu_i \oplus \varepsilon ^{k})$ of $\nu_i \times \R^k$ equipped with the obvious action by $\W$.

\begin{thm} [Open Book]\label{open book} Let $(M, \W)$ be a non negatively curved Coxeter $n$-manifold with rank $k\le n$, where all mirrors meet in $B = M^{\W}$. If the normal representation $\rho$ along $B$ is irreducible, then $M$ is equivariantly diffeomorphic to $\Sph( \nu \oplus \varepsilon ^{k})$, where $\nu$ is a non negatively curved vector bundle with sphere bundle $B$.

Alternatively, $M$ is an open book with binding $B$ and non negatively curved pages $\Disc(\nu)$ parametrized by $\Sph^{k-1}$.
\end{thm}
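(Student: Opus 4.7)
The plan is to identify $M$ chamber-by-chamber with the model $\Sph(\nu_i \oplus \varepsilon^k)$, exploiting that both admit Coxeter actions by $\W$ with simply transitive chambers.

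First, I would apply Lemma \ref{book chamber} to the model itself: the linear $\W$-action on $\Sph(\nu_i \oplus \varepsilon^k)$ via $\rho$ has all mirrors meeting in $\Sph(\nu_i)$, with normal representation exactly $\rho$. Hence the model chamber $\Sph(\nu_i \oplus \varepsilon^k)/\W$ carries precisely the same book description as $C$: binding $\Sph(\nu_i) = B$, pages $\Disc(\nu_i) = \hat F_i$, parametrized by the simplex $\Delta_s^{k-1}$. The soul construction in the proof of Lemma \ref{book chamber} then provides a canonical, face-label-preserving diffeomorphism $\phi : C \to \Sph(\nu_i \oplus \varepsilon^k)/\W$, carrying binding to binding, page to page, and each face $F_j \subset C$ to the face of the model chamber associated with the same generating reflection.

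Next, I would extend $\phi$ to a $\W$-equivariant homeomorphism $\Phi : M \to \Sph(\nu_i \oplus \varepsilon^k)$ by declaring $\Phi(w x) = w \cdot \phi(x)$ for $w \in \W$ and $x \in C$. Since the Coxeter action is simply transitive on chambers, and $\phi$ preserves the face labels, this formula is well-defined (reflections in $\W$ act compatibly across corresponding faces on both sides). To promote $\Phi$ to a diffeomorphism, one invokes the tangency of the vector field built in the proof of Lemma \ref{book chamber} to every face $F_j$; its equivariant extension is therefore smooth across $F_j$, and the corresponding trivializations on $M$ match those on the model.

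The main obstacle is verifying smoothness of $\Phi$ across the boundary strata of the chambers. This ultimately reduces to Remark \ref{chamber}: the normal bundles to the strata $\hat F_J$ are flat and trivial, spanned by parallel normal fields, and these trivializations coincide with the linear ones on the model side. Once smoothness is in hand, $\Phi$ is the sought $\W$-equivariant diffeomorphism, presenting $M$ as the open book with binding $B = \Sph(\nu_i)$ and pages $\Disc(\nu_i)$ parametrized by $\Sph^{k-1}$, proving the theorem.
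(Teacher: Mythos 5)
Your overall architecture---identify the chamber $C$ with the model chamber $\Sph(\nu_i\oplus\varepsilon^k)/\W$ via Lemma \ref{book chamber} and then transfer to $M$ using simple transitivity of $\W$ on chambers---is the same reduction the paper makes (it deduces the theorem from the fact that ``all constructions above can be carried over to $M$ equivariantly,'' the model carrying the identical structure; note also that to quote Lemma \ref{book chamber} for the model you need the invariant nonnegatively curved metric of Remark \ref{converse}, though the model's chamber structure can be seen directly). However, the step you yourself single out as the main obstacle is exactly where your justification fails. Extending a face-label-preserving chamber diffeomorphism $\phi$ by $\Phi(wx)=w\phi(x)$ gives a well-defined equivariant homeomorphism, but tangency of the gradient-like vector field to the faces does not make the resulting extension smooth across the mirrors. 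For an object defined on $C$ to extend smoothly and $\W$-equivariantly, it must near each face be the restriction of a $\W$-invariant smooth object on $M$: for a vector field this means the normal component is odd and the tangential part even in the normal coordinate, not merely that the normal component vanishes on the face. Already for $\Z_2$ acting on $\R$ by $x\mapsto -x$ with chamber $[0,\infty)$, the field $(x+x^2)\partial_x$ is tangent to the face, yet its equivariant extension $(x+x|x|)\partial_x$ is $C^1$ and not $C^2$; the map of $\R$ obtained by doubling the associated chamber identification is likewise not smooth at the mirror. Similarly, the flatness and triviality of the normal bundles to the strata (Remark \ref{chamber}) supply the bundle data, but they do not guarantee that your particular $\phi$ respects the invariant normal-exponential collars to all orders, which is what smooth doubling requires.

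The repair is the route the paper actually takes: carry out the soul (Sharafutdinov) constructions and build the gradient-like vector field $\W$-invariantly on $M$ itself---for instance on the residues $\W_{\hat{F}_i}C$, which are open invariant subsets of $M$---using invariant partitions of unity or averaging over the finite isotropy groups, and observe that $\Sph(\nu\oplus\varepsilon^k)$ with its linear $\W$-action carries the same invariant data; the identification is then equivariant and smooth by construction, with no doubling argument needed. Alternatively you may keep the chamber-by-chamber scheme, but you must first normalize $\phi$ near $B$, near each face, and near the lower strata so that it intertwines the invariant normal-exponential collars (i.e., is a product map in those collars) before extending by equivariance; as written, that normalization is the missing step, and without it the claim that $\Phi$ is a diffeomorphism does not follow.
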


\begin{rem}[Converse]\label{converse} We point out that conversely we can construct a $\W$ invariant metric with nonnegative curvature on a manifold with these data. To do this we use the open book description of $\Bbb S(\nu \oplus \varepsilon ^{k})=\Sph (\nu )\times \Bbb D^k \cup
_\partial \Bbb D(\nu )\times \Sph^{k-1}$, where $\Sph (\nu
)\times \Bbb D^k $ is a small tubular neighborhood of $B= \Bbb S(\nu ) \subset \Bbb S(\nu \oplus \varepsilon ^{k})$, and the $\W$-action can be written as the gluing of the linear actions on each piece.  By \cite{Gu} we can modify the metric on $\nu$ so that it is product near infinity.
We take product metrics on $\Bbb D(\nu) \times \Bbb S^{k-1}$ and on $\Sph (\nu)\times \Bbb D^k$, where the metric on $\Bbb D^k$ is also a product near the boundary. The desired claim follows.
\end{rem}

\begin{rem}
Note that $M$ is a sphere, if the soul $S_i$ is a point, and in this case the action is a suspension or iterated suspension of the irreducible normal sphere action by $\W$.

Also, $B$ can have at most two components, and if it does, $\nu$ is a trivial line bundle, $S_i$ is isometric to $B$, and $M = \S_i\times \Sph^k$. If in this case, $S_i$ is not a point, $M$ is actually metrically a product, and the action on the second factor is the suspension of the normal sphere action by $\W$, and the metric is invariant with nonnegative curvature (cf. remark about heavens below). In particular, the orbit space $M/\W = B \times (\Sph^k/\W)$ splits, but this is the only decomposable case where $n$ faces meet and the normal action to the binding is irreducible.
\end{rem}

\begin{rem}[Spherical heavens]\label{heavens}
In the situation of the open book theorem any two souls $S_i$ and $S_j$ obviously have the same homotopy type, namely that of $C$. In fact, since the corresponding Sharafutdinov maps are distance non-increasing deformation retractions of $C$ and the souls are closed manifolds, it in fact follows as in \cite{Yi1} that they are isometric.

 When the souls are not points, it turns out that in fact much more structure compatible with the open book description is present. This is because there is a large family of choices for "Sharafutdinov retractions". To explain this, note that for any $J$-tuple of nonnegative numbers $a_J = \{a_i \ge 0\}_{i\in J}$ the subset
\begin{center}
$C^{a_J} = \{p \in C \ | \ \text{dist}(p, F_i) \ge a_i, i\in J\}$
\end{center}

\no is convex when non-empty. Clearly, this set can be obtained from $C$ by applying various "partial Sharafutdinov retractions". Moreover, when non-collapsed, i.e., dim$C^{a_J } = n$ one can consider any union or intersection of its $k$ faces as in the case of $C$ where $a_J$ is the trivial $J$ tuple. For such non-collapsed convex sets $C^{a_J}$ we can utilize further Sharafutdinov retractions associated to any face or union of faces of it. It follows, that all souls obtained in this fashion are isometric. Even more, the arguments of \cite{Yi2} carry over to our case verbatim since they pivot only around distance non increasing deformation retractions on convex subsets of a Riemannian manifold. As a result, $M$ contains a totally geodesic \emph{spherical heaven}, $\mathcal{H}$ of pseudo souls isometric to the product of $S$ with a non negatively curved metric on an $\ell$-sphere, where $\ell \ge k-1$ is the dimension of the flat trivial sub bundle of the normal bundle $S^{\perp}$ to $S$ in $M$ spanned by all parallel fields.  Here $\ell = k-1$ when $\nu$ has a unique soul, in which case, the heaven $\mathcal{H} = S \times \Sph^{k-1}$ provides a canonical $\W$-invariant ``edge" of the open book opposite its binding. When $\ell > k-1$, $\nu = \nu_0 \oplus \varepsilon^{\ell-k+1}$ and the heaven intersects the binding in a product of $S$ with a nonnegativey curved metric on an $\ell-k$ sphere, and in this case $M = \Sph( \nu_0 \oplus \varepsilon^{\ell-k+1} \oplus  \varepsilon ^{k}) =  \Sph( \nu_0 \oplus \varepsilon^{\ell+1})$ of course also has an open book structure with binding the sphere bundle of $\nu_0$, and pages the disc bundle of $\nu_0$ parametrized by an $\ell$-sphere. In the latter description, however we do not know if $\nu_0$ supports a non negatively curved metric.
\end{rem}

\smallskip

It remains to consider, the situation where the action by $\W$ on the normal spaces to $B$ is reducible. In this case, $\W=\W_1\times \cdots \times
\W_\ell$, acts in a component-wise fashion on the normal sphere $\Sph^\perp =\Sph (\Bbb
R^{k_1})*\cdots *\Sph(\Bbb R^{k_\ell})$. We point out here that in our formulation below, the component of  the $\W_i$ action is not necessarily required to be irreducible.

Although we are primarily interested in the indecomposable case, we point out that the product $(M_1\times \ldots \times M_{\ell}, \W_1\times \ldots \times \W_{\ell})$ of any $\ell$ irreducible non maximal indecomposable actions $(M_i,\W_i)$, $i = 1, \ldots, \ell$ provides a decomposable example where all mirrors meet. Our description below will include this.

Before, formulating our result, we elaborate further on the notion of an \emph{iterated open book}, which is based on having leaves being manifolds with corners:

\no Suppose for example, $P$ is a manifold with corners, the most ``singular" having local type $\R^{n-k}\times \R_+^k$. An $\ell_1$ dimensional open book with pages $P$ and binding a manifold with corners of type $\R^{n-k}\times \R_+^{k-1}$ will then be a manifold with corners of type $\R^{n-k+\ell_1}\times \R_+^{k-1}$, i.e., having decreased the corner type by one. This way, $k$ open book iterations results in a manifold without corners. Note that a  $k$-fold iterated open book $N$ has a \emph{page map} $L = (L_1, \ldots, L_k) : N \to \Disc^{m_1} \times \ldots \times \Disc^{m_k}$ with $k$ coordinates, where a page is of the form $L^{-1}(I^k)$, where each factor $I$ is a radial line in the corresponding disc. We will refer to $L^{-1}(0, \ldots, 0)$ as the \emph{pivot binding} of the iterated book.

A special case of this arises as described in the model examples of the introduction:

Given $\ell$ linear representations $\rho_i$ of finite Coxeter groups $\W_i$ on $\R^{k_i}$, and $\ell$ smooth vector bundles $\nu_i$ with base $S$. The obvious $\W = \W_1\times \ldots\times\W_{\ell}$ action  on the product of the bundles $\nu_i \oplus \varepsilon^{k_i}$ induces a $\W$ action on the \emph{fiber product}, $M_{\bar{\rho}, \bar{\nu}} := \Sph(\bar{\nu},\bar{\rho})$ of the sphere bundles $\Sph(\nu_i \oplus \varepsilon^{k_i})$, i.e., the pull back  by the diagonal map $\Delta: S \to S \times \ldots \times S$ of the product of the sphere bundles $\Sph(\nu_i \oplus \varepsilon^{k_i})$. As in the case of a single representation and bundle as above, there is a canonically associated $\W$ equivariant page map $L : M_{\bar{\rho}, \bar{\nu}} \to \Disc^{k_1}\times \dots \times \Disc^{k_{\ell}}$ where $B = L^{-1}(0, \ldots,0)$ is the intersection of all mirrors for $\W$, and $P = L^{-1}([0,1]x_1, \ldots, [0,1]x_{\ell})$ for any $\bar{x} \in \Sph^{k_1-1} \times \ldots \times \Sph^{k_{\ell}-1}$ is a manifold with corners diffeomorphic to the fiber product $\Disc(\bar{\nu})$ of the disc bundles $\Disc({\nu_i})$.

\begin{thm} [Iterated open book]\label{iterated open book}
 Let $M$ be a  compact nonnegatively curved Coxeter $\W$-manifold where all mirrors meet. Then there is a splitting (allowing one factor) of the normal slice representation splits as $\bar{\rho} = \rho_1 \times \ldots \times \rho_{\ell}$ on $\Sph ^\perp =\Sph (\Bbb R^{k_1})*\cdots *\Sph(\Bbb R^{k_\ell})$, such that  $M$ is $\W$-equivariantly diffeomorphic to a fiber product, $M_{\bar{\rho}, \bar{\nu}} := \Sph(\bar{\nu},\bar{\rho})$. Moreover,  the fiber product of any of  $\nu _1, \cdots, \nu _\ell$ mutually orthogonal sub bundles, is a totally geodesic subbundle of the sum of all of them, a vector bundle with non-negative sectional curvature over a soul $S$ of the chamber $C$.

Alternatively, $M$ is an iterated open book with pivot binding $B$ and page a non negatively curved fiber product $\Disc(\bar{\nu})$ with ortogonal totally geodesic subbundles $\Disc(\nu _J)$, $J \subset \{1, \ldots, \ell\}$ with right angles at all corners along its totally geodesic  boundary strata.
\end{thm}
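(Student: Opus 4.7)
The plan is to extend Theorem \ref{open book} to the reducible case by induction on the number $\ell$ of irreducible factors of the normal slice representation. Fix a point $p \in B = M^{\W}$ and write the slice representation $\bar{\rho}$ of $\W$ on $\R^k$ as a product of irreducibles $\bar{\rho} = \rho_1 \times \cdots \times \rho_\ell$. This induces a product decomposition $\W = \W_1 \times \cdots \times \W_\ell$ (with $\W_j$ acting faithfully on $\R^{k_j}$) and a partition of the face labels $I = I_1 \sqcup \cdots \sqcup I_\ell$ with $|I_j| = k_j$. Coarser factorizations permitted in the theorem statement are recovered simply by regrouping factors.

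The first task is to identify a canonical common soul $S \subset B$ of the chamber $C$ together with the orthogonal subbundles $\nu_j$ over $S$. Since $\text{dist}(\partial C,\cdot)$ is concave on $C$, its maximum set $S$ is a totally geodesic submanifold, the soul of $C$. Iterating the Reduction Lemma \ref{reduction}, first on the $\W_1$-residue of $C$, then on the $\W_2$-residue within that, and so on, forces $S \subset B$. For each $j$, the stratum $\hat{F}_{I_j} = \bigcap_{i \in I - I_j} F_i$ of $C$ coincides with the intersection of $C$ with the totally geodesic fixed set of the complementary factor $\prod_{j' \neq j}\W_{j'}$, so only $\W_j$ acts non-trivially on its normal slice. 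Define $\nu_j$ as the normal bundle of $S$ inside $\hat{F}_{I_j}$: these are orthogonal subbundles of the normal bundle of $S$ in $B$, their direct sum $\bar{\nu} = \nu_1 \oplus \cdots \oplus \nu_\ell$ exhausts it by the product structure of $\bar{\rho}$, and each $\nu_j$ inherits nonnegative sectional curvature as a totally geodesic subbundle of the normal bundle of a soul.

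The equivariant diffeomorphism $M \cong M_{\bar{\rho}, \bar{\nu}}$ is then built by induction on $\ell$. The base case $\ell = 1$ is Theorem \ref{open book}. For the inductive step, let $M_0$ be the component of the totally geodesic fixed set $M^{\W_\ell}$ containing $B$; this is $\W'$-invariant for $\W' = \W_1 \times \cdots \times \W_{\ell-1}$, and $(M_0, \W')$ satisfies the hypotheses with $\ell - 1$ factors. By induction, $M_0$ is $\W'$-equivariantly diffeomorphic to the fiber product of the sphere bundles $\Sph(\nu_j \oplus \varepsilon^{k_j})$, $j = 1, \ldots, \ell-1$, over $S$. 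Next, apply Theorem \ref{open book} to $(M, \W_\ell)$, whose binding is $M_0$ (every $\W_\ell$-mirror contains $B \subset M_0$) and whose irreducible slice representation is $\rho_\ell$: this yields $M \cong \Sph(\eta_\ell \oplus \varepsilon^{k_\ell})$ for a nonnegatively curved bundle $\eta_\ell$ over $M_0$. Since $\eta_\ell$ restricts to $\nu_\ell$ along $S$ and $M_0$ retracts $\W$-equivariantly onto $S$ along the Sharafutdinov flow, $\eta_\ell$ is the pullback of $\nu_\ell$ along $M_0 \to S$. Combining with the inductive fiber product description of $M_0$ then produces the global equivariant diffeomorphism $M \cong M_{\bar{\rho}, \bar{\nu}}$.

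The remaining claims follow from $\W$-equivariance. For each subset $J \subset \{1, \ldots, \ell\}$, the sub-fiber-product $\Sph(\bar{\nu}_J, \bar{\rho}_J)$ coincides with the fixed set of $\prod_{j \notin J}\W_j$ in $M$, and is therefore totally geodesic. The alternative iterated open book description comes directly from the canonical $\W$-equivariant page map $L : M \to \Disc^{k_1} \times \cdots \times \Disc^{k_\ell}$ that is inherent in the fiber product construction, with pivot binding $B = L^{-1}(0,\ldots,0)$ and pages $\Disc(\bar{\nu})$; the page inherits corners from the chamber structure of $C$ via Remark \ref{chamber}, and its totally geodesic boundary strata meet at right angles because distinct factors of the product slice representation $\bar{\rho}$ are mutually orthogonal. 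The main obstacle in this program is the first step: establishing the canonical pivot soul $S \subset B$ and the $\W$-equivariant orthogonal splitting $\bar{\nu} = \nu_1 \oplus \cdots \oplus \nu_\ell$. This requires iterated application of the Reduction Lemma \ref{reduction} together with careful verification that the resulting soul $S$ and its normal bundle decomposition are independent of the order in which the factors $\W_j$ are peeled off.
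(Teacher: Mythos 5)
Your inductive scheme contains two genuine errors, both concentrated in the places you yourself flag as delicate. First, the claim that the soul $S$ of $C$ lies in $B$ is false, and the Reduction Lemma \ref{reduction} cannot force it: that lemma places the soul $S_i$ of $\mathrm{dist}(F_i,\cdot)$ inside $\hat{F}_i$, the intersection of the \emph{other} faces, never inside $B=\bigcap_{i}F_i$. Indeed $S_i$ is at maximal distance from $F_i$, while $B\subset F_i$, so $S_i\subset B$ is impossible except in degenerate cases; already in the model $\Sph(\nu\oplus\varepsilon^{k})$ the soul of the chamber is the zero section sitting inside a page $\Disc(\nu)=\hat{F}_i$ and is disjoint from the binding $B=\Sph(\nu)$ whenever $\nu$ has positive rank. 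Consequently your definition of $\nu_j$ as ``the normal bundle of $S$ inside $\hat{F}_{I_j}$'' is not set up correctly (and has the wrong rank in the model), so the orthogonal splitting $\bar{\nu}=\nu_1\oplus\cdots\oplus\nu_\ell$ over a common base is not established by this step.

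Second, the inductive step misuses Theorem \ref{open book} and mis-describes $M_0=M^{\W_\ell}$. Theorem \ref{open book} applied to $(M,\W_\ell)$ yields $M\cong\Sph(\nu\oplus\varepsilon^{k_\ell})$ where $\nu$ is a bundle over the soul of the $\W_\ell$-chamber whose \emph{sphere bundle is} the binding $M_0$; it does not produce a bundle $\eta_\ell$ \emph{over} $M_0$ with $M\cong\Sph(\eta_\ell\oplus\varepsilon^{k_\ell})$ (a dimension count forces such an $\eta_\ell$ to have rank one, and e.g.\ $M=\Sph^n$ with binding $\Sph^{n-k}$ is not $\Sph^{n-k}\times\Sph^{k}$). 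Likewise, the induction hypothesis applied to $(M_0,\W')$ describes $M_0$ as a fiber product over a soul of \emph{its} chamber, which in the model is built from $\Sph(\nu_\ell)$, not from $S$: in the model $M_0$ is the fiber product of $\Sph(\nu_j\oplus\varepsilon^{k_j})$, $j<\ell$, \emph{and} $\Sph(\nu_\ell)$ over $S$, of dimension $n-k_\ell$, whereas the space you claim it equals has dimension $n-k_\ell-(\mathrm{rank}\,\nu_\ell-1)$. So the key gluing assertion ``$\eta_\ell$ is the pullback of $\nu_\ell$ along $M_0\to S$'' rests on two misidentifications. The paper's proof avoids both problems by inducting on the \emph{base} rather than on the fixed set: it applies Theorem \ref{open book} to the $\W_1$-action on all of $M$, observes that $\Sph(\nu_1\oplus\varepsilon^{k_1})\to S_1$ is a $\W_2\times\cdots\times\W_\ell$-equivariant bundle (splitting off any factor acting trivially on $S_1$ as a linear action on the fiber, which enlarges the trivial summand), and then applies the induction hypothesis to the soul $S_1$ with the remaining factors. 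Your approach could be repaired along the same lines by inducting on the soul of the $\W_\ell$-chamber (the base of the $\W_\ell$-open book) instead of on $M_0$, but as written the step fails.
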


\begin{proof}
For a $\W$-invariant decomposition of $\Sph^\perp$, we apply Theorem \ref{open book} to the $\W_1$-action on $M$. It
follows that $M$ is $\W_1$-equivariantly diffeomorphic to a sphere
bundle $\Sph (\nu _1\oplus  \varepsilon ^{k_1})$ over $S_1$, where $S_1$  is the soul of a chamber, $C_1$ for the $\W_1$-action on $M$. As seen in the proof of Lemma 3.2, $\Disc (\nu _1)$ can be taken to be any of the strata $\hat{F}_i^1$ in $C_1$, and the fixed point set of the $\W_1$ action, i.e., the intersection of all $\W_1$ mirrors $\Lambda_i^1$, is the subbundle $\Sph(\nu_1) =: B_1 = \bigcap_i \Lambda_i^1$. Note that, since $\W_j$, $j\ne 1$ fixes  the normal spaces to $B_1$ along $B = M^{\W}$,  the totally geodesic sub manifold $\hat{\Lambda}_i^1 \subset M$ (the double of $\hat{F}_i^1$) is invariant under $\W_2 \times \ldots \times \W_{\ell}$ for any $i$. In addition, since  $\hat{\Lambda}_i^1 = \Sph(\nu _1\oplus  \varepsilon ^1)$, we are in position to complete the proof by induction.

Specifically, we note that  $\W_2$ acts on $M=\Sph (\nu _1\oplus \varepsilon ^{k_1})
\searrow S_1$ in a fiber preserving fashion commuting with the
$\W_1$-action. If $\W_2$ acts trivially on the base, then $\W_2$
acts linearly along the fiber, hence $\nu_1=\varepsilon ^{k_2}\oplus
\nu_1'$ and $M =\Sph (\nu _1'\oplus \varepsilon ^{k_1} \oplus \varepsilon ^{k_2})
\searrow S_1$ with its $\W_1\times \W_2$ action. Therefore, we may assume that, the action of each factor
$\W_i$, $i\ge 2$, is nontrivial on $S_1$, hence $\Sph (\nu _1\oplus
\varepsilon ^{k_1}) \searrow S_1$ is an equivariant $\W_2\times
\cdots \times \W_\ell$ bundle. By induction we may assume that the
soul $S_1$ is $\W_2\times \cdots \times
\W_\ell$ equivariantly diffeomorphic to a fiber product  $\Sph(\hat{\nu}_1,\hat{\rho}_1)$ of Sphere bundles  $\Sph(\nu_i \oplus \varepsilon^{k_i})$, $i\ne 1$
 over a totally geodesic submanifold $S \subset S_1$, where $S$ is the soul of a chamber of the $\W_2\times \cdots\times \W_\ell$  action on
$S_1$. In particular,  the orbit space of the $\W_2\times \cdots \times
\W_\ell$ action on $S_1$ is the fiber product of the chambers in the disk bundles
$\Bbb D(\nu _2\oplus \varepsilon ^{k_2-1})$, $ \cdots , \Bbb D(\nu
_\ell \oplus \varepsilon ^{k_\ell -1})$. Therefore, the orbit space
of the $\W$-action on $M$ is the fiber product of chambers of $\Bbb
D(\nu _1\oplus \varepsilon ^{k_1-1}), \Bbb D(\nu _2\oplus
\varepsilon ^{k_2-1})$, $ \cdots , \Bbb D(\nu _\ell \oplus
\varepsilon ^{k_\ell -1})$, where the double of $\Bbb
D(\nu _1\oplus \varepsilon ^{k_1-1})$ is  the restriction of the sphere bundle $\Sph (\nu _1\oplus \varepsilon ^{k_1}) $ to $S$. It follows that $M$ is $\W$
equivariantly diffeomorphic to the fiber product  $\Sph(\bar{\nu},\bar{\rho})$ of $\Sph(\nu _i\oplus
\varepsilon ^{k_i})$ over  $S$.
\end{proof}

\begin{rem}
We leave the  details of the proof of the  (equivalent) iterated open book statement to the reader. Here, rather than using the induction hypothesis on the soul $S_1$, one uses it on the whole $\W_2\times
\cdots \times \W_\ell$ invariant page  $\Disc(\nu_1)$. We also point out that each irreducible sub action gives rise to a coordinate page map for an open book decomposition as in Theorem \ref{open book}. All together one gets a $\W$ equivariant page map $F: M \to \Disc^{k_1} \times \ldots \times \Disc^{k_{\ell}}$ with pages as claimed. As in the case of the open book description, one gets even more geometric structure when the normal $\W$ action is reducible. For example, one gets several heavens $\mathcal{H}_i$ corresponding to the $\W_i$ sub-actions, and their (orthogonal) intersections as totally geodesic submanifolds of $M$.
\end{rem}

\no We note that

$\bullet$ The chamber $C$ is a bundle over the soul $S$ with fiber the product  $\Sph^{i_1}*\Delta_1\times \cdots \times \Sph^{i_\ell}*\Delta_\ell$.

\begin{rem}[Reconstruction]\label{reconstruction}
As in the remark \ref{converse}, the non-negatively curved metric on $M$ can be constructed from a $\W_2\times \cdots \times \W_\ell$-invariant complete metric of non-negative curvature on the vector bundle $\nu_1$ over $S_1$, by modifying the metric near infinity (cf.  \cite{Gu}) in a $\W_2\times \cdots \times \W_\ell$ invariant fashion.

 Alternatively one can use the iterated open book description to achieve this as soon as the nonnegatively curved page metrics have been modified so as to be product metrics along the boundary and its corners. This again is done inductively using \cite{Gu} combined with the information that say the disc bundles $\Disc(\nu_1)$ and the fiber product of the remaining disc bundles $\Disc(\hat {\nu}_1)$ are orthogonal totally geodesic sub bundles of the $\Disc(\bar{\nu})$, so that either one of these manifolds with corners can be used a soul of the page.
\end{rem}

Prompted by the structure emerged in this section, we raise the following questions:

\begin{problem}
Are there obstructions for the sum / quotient of two non negatively curved bundles with common soul to have nonnegative curvature?

\end{problem}

\section{Metric rigidity: Non compact universal cover} 

Our main goal in this section is to derive rigidity properties for nonnegatively curved manifolds $M$ having noncompact universal cover and supporting a cocompact reflection group. In particular, we will see that the action is indecomposable if and only if $M$ is flat with Coxeter chamber $C'$ a euclidean simplex. Moreover, in this case $M$ is either a flat torus or flat eucidean space.

We begin with the case where $M$ itself is non-compact (and complete).

By the Cheeger - Gromoll soul theorem such a manifold contains a metrically embedded, totally convex compact submanifold $S$ (a soul of $M$) whose normal bundle is diffeomorphic to $M$. Moreover, by Corollary 6.2 in \cite{CG}, $M$ splits uniquely as a product $\bar{M} \times \R^k$, where the isometry group $I(\bar{M})$ of $\bar{M}$ is compact and $I(M) = I(\bar{M}) \times I(\R^k)$. Thus in the presence of a cocompact isometric action their work immediate yields

\begin{thm}[Strong Splitting] \label{splitting}Assume $M$ is a complete open manifold of nonnegative curvature with a cocompact isometric group action. Then $M$ is isometric to a metric product ${\Bbb R}^k\times
S$, where $S$ is a soul of $M$.
\end{thm}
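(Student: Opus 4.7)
The plan is to invoke the Cheeger--Gromoll isometry splitting theorem (Corollary 6.2 of \cite{CG}, cited just before the statement), and then exploit the cocompact action to force the factor with compact isometry group to itself be compact.

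First, that splitting gives a unique isometric decomposition $M = \bar{M} \times \R^k$ such that $I(\bar{M})$ is compact and $I(M) = I(\bar{M}) \times I(\R^k)$. Let $G \subset I(M)$ denote the cocompact subgroup. Using this product structure of $I(M)$, every $g \in G$ has well-defined factor projections $p_1(g) \in I(\bar{M})$ and $p_2(g) \in I(\R^k)$, and the $G$-action on $M$ is literally the product of the two factor actions on $\bar{M}$ and on $\R^k$.

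Next, I would fix a compact set $K \subset M$ with $GK = M$. Projecting to the first factor yields $\bar{M} = p_1(G) \cdot p_1(K)$. Since $I(\bar{M})$ is compact, the subgroup $p_1(G)$ has compact closure there, so $p_1(G) \cdot p_1(K)$ is relatively compact. Hence $\bar{M}$ is itself compact, and because $M$ is non-compact the Euclidean factor satisfies $k \geq 1$.

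Finally, I would identify $\bar{M}$ with a soul $S$ of $M$. In the product $\bar{M} \times \R^k$ the slice $\bar{M} \times \{0\}$ is a compact, totally geodesic, totally convex submanifold whose normal bundle is the trivial $\R^k$-bundle; by the standard characterization/uniqueness (up to isometry) of souls in a complete open manifold of nonnegative curvature, $\bar{M}$ is isometric to any soul $S$ of $M$, giving $M = \R^k \times S$. The genuine difficulty is packaged into Cheeger--Gromoll's Corollary 6.2; once that is granted, the only substantive step is the compactness deduction for $\bar{M}$ from cocompactness of the $G$-action, which is immediate from the product form of the action.
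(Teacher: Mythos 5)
Your proposal is correct and follows the same route as the paper: the paper also simply invokes Corollary 6.2 of Cheeger--Gromoll (the splitting $M=\bar M\times\R^k$ with $I(M)=I(\bar M)\times I(\R^k)$ and $I(\bar M)$ compact) and declares the conclusion immediate in the presence of a cocompact action. Your compactness deduction for $\bar M$ and the identification of $\bar M\times\{0\}$ with a soul are exactly the routine details the paper leaves implicit.
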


In particular,

\begin{cor}[Noncompact Indecomposible]
A complete open manifold $M$ with nonnegative curvature and
cocompact reflection group $\W$ is indecomposable if and only if $M$
is isometric to flat euclidean $\R^n$ and $\W$ is an affine Coxeter
group with chamber $C = M/\W$ a euclidean
$n$-simplex.
\end{cor}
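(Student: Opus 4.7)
My plan is to prove the two directions separately; the ``only if'' direction requires essentially all the work and proceeds by the following strategy.

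Assume $M$ is indecomposable. I first invoke the Strong Splitting Theorem just established, writing $M = S \times \R^k$ isometrically with $S$ a compact soul of $M$ and $k \geq 1$ (since $M$ is noncompact). The decisive step is to show that $\W$ respects this splitting: since $I(M) = I(S) \times I(\R^k)$, every element of $\W$ decomposes as $(w_1,w_2)$, and the fixed set of a reflection is $\Fix(w_1) \times \Fix(w_2)$; a codimension-one component of such a product forces one of $w_1, w_2$ to be the identity and the other to be a reflection of its factor. Because $\W$ is generated by reflections, this yields a product decomposition $\W = \W_S \times \W_{\R^k}$ into cocompact reflection groups, so that $M/\W = S/\W_S \times \R^k/\W_{\R^k}$ metrically splits. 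Indecomposability then forces $S/\W_S$ (hence $S$) to be a point, so $M = \R^n$ is flat and $\W$ is a cocompact discrete reflection group of Euclidean space, hence an affine Coxeter group. Applying the same product-splitting argument to the decomposition of $\W$ into irreducible affine Coxeter factors acting on orthogonal invariant subspaces of $\R^n$, indecomposability finally forces a single irreducible factor, whose chamber is a Euclidean $n$-simplex.

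For the ``if'' direction, if $M = \R^n$ with $\W$ an affine Coxeter group whose chamber is a Euclidean $n$-simplex $\Delta$, I rule out decomposability directly: a nontrivial metric product of bounded convex polytopes of dimensions $p, q \geq 1$ has at least $(p+1)+(q+1) = n+2$ codimension-one faces, exceeding the $n+1$ faces of $\Delta$, and the tangent cones at the vertices of $\Delta$ are non-product simplicial cones, so no finite metric quotient of such a product can be isometric to $\Delta$. Hence the action is indecomposable.

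The principal obstacle is the product decomposition $\W = \W_S \times \W_{\R^k}$ across the canonical metric splitting of $M$; one needs the reflection hypothesis (rather than a general isometric action) to upgrade the Cheeger--Gromoll splitting of $I(M)$ to a splitting of $\W$ itself. Once this is in hand, the remainder reduces to the standard structure theory of affine Coxeter groups on $\R^n$ together with the elementary face geometry of simplices.
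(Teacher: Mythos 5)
Your ``only if'' direction is correct and follows essentially the paper's route: the Strong Splitting Theorem gives $M=S\times\R^k$ with $k\ge 1$, and then one invokes the classical theory of cocompact Euclidean reflection groups. The one step you make explicit that the paper leaves implicit --- that every reflection $(w_1,w_2)\in\W\subset I(S)\times I(\R^k)$ has $w_1=\id$ or $w_2=\id$, since a component of $\Fix(w_1)\times\Fix(w_2)$ can have codimension one only if one factor acts trivially, whence $\W=\W_S\times\W_{\R^k}$ and $M/\W=S/\W_S\times\R^k/\W_{\R^k}$ --- is correct and is a worthwhile addition; from there, indecomposability forcing $S$ to be a point and the reduction to a single irreducible affine factor with simplex alcove is exactly the paper's argument.

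The ``if'' direction, however, contains a genuine gap. The step ``the tangent cones at the vertices of $\Delta$ are non-product simplicial cones, so no finite metric quotient of such a product can be isometric to $\Delta$'' is a non sequitur: a finite quotient of a nontrivial product cone can very well be a non-product simplicial cone. In fact at every vertex of a Euclidean Coxeter simplex the tangent cone is $\R^n/\W_0$ for a finite Coxeter group $\W_0$, hence \emph{is} a finite quotient of the nontrivial product $\R^p\times\R^q$; for instance the $\pi/3$-sector at a vertex of the $\tilde\A_2$ alcove is $\R\times\R$ modulo a dihedral group of order six. So the local obstruction you invoke does not exist, and the face count only rules out $\Delta$ being literally a product of polytopes, not a finite quotient of a product. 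Worse, read literally the claim you are trying to prove is delicate: the alcove of $\tilde\CC_2$, a $45$--$45$--$90$ triangle, \emph{is} the quotient of the metric product $I\times I$ (a square) by the $\Z_2$ folding in a diagonal, so no purely metric argument about the simplex alone can succeed for all affine types. The converse has to be read as the paper intends it --- decomposability as a splitting of the Coxeter chamber itself (as in the W\"orner decomposition \ref{base split}, equivalently a $\W$-compatible splitting, or a quotient by a group acting freely as in the construction of the Coxeter cover) --- in which case it reduces to the classical fact, asserted in the paper via ``indecomposable, or in this case equivalently irreducible,'' that the chamber of an irreducible cocompact affine Coxeter group is a simplex and not a nontrivial product. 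Your argument as written neither establishes the literal claim nor addresses this point.
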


Here the last claim follows from the fact, that the factors in \ref{base split} all must be euclidean simplices for any cocompact Coxeter action on $\R^k$, and that a co-compact Affine Coxeter group has orbit space a simplex if it is indecomposable, or in this case equivalently irreducible.

\smallskip

Also, for $M$ compact with infinite fundamental group we get

\begin{prop}
Let $M$ be a compact non negatively curved manifold with infinite fundamental group and reflection group $\W$. Then, the action is decomposable unless $M$ is flat.
\end{prop}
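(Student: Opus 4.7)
The plan is to run an analog of the noncompact case just established, applying the Cheeger--Gromoll splitting to the universal cover $\tilde M$ instead of to $M$ itself. Since $M$ is compact with $\sec \ge 0$ and $|\pi_1(M)| = \infty$, its universal cover splits canonically and isometrically as $\tilde M = \R^k \times N$ with $k \ge 1$, where $N$ is compact, simply connected and of nonnegative curvature. By Corollary~6.2 of \cite{CG}, every isometry of $\tilde M$ preserves this product, so
\[
\Iso(\tilde M) \;=\; \Iso(\R^k) \times \Iso(N),
\]
and the full lift $\tilde \W$ of $\W$ to $\tilde M$ consists of pairs $(g_1, g_2)$. Assume now that $M$ is not flat, equivalently that $\dim N \ge 1$.

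The reflection subgroup $\hat \W \lhd \tilde \W$ then splits as a direct product. If $r = (r_1, r_2) \in \hat\W$ is a reflection, the equality $\Fix(r) = \Fix(r_1) \times \Fix(r_2)$ together with the codimension-one requirement forces exactly one of $r_1, r_2$ to be the identity while the other is a reflection on its factor. Since such ``horizontal'' and ``vertical'' reflections commute pairwise, one obtains $\hat\W = \hat\W_{\R} \times \hat\W_N$, with $\hat\W_{\R}$ a reflection group on $\R^k$ and $\hat\W_N$ a reflection group on $N$. Consequently the chamber
\[
\tilde C := \tilde M / \hat\W \;=\; (\R^k/\hat\W_{\R}) \times (N/\hat\W_N)
\]
is a metric product whose two factors both have positive dimension.

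By the corollary preceding \pref{coxeter} one has $M/\W = \tilde M/\tilde\W = \tilde C/\Gamma$, where $\Gamma = \pi_1(M)/(\pi_1(M) \cap \hat\W)$ inherits an action from $\pi_1(M)$ on $\tilde M$ and therefore preserves the splitting of $\tilde C$. The Cheeger--Gromoll rigidity theorem for compact nonnegatively curved manifolds provides a finite index subgroup $\pi_1^0 \cong \Z^k$ of $\pi_1(M)$ acting as a lattice of translations on $\R^k$ and trivially on $N$; its image $\Gamma_0 \lhd \Gamma$ is then a finite index subgroup acting trivially on the second factor of $\tilde C$. Hence
\[
\tilde C/\Gamma_0 \;=\; \bigl(\R^k/(\hat\W_{\R} \cdot \Gamma_0)\bigr) \times (N/\hat\W_N)
\]
remains a nontrivial metric product, and $M/\W = (\tilde C/\Gamma_0)/(\Gamma/\Gamma_0)$ is a finite quotient of this product. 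Thus the action is decomposable, as required.

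The main obstacle is guaranteeing that both the reflection subgroup $\hat\W$ and the residual deck group $\Gamma$ respect the canonical splitting of $\tilde M$: the codimension-one condition on fixed sets of reflections handles the former, while Cheeger--Gromoll rigidity supplies a product-preserving finite-index subgroup of $\pi_1(M)$ that handles the latter. The remaining identifications are standard bookkeeping along the tower $\tilde M \to \hat M \to M$.
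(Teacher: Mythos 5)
Your opening steps coincide with the paper's own argument: the paper also splits the universal cover as $\tilde M = \R^k \times N$ via Cheeger--Gromoll(--Toponogov), observes that every reflection in $\hat{\W}$ must be purely ``horizontal'' or purely ``vertical'' (your $\Fix(r)=\Fix(r_1)\times\Fix(r_2)$ argument), deduces $\hat{\W}=\hat{\W}_{\R}\times\hat{\W}_N$, and concludes that the $\hat{\W}$-chamber splits nontrivially unless $N$ is a point --- and it stops there. (Minor quibble: only the cross-type reflections commute pairwise, but the product decomposition of $\hat{\W}$ holds anyway since the generating set lies in $\Iso(\R^k)\times\{1\}\cup\{1\}\times\Iso(N)$.)

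The extra step you add to reach the literal ``finite quotient'' conclusion is where the argument fails. The claim that Cheeger--Gromoll rigidity provides a finite-index subgroup $\pi_1^0\cong\Z^k$ of $\pi_1(M)$ acting by translations on $\R^k$ and \emph{trivially} on $N$ is false: what is true is that $\pi_1(M)$ acts by product isometries and is virtually $\Z^k$, but the $\Iso(N)$-components of its elements can have infinite order, and then no finite-index subgroup acts trivially on $N$. Concretely, take $M=(\R\times\Sph^2)/\Z$ with generator $(x,p)\mapsto (x+1, R_\alpha p)$, $R_\alpha$ a rotation of irrational angle, and $\W=\Z_2$ generated by the reflection in the equatorial plane (which commutes with $R_\alpha$ and hence descends). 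Here $\pi_1(M)=\Z$ projects to an infinite subgroup of $\Iso(\Sph^2)$, so the subgroup $\Gamma_0$ you need does not exist; moreover $M/\W$ is the mapping torus of an irrational rotation of a hemisphere, whose boundary is a flat torus with no orthogonal lattice vectors for generic $\alpha$, so this orbit space is not a finite quotient of a nontrivial metric product at all. Thus the twist by an infinite-order isometry of $N$ is a genuine obstruction, not bookkeeping: along these lines one can only conclude what the paper actually records, namely that $\tilde C=(\R^k/\hat{\W}_{\R})\times(N/\hat{\W}_N)$ splits nontrivially and $M/\W$ is its quotient by the splitting-preserving (possibly infinite) group $\Gamma$ --- the decomposability has to be read in that sense, not via a finite group as in your last paragraph.
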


\begin{proof}
From the Cheeger-Gromoll-Toponogov splitting theorem \cite{To, CG1} we know that the universal cover of $M^n$ splits isometrically as $\R^k \times N$, where $\R^k$ is flat euclidean $k$-space,
$k \ge 1$ and $N$ is a compact simply connected nonnegatively curved manifold. Since mirrors for the lifted reflection group $\hat{\W}$ contain either an $\R^k$ factor or an $N$ factor we have that $\hat {\W} = {\hat {\W}}_{ \R^k} \times  {\hat {\W}}_{N}$, yielding a nontrivial splitting for the $\hat \W$ chamber unless $N$ is a point. The desired result follows.
\end{proof}

 \medskip

Throughout the remaining part of this section $M$ is a compact flat
manifold. We start with the following simple observation, concerning actions where the Coxeter chamber $C'$ does not contain any simplex factors in \ref{base split}:

\begin{lem} [Flat open book]\label{flat book}Assume $M$ is a compact flat manifold with a Coxeter action by a reflection group $\W$. If all mirrors meet, then $\W\cong \Bbb Z_2^{k+\ell}$ and $M$ is isometric to
$N \times _{\Bbb Z_2 ^\ell} \Bbb T^\ell
\times \Bbb T^k$, where  $\Bbb Z_2 ^\ell$ acts freely on a compact
flat manifold $N$, $\Bbb Z_2^\ell \times \W \subset I(\Bbb S^1)
^{\ell +k} $ acting componentwise on $\Bbb T^{k+\ell }=\Sph^1\times
\cdots \times \Sph^1$ by reflecions.
\end{lem}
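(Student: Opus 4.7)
The plan is to apply Theorem \ref{iterated open book} and then exploit flatness, together with the non-emptiness of the binding $B = M^\W$, to pin down both $\W$ and the structure of $M$. First I would invoke that theorem to obtain an irreducible splitting $\bar{\rho} = \rho_1 \times \cdots \times \rho_r$ of the normal slice representation and a $\W$-equivariant diffeomorphism $M \cong \Sph(\bar{\nu},\bar{\rho})$ over the soul $S$ of $C$, where $\W = \W_1 \times \cdots \times \W_r$ with $\W_i$ an irreducible finite Coxeter group of rank $k_i$, and each $\Sph(\nu_i \oplus \varepsilon^{k_i})$ is a totally geodesic sub-bundle of $M$ with fiber $\Sph^{m_i+k_i-1}$ (round, from the Euclidean bundle metric). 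Since $M$ is flat each sub-bundle is flat and hence each round-sphere fiber is flat, forcing $m_i + k_i \le 2$; combined with $k_i \ge 1$ (from non-triviality of $\W_i$) this gives $m_i + k_i = 2$.

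Two sub-cases then remain: (a) $k_i = 1$, $m_i = 1$ with $\W_i \cong \Z_2$ and $\nu_i$ a line bundle, and (b) $k_i = 2$, $m_i = 0$ with $\W_i \cong I_2(p)$ irreducible on $\R^2$. In case (b), irreducibility forces $(\R^2)^{\W_i} = \{0\}$, so $\W_i$ has no fixed point on $\Sph^1$, and the binding contribution of this factor is empty. Since the hypothesis that all mirrors meet gives $M^\W \ne \emptyset$, case (b) cannot occur and every factor must be of type (a). Hence each $\W_i \cong \Z_2$ and $\W \cong \Z_2^{r}$.

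Next, each line bundle $\nu_i$, being flat over the flat manifold $S$, is classified by a homomorphism $\alpha_i \colon \pi_1(S) \to \Z_2$. I would let $\ell$ be the number of non-trivial $\alpha_i$'s and $k = r - \ell$; after reindexing and collapsing any $\Z_2$-linear dependencies (absorbing redundant factors into the trivial ones), one may assume $\alpha := (\alpha_1, \ldots, \alpha_\ell) \colon \pi_1(S) \to \Z_2^\ell$ is surjective. Then $N := \tilde{S}/\ker(\alpha)$ is a compact flat $\Z_2^\ell$-cover of $S$ with free deck action. Unwinding the fiber product of the $\Sph^1$-bundles $\Sph(\nu_i \oplus \varepsilon)$ shows that the twisted part is isometric to $N \times_{\Z_2^\ell} \T^\ell$ with $\Z_2^\ell$ acting on $\T^\ell$ componentwise by reflections (one factor for each non-trivial $\nu_i$, the deck reflection being perpendicular to the $\W$-reflection in the corresponding $\Sph^1$-fiber), while the $k$ trivial line bundles contribute a direct $\T^k$ factor. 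Hence $M \cong N \times_{\Z_2^\ell} \T^\ell \times \T^k$, and the $\W = \Z_2^{k+\ell}$ action is componentwise reflection on $\T^{k+\ell}$, commuting with the deck action and embedding $\Z_2^\ell \times \W$ inside $I(\Sph^1)^{k+\ell}$ as required.

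The hardest part is the second step, where flatness must be made to force the spherical fibers to be at most one-dimensional and, simultaneously, the binding hypothesis must exclude every irreducible rank-two piece; once these two conspire to leave only rank-one $\Z_2$-factors, the remaining structural statement reduces to standard flat-bundle bookkeeping and the identification of the fiber-product cover.
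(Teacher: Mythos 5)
Your overall route is the paper's: apply Theorem \ref{iterated open book}, use flatness to force every factor to be the circle bundle of a real line bundle over the soul, and then identify the fiber product of these circle bundles with $N\times_{\Z_2^\ell}\T^\ell\times\T^k$. Two steps, however, have genuine gaps. First, Theorem \ref{iterated open book} provides only a $\W$-equivariant \emph{diffeomorphism} $M\cong\Sph(\bar{\nu},\bar{\rho})$; its metric content concerns the disc-bundle pages over the soul, and it does not assert that the sphere fibers of $M\to S$ are round or totally geodesic in the given flat metric on $M$. So the inference ``each round-sphere fiber is flat, hence $m_i+k_i\le 2$'' is not available as stated. The correct (and easy) argument is topological, and is what the paper's one line ``the fiber must be a product of circles'' amounts to: a compact flat $M$ is aspherical, the soul $S$ is a compact totally geodesic submanifold of $M$, hence flat and aspherical, and the homotopy exact sequence of the bundle $M\to S$ with fiber $\prod_i\Sph^{m_i+k_i-1}$ gives $\pi_j(M)\cong\pi_j(\text{fiber})$ for $j\ge 2$, forcing every fiber sphere to be at most one dimensional. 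Note also that $m_i+k_i\le 2$ together with $k_i\ge 1$ only gives $m_i+k_i\in\{1,2\}$; the case $m_i=0$, $k_i=1$ must be excluded exactly as you exclude the dihedral case (a rank-zero $\nu_i$ makes $\Sph(\nu_i)$, hence the binding, empty, contradicting that all mirrors meet) --- in fact $m_i\ge 1$ is automatic because each stratum $\hat{F}_i$ is a manifold with nonempty boundary $B$.

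Second, ``collapsing $\Z_2$-linear dependencies (absorbing redundant factors into the trivial ones)'' is not legitimate: the line bundles $\nu_i$ are prescribed by the action (normal bundles of souls inside the strata), and replacing a nontrivial but dependent class by the trivial one changes $M$. For instance, over $S=\Sph^1$ with $\nu_1=\nu_2$ the M\"obius bundle, the fiber product of $\Sph(\nu_1\oplus\varepsilon)$ and $\Sph(\nu_2\oplus\varepsilon)$ is the mapping torus of the simultaneous reflection of $\T^2$, which has $H_1\cong\Z\oplus\Z_2\oplus\Z_2$, whereas trivializing one factor yields $(\text{Klein bottle})\times\Sph^1$ with $H_1\cong\Z^2\oplus\Z_2$; the first manifold satisfies all hypotheses of the lemma, so the dependent case genuinely occurs and cannot be normalized away. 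The repair is not to force $\alpha$ to be surjective but to drop the independence requirement: take $N$ to be the fiber product over $S$ of the double covers $\Sph(\nu_1),\ldots,\Sph(\nu_\ell)$, i.e.\ the principal $\Z_2^\ell$-bundle classified by $(\alpha_1,\ldots,\alpha_\ell)$, possibly disconnected; the $\Z_2^\ell$-action on it is free by construction, one has $\Sph(\nu_i\oplus\varepsilon)\cong\Sph(\nu_i)\times_{\Z_2}\Sph^1$ with $\Z_2$ acting on $\Sph^1$ by a reflection commuting with the $\W_i$-reflection, and the identification $M\cong N\times_{\Z_2^\ell}\T^\ell\times\T^k$ then goes through with no assumption on the classes $\alpha_i$. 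This is in substance what the paper's proof does by working with the binding itself (a fixed-point \emph{component} realizes $N$ only when the classes are independent), and it also makes the claimed isometric, rather than merely equivariantly diffeomorphic, identification transparent, since all pieces are totally geodesic flat submanifolds.
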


\begin{proof}  It is clear that the intersection of mirrors is a flat manifold. Let $N$ denote a fixed point connected component.
From  \ref{open book}, respectively \ref{iterated open book} we know
that $M$ is a bundle with fiber a sphere respectively a product of
spheres over a soul. Being flat, the soul $S$ must be flat, and the
fiber must be a product of circles. Therefore, $M$ is the fiber
product of $\Bbb S^1$-bundles $\Sph(\nu _i\oplus \varepsilon )$,
where $\nu _i$, $1\leq i \leq k+\ell$, are all real line bundles
over $S$. Assume the first $\ell$ bundles are nontrivial, and
respectively the last $k$ bundles are trivial. In particular, $N$ is
a free $\Bbb Z_2^\ell$ bundle over $S$. It is clear $\W \cong \Bbb
Z_2^{k+\ell}$ acting on $\Bbb T^{k+\ell}$ by componentwise
reflections, commuting with the componentwise $\Bbb Z_2^\ell$ action
on the first $\ell$ factors (different from the $\W$ action on the
component, note that $\Bbb Z_2\times \Bbb Z_2\subset I(\Bbb S^1)$.)
The desired result follows.
\end{proof}

It follows in particular that  the action is indecomposable if and only if the chamber of its associated Coxeter
action is a euclidean simplex. Moreover, by \ref{w-structure} we know that if the Coxeter cover $M'$ of $M$ has chamber a simplex, then $\pi_1(M') \subset \hat{\W}$, and the $\hat{\W}$ chamber in $\tilde{M}$ is a simplex as well. However, as pointed out above, it then follows that the affine Coxeter group is irreducible.

Recall that,  an irreducible affine Coxeter group $\W$ of rank $m$ must be one of types $\tilde \A_m, \tilde \B_m, \tilde \CC_m,\newline \tilde \D_m $, $ \tilde \E_6,  \tilde \E_7, \tilde \E_8, \tilde \F_4, \tilde \G_2$ (cf., e.g., \cite{Bo}), and $\W=\Bbb Z^m\rtimes \W_0$, where $\W_0$ is an irreducible spherical Coxeter group, of type $\A_m$, $\B_m=\CC_m$, $\D_m,  \E_6, \E_7, \E_8, \F_4,  \G_2$. We say that a reflection group $\W$ acting on a flat manifold $M$ is {\it irreducible} if the $\hat \W$ action on $\Bbb R^m$  is irreducible.  With this terminology, we now know that the $\W$ action is indecomposable if and only if it is irreducible, if and only if its Coxeter chamber is a simplex.

\smallskip
Before proving our main result below about irreducible actions, recall that by Bieberbach's celebrated theorem,
a finite cover of $M$ is isometric to a flat torus $\Bbb T^m=\Bbb R^m/\Bbb Z^m$. Note that every isometry of $\Bbb T^m$ lifts to a lattice preserving isometry of $\Bbb R^m$, whose isometry group is $I(\Bbb R^m) \cong \Bbb R^m\rtimes \O(m)$, and vice-versa. Therefore, $I(\Bbb T^m)$ contains $\Bbb T^m$ as a normal subgroup with quotient a finite subgroup of $\O(m)$.

In view of lemma  \ref{flat book} and subsequent comments above, the following in particular completes the proof of Theorem A in the introduction:

\begin{thm}[Torus Theorem]\label{torus} Let $M$ be a compact flat manfiold with an irreducible / indecomposable reflection group action by $\W$. Then

(1) $M$ is a flat torus $\Bbb T^m$.

(2)  The $\W$ action  is Coxeter.

(3) $\W \cong \A \rtimes \W_0$, where $\A$ is a finite abelian group of rank at most $m$, and $\W_0$ is a finite irreducible spherical Coxeter group.
\end{thm}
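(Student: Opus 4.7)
The plan is to first prove the theorem for the Coxeter cover $M'$ of $M$, and then descend to $M$ itself. By \pref{coxeter} combined with the indecomposability hypothesis, the Coxeter chamber $C' = \tilde C$ is a euclidean simplex, and so the lifted reflection group $\hat\W$ on $\tilde M = \R^m$ is an \emph{irreducible} affine Coxeter group; by the classification this means $\hat\W \cong \Z^m \rtimes \W_0$ with $\W_0$ an irreducible finite spherical Coxeter group acting irreducibly on $\R^m$. By \rref{w-structure}, $\pi_1(M')$ is a torsion-free normal subgroup of $\hat\W$, with finite quotient $\W = \hat\W/\pi_1(M')$.

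The heart of the argument is to show $\pi_1(M') \subset \Z^m$, so that $M' = \R^m/\pi_1(M') \cong \T^m$. Set $L := \pi_1(M') \cap \Z^m$ and $G := \pi_1(M')/L$, regarded as a subgroup of $\W_0$ via the rotation-part projection. Normality of $\pi_1(M') \lhd \hat\W$ forces $L$ to be $\W_0$-invariant in $\Z^m$ and $G$ to be a normal subgroup of $\W_0$; cocompactness of $\pi_1(M')$ makes $L$ of full rank. The task is to show $G = \{1\}$. Suppose for contradiction that $1 \ne g \in G$ admits a torsion-free lift $\gamma = (t, g) \in \pi_1(M')$ with $t \in \Z^m$. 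Torsion-freeness is equivalent to $t$ having a nonzero component $t^+$ along the $+1$-eigenspace $V_g^+$ of $g$; this already rules out $V_g^+ = 0$ (and in particular excludes $g = -I$ when $-I \in \W_0$). Conjugating $\gamma$ by translations $(s, 1) \in \Z^m \subset \hat\W$ and subtracting $\gamma$ gives $((1-g)s, 1) \in L$ for every $s \in \Z^m$; combined with the $\W_0$-invariance of $L$ this yields
\begin{equation*}
L_g := \sum_{w \in \W_0} (1 - wgw^{-1})(\Z^m) \;\subset\; L.
\end{equation*}

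The main algebraic obstacle is to establish that $\pi^+(L_g) = \pi^+(\Z^m)$, where $\pi^+ \colon \R^m \to V_g^+$ is orthogonal projection. Granted this, pick $l \in L$ with $\pi^+(l) = -\pi^+(t)$; then $l\gamma = (l + t, g) \in \pi_1(M')$ has vanishing $V_g^+$-component, hence has a fixed point in $\R^m$, hence is torsion --- contradicting the torsion-freeness of $\pi_1(M')$. To prove $\pi^+(L_g) = \pi^+(\Z^m)$, note first that $L_g \otimes \Q = \R^m$, since $L_g \otimes \Q$ is a nonzero $\W_0$-invariant subspace of the irreducible $\Q\W_0$-module $\R^m$; the integral refinement is then verified by an explicit computation with simple roots of $\W_0$, exploiting that $(1 - s_i)\alpha_j = \langle \alpha_j, \alpha_i^\vee\rangle\, \alpha_i$ lies in $\Z\alpha_i$ and that, since the Dynkin diagram of the irreducible $\W_0$ is connected, these Cartan integers generate $\Z$. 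Consequently $G = \{1\}$, $\pi_1(M') = L$, and $M' \cong \T^m$; moreover $\W = \hat\W/L = (\Z^m/L) \rtimes \W_0 = \A \rtimes \W_0$ with $\A := \Z^m/L$ a finite abelian group of rank at most $m$, yielding part (3) for $M'$.

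Finally, to show $M = M'$ (so that parts (1), (2), (3) hold as stated), recall from \pref{coxeter} that $M = M'/\Gamma'$ with $\Gamma' \cong \W_c$ acting freely on $M' = \T^m$ and commuting with $\W$. For any $(u, A) \in \Gamma'$, viewed inside the isometry group $\T^m \rtimes \O(m)$ of $\T^m$, commutation with $\W_0 \subset \W$ forces $u$ to be $\W_0$-fixed in $\R^m/L$ and $A$ to centralize $\W_0$ in $\O(m)$. Irreducibility of $\W_0$ and Schur's lemma then give $u = 0$ and $A = \pm I$. The case $A = -I$ is excluded either by $-I \in \W \cap \Gamma' = \{1\}$ (when $-I \in \W_0$), or by the existence of $2^m$ fixed $2$-torsion points of $-I$ on $\T^m$ contradicting freeness of the $\Gamma'$-action (when $-I \notin \W_0$). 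Thus $\Gamma' = \{1\}$, $M = M'$, the $\W$-action on $M$ is Coxeter, and the theorem follows.
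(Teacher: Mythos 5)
Your overall plan (pass to the Coxeter cover, show the torsion\-/free normal subgroup $\pi_1(M')\lhd\hat\W=\Z^m\rtimes\W_0$ lies in the translation lattice, then kill the deck group $\Gamma'$) runs parallel to the paper's, but the two places where you supply your own arguments each contain a genuine gap.

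First, your central algebraic claim $\pi^+(L_g)=\pi^+(\Z^m)$ is false in general. In the irreducible affine Coxeter group of type $\tilde C_2$, realized as $\Z^2\rtimes\W_0$ with $\W_0$ the order-$8$ group of signed coordinate permutations, take $g(x,y)=(-x,y)$. Its only $\W_0$-conjugates are $g$ and $g'(x,y)=(x,-y)$, and $(1-g)\Z^2=2\Z\times\{0\}$, $(1-g')\Z^2=\{0\}\times 2\Z$, so $L_g=2\Z\times 2\Z$ and $\pi^+(L_g)=\{0\}\times 2\Z$ has index two in $\pi^+(\Z^2)=\{0\}\times\Z$. Hence for such a $g$ your contradiction cannot be manufactured, and the sketched justification (``Cartan integers generate $\Z$'') cannot repair it: it ignores that the translation lattice is the coroot lattice and fails exactly in the non--simply-laced types. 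The fact you are trying to prove --- a torsion-free normal subgroup of an irreducible affine Coxeter group is contained in the lattice --- is precisely what the paper takes as known (cf.\ its remark in Section~1 on normal subgroups of affine Coxeter groups) before deducing (3); a correct proof must use normality of the image $G\lhd\W_0$ beyond forming $L_g$. For instance, in the example above the normal closure of $g$ contains $-\Id$, every lift of which is $2$-torsion, so the contradiction comes from a different element of $G$, not from the $g$ you fixed.

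Second, the step ``Schur's lemma gives $u=0$'' is wrong. Commutation with $\W_0$ only forces $(1-w)u\in L$ for all $w\in\W_0$, i.e.\ $u$ lies in the fixed-point set of $\W_0$ on $\R^m/L$; this is a finite group that is typically nontrivial (for $L$ a coroot lattice it is the coweight-mod-coroot group, e.g.\ $\Z_3$ in type $\tilde A_2$), and translation by such a $u$ is a free isometry of $\R^m/L$ commuting with all of $\W$. So your argument does not exclude $\Gamma'\neq 1$, and therefore establishes neither (2) nor, the way you have organized it, (1) and (3) for $M$ itself rather than for $M'$. This is not a cosmetic issue: such commuting free translation groups genuinely exist (e.g.\ the weight-mod-root group acting on $\R^2/Q$ for $\tilde A_2$), so ruling them out is exactly the content of part (2) and needs a different mechanism. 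The paper proceeds differently at both points: it proves (1) by showing the holonomy $\G$ of $M$ commutes with $\W_0$, so $\G\subset\{\pm\Id\}$ by Schur, and excludes $-\Id$ because an extension of $\Z^m$ by $\Z_2$ with monodromy $-\Id$ splits, contradicting torsion-freeness of $\pi_1$; and it proves (2) by identifying $\W_c$ with a chamber isotropy group of $\tilde\W$ acting on $\R^m$, not by a pointwise commutation argument on the torus. You would need to replace both of your key steps by arguments of this kind (or by a citation for the structure of normal subgroups of irreducible affine Coxeter groups) for the proof to go through.
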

\begin{proof}
By  Bieberbach's theorem, $M=\Bbb T^m/\G$  where $\G\subset \O(m)$ is the holonomy.  Note that $\G$ preserves the lattice $\Bbb Z^m\subset \Bbb R^m$, hence
$\G$ is also a finite subgroup of $\GL(\Bbb Z, m)$.

By section 1, $\W$ lifts to a reflection group $\hat \W\subset \tilde \W \subset I(\Bbb R^m)=\Bbb R^m\rtimes \O(m)$ such that $\tilde \W/\pi_1=\hat \W/(\pi_1\cap \hat \W)=\W$. Recall that $\hat \W=\Bbb Z^m\rtimes \W_0$, where $\W_0$ is a maximal finite subgroup of $\hat \W$, a spherical Coxeter group.
 Since $\pi_1$ is a torsion free group,  $\pi_1\cap \hat \W$ is a torsion free normal subgroup of $\hat \W$, and hence $\pi_1\cap \hat \W\subset \Bbb Z^m$ is a sublattice. In particular,
  the split epimorphism  $\hat \W\to \W_0$ induces a split epimorphism $\W =\hat \W/(\pi_1\cap \hat \W)\to \W_0$ with kernel, $\A$, a  quotient of the sublattice in $\Bbb Z^m$. Hence (3) follows.

Now we prove (1), i.e., $\G$ is trivial. Recall that $\pi_1$ is a normal extension of $\Bbb Z^m$ by $\G$. Hence the holonomy homomorphism gives an epimorphism from $\Gamma = \pi_1/\pi_1\cap \hat \W$ onto $\G\subset \O(m)$. By Corollary 1.2, $\Gamma \times \W$ acts on a flat covering space $\hat M$ of $M$, hence $\G$ commutes with $\W_0$, the image of $\hat \W$ in $\O(m)$. In particular,
every  $g\in \G$ commutes with every $\w\in \W_0 \subset \O(m) $. Therefore, the linear irreducible  Coxeter  $\W_0$  action commutes with the linear $\G$-action on $\Bbb S^{m-1}$. It follows that $\G\subset \Bbb Z_2=\langle \pm \Bbb I\rangle $, generated by the antipodal map.  If $\G=\Bbb Z_2$, then $\pi_1$ is a normal extension of
$\Bbb Z^m$ by $\Bbb Z_2$ with monodromy $-\Bbb I $. Such an extension always splits, contradicting the fact that $\pi_1$ is torsion free.

Given (1), $\tilde \W$ is an extension of $\W$ by $\Bbb Z^m$, hence, by (3), a split extension over $\W_0$ with kernel a subgroup of translations of $\Bbb R^m$. If the $\W$ action is not Coxeter, then
$\W_c$ is isomorphic to a finite subgroup of $\tilde \W$ (in fact, isomorphic to a chamber isotropy group of $\tilde \W$ on $\Bbb R^m$), hence a subgroup of a conjugate of $\W_0$ in $\tilde \W$.
Therefore, $\W_c$ is trivial, since the $\W_0$ action on $\Bbb R^m$ is Coxeter. The desired result follows.
\end{proof}

The proof above, in fact also yields the somewhat surprising statement, that if the Coxeter chamber $C'$ only contains simplex factors in its decomposition \ref{base split}, then in particular it is Coxeter (cf. \ref{noncoxeter}). Precisely we have:


\begin{cor}\label{holonomy}
Let $M$ be a compact flat manifold with a reducible reflection group
$\W$, where $\hat \W=\hat \W_1 \times \cdots \times \hat \W_k$ such that $\hat \W_i$ is
irreducible. If the chamber $C'$ is a direct product of
euclidean simplices $\Delta_1\times \cdots \times \Delta _k$, then $M=\Bbb
T^m/\G$, where the holonomy group $\G\subset \Bbb Z_2\times \cdots
\times \Bbb Z_2$ a subgroup of $\GL(\Bbb Z,m)$ consists of block
matrices with $i$-th block $\pm \Bbb I$. Moreover, the $\W$ action  is Coxeter.
\end{cor}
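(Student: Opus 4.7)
The plan is to parallel the proof of the Torus Theorem \ref{torus} factor by factor. By Bieberbach's theorem, write $M=\Bbb R^m/\pi_1=\Bbb T^m/\G$ with translation lattice $L$ and holonomy $\G\subseteq \O(m)\cap \GL(\Bbb Z,m)$. On the universal cover $\tilde M=\Bbb R^m$ the lifted reflection group decomposes as $\hat \W = \hat \W_1\times\cdots\times\hat \W_k = \Lambda\rtimes \W_0$, where $\Lambda=\bigoplus \Lambda_i$ and $\W_0=\prod (\W_0)_i$ with each $(\W_0)_i$ an irreducible spherical Coxeter group acting irreducibly on $\Bbb R^{m_i}$. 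By Remark \ref{w-structure}, the Coxeter cover is $M'=\hat M=\Bbb R^m/N$ with $N=\hat \W\cap \pi_1$, and the deck group $\Gamma=\pi_1/N\cong \W_c$ commutes with $\W$ on $\hat M$ via Corollary 1.2.

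The central technical step is to establish $N\subseteq \Lambda$, extending the irreducible conclusion of \ref{torus} to the product setting. For any $n=(n_1,\ldots,n_k)\in N$, conjugation by $w_i\in \hat \W_i$ embedded in the $i$-th factor gives $[w_i,n]=(1,\ldots,[w_i,n_i],\ldots,1)\in N\cap \hat \W_i$, and this intersection is a torsion-free normal subgroup of the irreducible affine Coxeter group $\hat \W_i$; by the Bieberbach-type argument used in the proof of \ref{torus}, it is a sublattice of $\Lambda_i$. This forces $[\bar w_i,\bar n_i]=1$ for every $w_i$, so $\bar n_i$ lies in the center of $(\W_0)_i$, hence $\bar n_i\in\{1,-I_{m_i}\}$. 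Since $n_i=(t_i,-I_{m_i})$ has order two in $\hat \W_i$, any configuration of non-trivial $\bar n_i$ forces $n^2$ to be a translation with vanishing contribution from the central-involution factors; together with torsion-freeness of $N$ and normality under the full product $\hat \W$, this leaves $\bar n=1$ as the only possibility, giving $n\in \Lambda$ as claimed.

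With $N\subseteq \Lambda$ in hand, the commutator $[w,\gamma]\in N$ for $w\in \hat \W$ and $\gamma\in \pi_1$ has trivial linear part, so $[\bar w,\bar \gamma]=1$ in $\O(m)$ and $\G$ centralizes $\W_0$. Applying Schur's lemma to each irreducible action of $(\W_0)_i$ on $\Bbb R^{m_i}$, every $g\in \G$ preserves the decomposition $\Bbb R^m=\bigoplus \Bbb R^{m_i}$ and restricts to $\pm I_{m_i}$ on each summand. Hence $\G\subseteq \{\pm I\}^k\subseteq \GL(\Bbb Z,m)$ consists of block matrices with $i$-th block $\pm I_{m_i}$, matching the claimed structure.

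For the Coxeter assertion, I would mirror the concluding step of the proof of Theorem \ref{torus}. Since $\G$ and $\W_0$ commute and $\G$ is block-diagonal $\pm I$, the crystallographic group $\tilde \W = \hat \W\cdot \pi_1$ has linear part $\W_0\cdot \G$ with commuting factors, and any chamber stabilizer $\tilde \W_{\tilde C}$ is a finite subgroup fixing (by a Brouwer-type argument) a point of the compact convex chamber $\tilde C = \prod \Delta_i$. Simple transitivity of $\hat \W$ on its chambers gives $\tilde \W_{\tilde C}\cap \hat \W = 1$, and combined with centralization this stabilizer embeds into a subgroup of a conjugate of $\W_0$, whose induced action on $\tilde C$ is already Coxeter at each simplex factor; this forces $\tilde \W_{\tilde C}=1$ and hence $\W_c\cong \Gamma=1$. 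The main obstacle is securing the structural lemma $N\subseteq \Lambda$: ruling out the ``mixed'' torsion-free elements that combine central involutions $-I_{m_i}$ on some factors with non-trivial translations on others requires careful bookkeeping across the irreducible components, using only normality under the full $\hat \W$ and torsion-freeness of $N$.
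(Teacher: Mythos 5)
Your argument hinges entirely on the ``central technical step'' $N=\pi_1\cap\hat\W\subseteq\Lambda$, and that step is false: the ``mixed'' elements you try to exclude at the end genuinely occur, already in the paper's simplest example. Take $M$ the Klein bottle with $\pi_1=\langle a,b\rangle$ acting on $\R^2$ by $a(x,y)=(x,y+1)$ and $b(x,y)=(x+\tfrac12,-y)$, and let $\hat\W=\hat\W_1\times\hat\W_2$, where $\hat\W_1$ is generated by the reflections in the lines $x=0$ and $x=\tfrac14$ (so its translation lattice is $\tfrac12\Z$) and $\hat\W_2$ by the reflections in $y=0$ and $y=\tfrac12$. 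One checks that $\pi_1$ and $\hat\W$ normalize each other, that the induced action on $M$ is a reflection group satisfying every hypothesis of the corollary (the chamber is $[0,\tfrac14]\times[0,\tfrac12]$, a product of two euclidean $1$-simplices, and the action is even Coxeter), and that in fact $\pi_1\subseteq\hat\W$, so $N=\pi_1$ contains $b$, whose linear part is $\diag(1,-1)\neq I$. Your per-factor Schur argument is correct as far as it goes: it shows each $\bar n_i\in\{1,-I_{m_i}\}$, i.e.\ linear parts of elements of $N$ are blockwise $\pm I$. But the next sentence, claiming that torsion-freeness and normality then force $\bar n=1$, is exactly where the proof breaks: for $n=b$ one has $\bar n_1=1$, $\bar n_2=-1$, and $n^2$ is the nontrivial translation $(x,y)\mapsto(x+1,y)$ contributed by the block where the linear part is trivial, so no torsion arises and no contradiction with normality appears. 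Such elements cannot be ruled out; indeed they are precisely what produces the nontrivial block-$\pm I$ holonomy that distinguishes this corollary from Theorem \ref{torus}.

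Because the centralization of $\W_0$ by $\G$ is deduced in your write-up only from $N\subseteq\Lambda$ (via ``$[w,\gamma]\in N$ has trivial linear part''), the block-$\pm I$ description of $\G$ and the concluding Coxeter argument are left without support once the lemma fails; note also that in the example above $\Gamma=\pi_1/N$ is trivial while $\G=\Z_2$, so one cannot simply route the holonomy through $\Gamma$ either. The paper treats the corollary as a byproduct of the proof of Theorem \ref{torus}: the commutation needed before applying Schur blockwise is obtained there from Corollary 1.2 (the commuting $\Gamma\times\W$ action on the cover $\hat M$), with the torsion-freeness step of \ref{torus} now only eliminating holonomy blocks factor by factor rather than eliminating $\G$ outright, and the Coxeter claim extracted from the same chamber-isotropy argument. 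Your blockwise Schur computation for elements of $N$ is a correct and useful fragment, but to complete the proof you must replace the false containment $N\subseteq\Lambda$ by an argument that controls the linear parts of all of $\pi_1$ (not just of $N$) relative to the splitting $\R^m=\bigoplus\R^{m_i}$ determined by $\hat\W$.
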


It is easy to see that, if $M$ is as in the above corollary, then it is an iterated torus bundles with structure group $\Bbb Z_2$.

The Klein bottle is the  simplest example of the above type. Specifically, we have:

\begin{rem}
Any reflection group $\W$ on a Klein bottle $\Bbb K$ is reducible. Moreover, if $C'$ is a product of intervals, then  $\W\cong    \D_{2k}\times  {\Bbb Z_2}$,  or $\D_{2k} \times \Bbb Z_2 ^2$ for $k$ odd, or $\D_{2k}\times _{\Bbb Z_2} \Bbb Z^2_2 $ for $k$ even,   where $\D_{2k}$ is the dihedral group of order $2k$, $\Bbb Z_2$ is the center of $\D_{2k}$ in the balanced product.
\end{rem}
The first assertion follows immediately from 4.5.  Note that $\Bbb K$  is the quotient $\Bbb T^2/\langle \gamma \rangle$, where the involution $\gamma $ is given by $(x, y)\mapsto (-x, \bar y)$, with $x, y\in \Bbb S^1 \subset \C$ unit complex numbers.
From 4.6 we know that $\W $ is the quotient of the product of reflection groups on $\Bbb R^1$, hence, from 4.5 (3),  the quotient of the product of two dihedral groups $\D_{2k}\times \D_{2l}$ acting componentwise on $\Bbb S^1\times \Bbb S^1$, for some $k, l \ge 1$.  Moreover, the reflection group  $\D_{2k}\times \D_{2l}$ commutes with the deck involution $\gamma$, i.e., $w\gamma w^{-1}=\gamma$ for any $w\in \D_{2k}\times \D_{2l}$. Therefore, $l=1$ or $2$. If $k$ is even, the center of $\D_{2k}$ is $\Bbb Z_2$ generated by the antipodal map on $\Bbb S^1$, hence $\gamma \in \D_{2k}\times \D_4$.  By the assumption on $C'$ we know that, if $l=1$, then $\D_2$ is not the complex conjugation on $\Bbb S^1$. From the fact that the quotient of a dihedral group is again a dihedral group the second assertion follows.

\medskip

\section{Universal cover and group decomposition}
\medskip

Our objective in this section is to prove Theorem C and Corollary D in the introduction.

To do this assume without loss of generality that the co-compact $\W$ action on $M$ is Coxeter with chambers, $C = M/\W$. Based on the previous sections and \ref{base split} we have a metric decomposition of the form

\begin{equation}
 C =\displaystyle \prod _{i=1}^r  \Delta^e_i \times  \displaystyle \prod _{j=1}^{\ell -1} \Delta^s_{j} \times \Bbb V_\ell \times N
\end{equation}

\no where $N$ is a closed non-negatively curved manifold without boundary (possibly a point), the $\Delta^e_i $ are euclidean simplexes (including intervals), $\Delta^s_j $ are spherical simplices, and $\Bbb V$ is a (iterated) \emph{book chamber}.

We start with a simple observation

\begin{lem} [Trivial factor]\label{factor}The above $M$ is isometric to $\bar M\times N$ where $\W$ acts trivially on $N$, and $\bar M$ is a non-negatively curved Coxeter  $\W$-manifold with orbit space  as above without the $N$ factor.
\end{lem}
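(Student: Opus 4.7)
The plan is to use the metric product structure of the chamber $C = C^\sharp \times N$, with $C^\sharp := \prod_i \Delta^e_i \times \prod_j \Delta^s_j \times \mathbb{V}_\ell$, and propagate it via the $\W$-action to a global splitting $M = \bar M \times N$ on which $\W$ acts trivially on the $N$-factor.

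First, I would observe that since $N$ has no boundary, every face $F_i$ of $C$ has the form $F_i = F_i^\sharp \times N$ for $F_i^\sharp$ a face of $C^\sharp$. Consequently, the distribution $TN$ arising from the product structure on the open chamber $c$ satisfies $TN \subset TF_i$ along each face. Since the reflection $r_i$ across the mirror containing $F_i$ fixes $F_i$ pointwise, $dr_i$ is the identity on $TF_i$, hence on $TN$. Defining $\mathcal{V}$ on $M$ by $\mathcal{V}|_{w(c)} := dw(TN)$ for each $w \in \W$ (well defined by the Coxeter hypothesis, since $\W$ acts simply transitively on open chambers), the matching condition above shows that $\mathcal{V}$ extends smoothly across every mirror to a globally defined, $\W$-invariant parallel distribution; its orthogonal complement $\mathcal{V}^\perp$ is then also parallel, giving two complementary totally geodesic foliations of $M$, with $\mathcal{V}$-leaves isometric to $N$ (hence compact).

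Next, I would realize the two foliations geometrically. Fixing $n_0 \in N$, the set $\bar M_0 := C^\sharp \times \{n_0\}$ is a totally geodesic convex subset of $C$ with faces $F_i^\sharp \times \{n_0\}$; at each such face, $\bar M_0$ and its reflection $r_i(\bar M_0)$ meet tangentially (since $r_i$ fixes the face pointwise and negates the one-dimensional direction in $TC^\sharp$ normal to it). Therefore $\bar M := \W \cdot \bar M_0$ is a smooth, connected, totally geodesic, $\W$-invariant submanifold of $M$, an integral leaf of $\mathcal{V}^\perp$, on which the induced $\W$-action is Coxeter with chamber $\bar M_0 \cong C^\sharp$. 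Finally, I would construct the isometry $\Phi: \bar M \times N \to M$ using parallel transport along $\mathcal{V}^\perp$ applied to the vertical $N^* := \{q_0\} \times N$ direction at some interior base point $q_0 \in C^\sharp$; by parallelism of both distributions, $\Phi$ is a local isometry, surjective because $M = \W \cdot C$, and injective because each compact $\mathcal{V}$-leaf meets $\bar M$ in exactly one point (checked within a single chamber from the product decomposition of $C$, and propagated by $\W$-equivariance). Triviality of the $\W$-action on $N$ then follows: each reflection $r_i$ fixes pointwise the $\mathcal{V}$-leaf $\{q\} \times N$ for any $q \in F_i^\sharp$ (since it fixes $F_i$), and by isometry of the product structure this forces $r_i$ to act as the identity on every $\mathcal{V}$-leaf, hence on the $N$-factor.

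The hardest part will be establishing that $\Phi$ is a global isometry rather than merely a local one. Locally near any chamber the product structure is manifest from $C = C^\sharp \times N$, and near a mirror it is manifest from the reflection argument, but globally one needs to know that the two complementary totally geodesic foliations admit the consistent global transversal $\bar M$ obtained by reflecting $\bar M_0$, and that the resulting $\mathcal{V}$-foliation does not spiral or re-enter itself away from the chamber decomposition. The two inputs that make the global product emerge from the local one are the compactness of the $N$-leaves (inherited from $N$) and the fact that the Coxeter chambers tile $M$ without identifications.
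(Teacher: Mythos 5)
Your proposal is correct and follows essentially the same route as the paper: the paper gets the two complementary integrable, totally geodesic distributions from the composition of submetries $M \to M/\W \to N$ and then defines the equivariant identification $f: M \to \bar M \times N$ by matching each chamber $wC$ with $\bar C \times N$, which is exactly your propagation of the chamber product structure (reflections fix the faces $F_i^\sharp \times N$, hence the $N$-directions, pointwise), with your $\bar M = \W\cdot\bigl(C^\sharp \times \{n_0\}\bigr)$ being precisely the fiber of that submetry over $n_0$. Your additional care about smooth matching of the distribution across mirrors and injectivity via the compact $N$-leaves simply fills in details the paper dismisses as clear.
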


\begin{proof} Consider the composition of  submetries $p: M\to M/\W\to N$. This yields a horizontal and vertical splitting of the tangent bundle of $M$, both of which are integrable and totally geodesic. Clearly the fiber $\bar M$ supports an induced $\W$-action, with chamber $\bar C= \displaystyle \prod _{i=1}^r  \Delta^e_i \times  \displaystyle \prod _{j=1}^{\ell -1} \Delta^s_{j} \times  \Bbb V_\ell$. Using the decomposition (5.1) we can define an equivariant map $f: M \to \bar M\times N$ by identifying a chamber $C$ with $\bar C\times N$, a chamber for the product $\W$-action on $\bar M \times N$, where $\W$ acts trivially on $N$. It is clear that $f$ is a diffeomorphism which restricts to an isometry on every chamber $wC$, for any $w\in \W$.  The  desired result follows.
\end{proof}

The following shows that (5.2) does not hold unless the action is Coxeter.

\begin{example}\label{noncoxeter}
Consider the product action on $\Bbb S^m\times \Bbb S^n$ of a linear
irreducible Coxeter $\W$ action on $\Bbb S^m$ and the trivial action on $\Sph^n$.
Let $\Bbb S^m\times _{\Bbb Z_2} \Bbb S^n$ be the orbit space of the free diagonal antipodal involution. Then the induces $\W$-action on $\Bbb S^m\times _{\Bbb Z_2} \Bbb S^n$is not Coxeter. A chamber $C$ is isometric to $\Delta \times \Bbb S^n$, but the chamber isotropy group $\W_c=\Bbb Z_2$ acts freely on the product with orbit space $\Delta \times \Bbb P^n$.

Note, that this example may be modified by replacing the antipodal map on the $\Sph^n$ factor by any isometric involution $a$. In particular, if we take $n=m=1$, $\W = \A_2$ and $a=r$ a reflection, we get a non-Coxter action on the Klein bottle, with chamber, $\Sph^1 \times \Delta^1$ and orbit space, an ``open envelope", i.e, the double of a flat rectangle, leaving one side open (cf. \ref{holonomy}).
\end{example}

By Lemma 5.2 we now assume $N$ is a point.
Note, that faces of $C$ are products of all factors but one, with faces of the remaining factor. Moreover, each such set $i$ of such faces, generate a reflection group $\W_i$ any two of which commute.

\begin{proof}[Proof of Theorem C]
Let us first consider the case where $\pi_1(M)$ is infinite. Then
by the Cheeger-Gromoll splitting theorem,
the universal cover $\tilde M$ is isometric to the product $\Bbb R^k \times N$, where $N$ is a compact simply connected manifold. Clearly, the chamber $\tilde{C}$ for the lifted $\hat{\W}$ action is a product of euclidean simplices with a chamber $C_N$ in $N$, and $\hat{\W} = \hat{\W}_0 \times \hat{\W}_N$, where $ \hat{\W}_0$ is an affine Coxeter group, and $\hat{\W}_N$ is a finite Coxeter group.

In particular, it remains to prove the claim when $\pi_1(M)$ is finite. Thus it suffices to consider that case where $M$ is compact and simply connected. In this case, there are no euclidean simplices in the splitting of $\tilde C$, and an open book chamber is simply connected as well. The splitting of the tiles, by equivariance, obviously gives rise to a local hence global splitting of $M$ into factors consisting of spheres and an open (iterated) book as claimed, with corresponding actions of Coxeter groups.
\end{proof}

\begin{proof}[Proof of Corollary D] By Theorem C, passing to the universal cover, $\tilde M$, the lifted reflection group $\hat \W$ is a product $\hat \W_0\times \hat \W_1\times \cdots \times \hat \W_\ell$, where $\hat \W_0$ is an affine Coxeter group,  $\hat \W_j$, $1\leq j \leq \ell$, are finite spherical Coxeter groups.   Note that $\W=\hat \W/\N$, where $\N$ is a normal subgroup in $\hat \W$ acting freely on $\tilde M$, as a subgroup of the deck transformations. It suffices to prove that $\N$ is abelian. Note, that $\N$  clearly projects to a  normal subgroup $p_j(\N)\subset \hat \W_j$, and moreover, $\N$ is contained in the product of $p_0(\N)\times \cdots \times p_\ell(\N)$. Hence it remains only to show that $p_j(\N)$ is abelian.

Note that $p_j(\N)$ acts freely on the $j$-th factor. Therefore, $p_0(\N)\subset \hat \W_0\cong \Bbb Z^m\rtimes \W_0$  is contained in the torsion free lattice (cf. Theorem \ref{torus}).  A spherical factor $\hat \W_j$ of rank $2$, must come from either an open book factor or a factor acting linearly on a sphere of dimension at least $2$. In either case $\hat \W_j$ has a fixed point, and hence, $p_j(\N)$ must be trivial. Finally, from the well-known fact that a normal subgroup of an irreducible spherical Coxeter group of rank at least $3$ is contained in its center (trivial or $\Bbb Z_2$) the desired result follows.

Conversely, for an abelian normal subgroup $\N \cong \Bbb Z^p \times \Bbb Z_2^q\lhd \hat \W_0\times \hat \W_1\times \cdots \times \hat \W_\ell$, where $\Bbb Z_2^q$ is in the center of the product of spherical Coxeter groups, which acts freely on the product of spheres $\Bbb S^{k_1}\times \cdots \times \Sph^{k_\ell}$, as a sub-action of the product of the antipodal maps. Therefore $\N$ acts freely on the product $\Bbb R^k \times \Sph^{k_1}\times \cdots \times \Sph^{k_\ell}$, and $\W$ acts as reflection groups on the quotient space, a manifold with non-negative curvature. The proof is now complete.
\end{proof}

\bigskip


\begin{thebibliography}{10}

\bibitem{AKLM}
D.~Alekseevsky, A. Kriegel, M. Losik, and P.W.~Michor,
\emph{Reflection groups on Riemannian manifolds}, Annali di Matematica \textbf{186} (2007), 25-58.


\bibitem{An}  E. M. ~Andreev, \emph{
Convex polyhedra in Loba\v{c}evski\u{\i}\ spaces}  (Russian),
Mat. Sb. (N.S.) \textbf{81 (123)} (1970), 445-478.

\bibitem{AT}
M. ~Alexandrino and D. ~T\"{o}ben, \emph{Singular Riemannian foliations on simply connected spaces}, Differential
Geom. Appl. \textbf{24} (2006), 383-397.

\bibitem{Bo} N. Bourbaki, {\it \'El\'ements de math\'ematique. Fasc. XXXIV.
Groupes et alg\`ebres de  Lie. Chapitres 4, 5 et 6}.
  Hermann, Paris  1968.




\bibitem{CG}  J. Cheeger and D. Gromoll, \emph{
On the structure of complete manifolds of nonnegative curvature},
Ann. of Math. \textbf{96} (1972), 413-443.


\bibitem{CG1}  J. Cheeger and D. Gromoll, \emph{
The splitting theorem for manifolds of nonnegative Ricci curvature}, J. Differential Geometry, \textbf{6} (1971), 119-128.

\bibitem{Co}
 H.S.M.
Coxeter, \emph{Discrete groups generated by reflections}, Annals of
Math. \textbf{35} (1934), 588-621.

\bibitem{Da}   Michael W. Davis, \emph{The geometry and topology of Coxeter groups}, London Mathematical Society Monographs Series, \textbf{32}. Princeton University Press, Princeton, NJ, 2008.

\bibitem{Dy} W. von Dyck, \emph{Gruppentheoretische Studien}, Math. Ann. \textbf{20} (1882), 1-44.

\bibitem{FGT} F. Fang, K. Grove, G. Thorbergsson, \emph{Tits
geometry and positive curvature}, arXiv:1205.6222.


\bibitem{GK} K. Grove and  C.-W. Kim,  \emph{Positively curved manifolds with low fixed point cohomogeneity}, J. Diff. Geom. \textbf{67} (2004), 1-33.

\bibitem{GZ} K. Grove and W. Ziller,
\emph{Polar manifolds and actions}, J. Fixed Point Theory Appl. \textbf{11} (2012), 279-313.

\bibitem{Gu} L. Guijarro, \emph{Improving the metric in an open manifold with nonnegative curvature},  Proc. Amer. Math. Soc.,
\textbf{126}(1998),  1541-1545


\bibitem{Pe}
A. Petrunin, \emph{Semiconcave functions in Alexandrov's geometry. Surveys in differential geometry}, Vol. XI, 137?01, Surv. Differ. Geom., 11, Int. Press, Somerville, MA, 2007.

\bibitem{Po1}
H. Poincar\'e, \emph{Th\'eorie des groupes fuchsiens,} Acta Math. \textbf{1} (1882), 1-62.



\bibitem{Ro}
R. K. W.  Roeder, J. H. Hubbard,  W.D. Dunbar, \emph{Andreev's theorem on hyperbolic polyhedra},  Ann. Inst. Fourier (Grenoble) \textbf{57} (2007), 825-882.

\bibitem{Sh} V. A. Sharafutdinov,  \emph{Convex sets in a manifold of nonnegative curvature},  (Russian) Mat. Zametki \textbf{26} (1979), 129-136.


\bibitem{To} V.A. Toponogov, \emph{The metric structure of Riemannian spaces of non-negative curvature containing straight lines} (Russian), Sibirsk. Mat. \"Z. \textbf{5} (1964),  1358-1369.

\bibitem{Vi} E. B. Vinberg, \emph{
Discrete reflection groups in Lobachevsky spaces}, Proceedings of the International Congress of Mathematicians, Vol. 1, 2 (Warsaw, 1983), 593-601, Warsaw, 1984.

\bibitem{Wo} A. W\"{o}rner,
 \emph{A splitting theorem for nonnegatively curved Alexandrov spaces}, Geom. Topol. \textbf{16} (2012), 2391--2426. 

 \bibitem{Wi} B. Wilking,
 \emph{Positively curved manifolds with symmetry}, Ann. of Math. (2) \textbf{163} (2006), no. 2, 607-668.

 \bibitem{Yi1} Jin-Whan Yim,
 \emph{Distance nonincreasing retraction on a complete open manifold of nonnegative sectional curvature}, Ann. Global. Anal. Geom. \textbf{6} (1988), 191-206.

\bibitem{Yi2} Jin-Whan Yim,
\emph{Space of souls in a complete open Riemannian manifold of nonnegative
curvature},  J. Diff. Geom. \textbf{32}(1990) 429-455

\end{thebibliography}
\end{document}